\newtheorem{theorem}{Theorem}[section]
\newtheorem{lemma}[theorem]{Lemma}
\newtheorem{corollary}[theorem]{Corollary}
\newtheorem{proposition}[theorem]{Proposition}
\theoremstyle{definition}
\newtheorem{definition}[theorem]{Definition}
\newtheorem{example}[theorem]{Example}
\numberwithin{equation}{section}
\newcommand*{\sheafhom}{\mathcal{H}\kern -.5pt om}
\newcommand*{\sheafext}{\mathcal{E}\kern -.5pt xt}
\title{Equivalences of derived categories of sheaves on tame stacks}
\author{Fei Peng}
\email{pengf2@student.unimelb.edu.au}
\address{School of Mathematics \& Statistics, The University of Melbourne, Parkville, VIC, 3010, Australia}
\date{May 30, 2024}
\subjclass[2020]{Primary 14F06, 14F08; Secondary 14A20.}
\keywords{Derived categories, algebraic stacks, Fourier--Mukai transforms.}
\begin{document}

\begin{abstract}
    Building on Olander's work on algebraic spaces, we prove Orlov's representability theorem relating fully faithful functors and Fourier--Mukai transforms between the bounded derived category of coherent sheaves to the case of smooth, proper, and tame algebraic stacks. This extends previous results of Kawamata for Deligne-Mumford stacks with generically trivial stabilizers and projective coarse moduli spaces.
\end{abstract}

\maketitle
\allowdisplaybreaks
\pagenumbering{arabic}

\section{Introduction}\label{1}

Let $X$ and $Y$ be two smooth projective varieties over a field $k$. Let $F\colon D^{b}_{coh}(X)\to D^{b}_{coh}(Y)$ be an exact $k$-linear functor. We say that $F$ is a {\it Fourier--Mukai transform} if there exists a bounded complex of coherent sheaves $K$ on $X\times_{k}Y$ such that $F\cong\Phi_{K}^{X\to Y}$, where $\Phi_{K}^{X\to Y}$ is given by$$Rp_{2,*}(Lp_{1}^{*}(-)\otimes_{\mathcal{O}_{X\times_{k}Y}}^{\mathbb{L}}K).$$We say $K$ is a {\it Fourier--Mukai kernel} of $F$.

A fundamental theorem of Orlov \cite[Theorem\ 2.2]{orlov97} states that if $F$ is a fully faithful functor that admits both a left and a right adjoint, then it is a Fourier--Mukai transform. Moreover, its Fourier--Mukai kernel is unique up to isomorphism. Note that the assumption about adjoints is now known to be superfluous by \cite[Theorem\ 1.3]{BvdB} and Serre duality. See \cite[Remark\ 2.1]{CS07} for the details.

Because of its importance and elegance, multiple attempts have been made to relax the assumptions in Orlov's theorem. Orlov and Lunts established a variant for perfect complexes on projective varieties using techniques from dg-categories \cite{orlov-lunts}. Around the same time, Ballard independently discovered this result in his thesis \cite{Ballard09}. His method can also be used to establish several related but weaker results for quasi-projective schemes. In terms of stacks, the only result in this direction was due to Kawamata, who established the theorem for smooth separated irreducible Deligne-Mumford stacks over the complex numbers with generically trivial stabilizers and projective coarse moduli spaces \cite[Theorem\ 1.1]{kawamata}. Using a natural combination of the methods in \cite{kawamata} and \cite{Ballard09}, one could generalize Kawamata's theorem to all smooth separated tame irreducible Deligne-Mumford stacks over a field with projective coarse moduli spaces. Note that all algebraic stacks with finite diagonal in characteristic 0 are tame and Deligne--Mumford. See \cite[Definition\ 3.1]{tamestacks08}. It is also possible to obtain a twisted variant of Orlov's theorem using the approach in \cite{CS07} in the above situation. Recently, Olander proved Orlov's theorem for smooth proper varieties in \cite{Ola24}, and smooth proper algebraic spaces over a field in his thesis \cite{olander_2022}. His method is interesting because it does not use an ample line bundle or the resolution property, which is essential in all the existing works.

The purpose of this note is to establish Orlov's theorem for any smooth proper tame algebraic stack over a field. We expect this to be folklore to experts but were unable to find it in the literature. More precisely, we prove the following result.
\begin{theorem}\label{1.1}
Let $\mathcal{X},\mathcal{Y}$ be two smooth proper tame algebraic stacks over a field $k$. If $F\colon D^{b}_{coh}(\mathcal{X})\to D^{b}_{coh}(\mathcal{Y})$ is a $k$-linear exact fully faithful functor, then it is a Fourier--Mukai transform. Moreover, the Fourier--Mukai kernel of $F$ is unique up to quasi-isomorphism.
\end{theorem}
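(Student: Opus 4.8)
The plan is to follow Olander's strategy \cite{Ola24,olander_2022}, which replaces the ample line bundle and resolution of the diagonal used by Lunts--Orlov \cite{orlov-lunts} and Ballard \cite{Ballard09} with the Morita theory of smooth and proper differential graded (DG) categories, and then to supply the inputs this strategy requires over a tame stack. First I would record the homological consequences of the hypotheses. A smooth proper tame stack $\mathcal{X}$ over $k$ is separated with finite diagonal, hence concentrated, so by Hall--Rydh \cite{HR17} $D_{qc}(\mathcal{X})$ is compactly generated with compact objects exactly the perfect complexes, and regularity gives $\mathrm{Perf}(\mathcal{X})=D^{b}_{coh}(\mathcal{X})$; properness together with tameness (finite cohomological dimension and coherence of pushforward, via the finite coarse moduli morphism and duality for tame stacks) shows that $D^{b}_{coh}(\mathcal{X})$ is $\mathrm{Hom}$-finite over $k$, idempotent complete, and admits a Serre functor $(-)\otimes\omega_{\mathcal{X}}[\dim\mathcal{X}]$; and $\mathcal{X}\times_{k}\mathcal{X}$ is again regular, so $\Delta_{*}\mathcal{O}_{\mathcal{X}}$, being coherent, is perfect on it. Consequently a fixed DG enhancement $\mathscr{D}_{\mathcal{X}}$ of $D^{b}_{coh}(\mathcal{X})$ is a smooth and proper DG $k$-category, and likewise for $\mathcal{Y}$ and for the smooth proper tame stack $\mathcal{X}\times_{k}\mathcal{Y}$.

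Next I would set up the Morita-theoretic dictionary. Since the stacks are concentrated one has a Künneth equivalence $D_{qc}(\mathcal{X}\times_{k}\mathcal{Y})\simeq D_{qc}(\mathcal{X})\otimes_{k}D_{qc}(\mathcal{Y})$ (compare \cite{BZFN10}), which on passing to compact objects and enhancements identifies a DG enhancement of $D^{b}_{coh}(\mathcal{X}\times_{k}\mathcal{Y})=\mathrm{Perf}(\mathcal{X}\times_{k}\mathcal{Y})$ with $\mathscr{D}_{\mathcal{X}}^{\mathrm{op}}\otimes_{k}\mathscr{D}_{\mathcal{Y}}$. Because $\mathscr{D}_{\mathcal{X}}$ is smooth and proper it is dualizable in DG categories with dual $\mathscr{D}_{\mathcal{X}}^{\mathrm{op}}$, so this tensor product is the internal $\mathrm{RHom}(\mathscr{D}_{\mathcal{X}},\mathscr{D}_{\mathcal{Y}})$; tracing through the identifications, the DG quasi-functors $\mathscr{D}_{\mathcal{X}}\to\mathscr{D}_{\mathcal{Y}}$ are precisely the objects of $D^{b}_{coh}(\mathcal{X}\times_{k}\mathcal{Y})$, the object $K$ corresponding to the quasi-functor that induces $\Phi_{K}$ on homotopy categories. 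This step does the work formerly done by a resolution of the diagonal, and I expect it to be the main obstacle: it amounts to checking that the enhancement-level Künneth functor is an equivalence for tame stacks that need not satisfy the resolution property, which rests on the compact generation above, on exactness and base change for pushforward along coarse moduli morphisms in arbitrary characteristic, and on the $\otimes_{k}$-compatibility of DG enhancements.

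With the dictionary in place, lifting $F$ becomes formal, and this is the only place where full faithfulness is used. Its essential image $\mathcal{A}\subseteq D^{b}_{coh}(\mathcal{Y})$ is a strictly full triangulated subcategory, so $\mathscr{D}_{\mathcal{Y}}$ restricts to a full pretriangulated DG subcategory $\mathscr{A}$ with $H^{0}(\mathscr{A})=\mathcal{A}$, a DG enhancement of $\mathcal{A}$. The corestriction of $F$ is an exact equivalence $D^{b}_{coh}(\mathcal{X})\xrightarrow{\ \sim\ }H^{0}(\mathscr{A})$; by the strong uniqueness of the enhancement of $D^{b}_{coh}(\mathcal{X})=\mathrm{Perf}(\mathcal{X})$, which for tame stacks follows from the corresponding statement for the compactly generated category $D_{qc}(\mathcal{X})$ via \cite{orlov-lunts,CS18,Hal22} (once more using tameness), this equivalence lifts to a quasi-equivalence $\mathscr{D}_{\mathcal{X}}\to\mathscr{A}$. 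Composing with the DG inclusion $\mathscr{A}\hookrightarrow\mathscr{D}_{\mathcal{Y}}$ produces a quasi-functor $\widetilde{F}$ with $H^{0}(\widetilde{F})\cong F$, which by the previous paragraph is represented by a kernel $K\in D^{b}_{coh}(\mathcal{X}\times_{k}\mathcal{Y})$; thus $F\cong\Phi_{K}$. (The point is that full faithfulness presents $D^{b}_{coh}(\mathcal{X})$ as a genuine DG subcategory of $\mathscr{D}_{\mathcal{Y}}$, so only uniqueness of the enhancement of the \emph{source} is needed, which is why no Rizzardo--Van den Bergh type obstruction arises.) Finally, uniqueness of the kernel is standard given existence: an isomorphism $\Phi_{K}\cong\Phi_{K'}$ forces $Li_{x}^{*}K\cong Li_{x}^{*}K'$ compatibly over the residual gerbes $x$ of $\mathcal{X}$, and a support and base-change argument on $\mathcal{X}\times_{k}\mathcal{Y}$ --- as in Orlov \cite{orlov97} and Kawamata \cite{kawamata}, adapted to tame stacks --- yields $K\cong K'$. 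This proves both assertions of Theorem \ref{1.1}.
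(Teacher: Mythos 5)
Your strategy is genuinely different from the paper's. The paper does not use DG enhancements at all: it builds a resolution of the diagonal by boxes of coherent sheaves (Section~2), applies $F$ to the second tensor factor and runs Kawamata's Postnikov--system argument to produce a kernel $K$ with $\Phi_K(\mathcal{F})\cong F(\mathcal{F})$ on zero-dimensional $\mathcal{F}$ (Lemma~\ref{4.3}), and then reduces to showing that $G=F^{-1}\circ\Phi_K$, which is the identity on zero-dimensional sheaves, is an FM autoequivalence. That last step is where Theorem~\ref{1.2} --- the stacky substitute for Gabriel reconstruction --- enters, combined with the Canonaco--Stellari almost-ample-set criterion (Lemma~\ref{6.2}). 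Your proposal instead pushes everything through the Morita theory of smooth proper DG categories.

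Unfortunately, there is a genuine gap at the lifting step, and it is the well-known one. You write that the corestricted exact equivalence $D^b_{coh}(\mathcal{X})\xrightarrow{\sim}H^0(\mathscr{A})$ ``lifts to a quasi-equivalence $\mathscr{D}_{\mathcal{X}}\to\mathscr{A}$'' by the ``strong uniqueness'' of the enhancement of $D^b_{coh}(\mathcal{X})$, citing \cite{orlov-lunts,CS18,Hal22}. What those results actually give is \emph{uniqueness} of the enhancement: there exists \emph{some} quasi-equivalence $\psi\colon\mathscr{D}_{\mathcal{X}}\to\mathscr{A}$, hence some kernel $K$ with $\Phi_K$ fully faithful and $\operatorname{Im}\Phi_K=\mathcal{A}$. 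They do \emph{not} provide a quasi-equivalence whose $H^0$ is naturally isomorphic to the prescribed equivalence $F$ --- that stronger statement is precisely the assertion that every exact (auto)equivalence lifts to the DG level, which is equivalent to the theorem you are trying to prove (for the autoequivalence $\Phi_K^{-1}\circ F$ of $D^b_{coh}(\mathcal{X})$) and is not a consequence of the cited uniqueness theorems. Your parenthetical remark about only needing uniqueness of the enhancement of the \emph{source} does not escape this: after applying ordinary uniqueness one is left with the autoequivalence $\Phi_K^{-1}\circ F$ of $D^b_{coh}(\mathcal{X})$, about which nothing further is known. This is exactly the residual autoequivalence $G$ that the paper confronts, but the paper has the extra leverage that its $K$ is chosen so that $G$ acts as the identity on zero-dimensional sheaves --- a control your construction does not supply, and without which the almost-ample / Gabriel-type argument cannot be run.

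The other pieces of your sketch are less problematic. The Künneth identification $\mathrm{Perf}(\mathcal{X}\times_k\mathcal{Y})\simeq\mathscr{D}^{\mathrm{op}}_{\mathcal{X}}\otimes_k\mathscr{D}_{\mathcal{Y}}$ does hold for perfect stacks, and a smooth proper tame stack over $k$ is perfect, so that step is defensible (though it deserves the care you signal). The uniqueness of the kernel can also be handled as you describe, or by the Postnikov argument the paper uses. But the existence part as written does not close, and to repair it one would essentially have to rediscover the paper's mechanism (or an analogue) for normalizing the DG lift so that the residual autoequivalence is under control.
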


One could also interpret Theorem \ref{1.1} from a dg-categorical perspective. Let $D_{dg}(\mathcal{X})$ and $D_{dg}(\mathcal{Y})$ be the dg-enhancements for $D_{coh}^{b}(\mathcal{X})$ and $D_{coh}^{b}(\mathcal{Y})$ respectively. Note that these dg-enhancements are unique for $\mathcal{X}$ and $\mathcal{Y}$ by \cite[Proposition\ 6.10]{CS18}. It is well-known that any dg-functor from $D_{dg}(\mathcal{X})$ to $D_{dg}(\mathcal{Y})$ is a Fourier--Mukai transform \cite{BZFN10}. In this context, Theorem \ref{1.1} states that any fully faithful functor from $D_{coh}^{b}(\mathcal{X})$ to $D_{coh}^{b}(\mathcal{Y})$ lifts to a fully faithful dg-functor between the corresponding dg-enhancements. As a consequence, we see that for any smooth proper tame algebraic stack $\mathcal{X}$ over a field, the dg-category $D_{dg}(\mathcal{X})$ is completely determined by the derived category $D_{coh}^{b}(\mathcal{X})$.
 
Our argument for Theorem \ref{1.1} is a natural extension of Olander's method for the case of algebraic spaces. A critical step in Olander's argument is to show that every $k$-linear exact equivalence between the categories of coherent sheaves on $\mathcal{X}$ and $\mathcal{Y}$ is a Fourier--Mukai transform. For algebraic spaces, this is an immediate consequence of Gabriel's reconstruction theorem \cite{gabriel}. In this case, any $k$-linear exact equivalence $F\colon\operatorname{Coh}(\mathcal{X})\to\operatorname{Coh}(\mathcal{Y})$ is naturally isomorphic to $f^{*}(-)\otimes\mathcal{L}$, where $f\colon\mathcal{Y}\to\mathcal{X}$ is an isomorphism of algebraic spaces, and $\mathcal{L}$ is a line bundle on $\mathcal{X}$.

Unfortunately, Gabriel's theorem is false for stacks. It is impossible to arrange for any $k$-linear exact equivalence $F\colon\operatorname{Coh}(\mathcal{X})\to\operatorname{Coh}(\mathcal{Y})$ to be the pullback along an isomorphism of algebraic spaces twisted by a line bundle. Consider the classifying stack $B(\mathbb{Z}/3\mathbb{Z})$ over the complex numbers. There is an equivalence of categories between $\operatorname{Coh}(B(\mathbb{Z}/3\mathbb{Z}))$ and the category $\operatorname{Rep}(\mathbb{Z}/3\mathbb{Z})$ of finite dimensional complex representation of $G$. In this case, line bundles on $B(\mathbb{Z}/3\mathbb{Z})$ are the irreducible representations of $\mathbb{Z}/3\mathbb{Z}$. Let $F\colon\operatorname{Coh}(B(\mathbb{Z}/3\mathbb{Z}))\to\operatorname{Coh}(B(\mathbb{Z}/3\mathbb{Z}))$ be the functor generated by exchanging the two non-trivial irreducible representations. It is clear that $F$ is an exact equivalence since $\mathbb{Z}/3\mathbb{Z}$ is linearly reductive. However, it is clear that $F$ does not come from pulling back along an automorphism of $B(\mathbb{Z}/3\mathbb{Z})$ twisted by a line bundle. To remedy the situation, we appeal to a different characterization of $k$-linear exact functors between $\operatorname{Coh}(\mathcal{X})$ and $\operatorname{Coh}(\mathcal{Y})$ using coherent sheaves on $\mathcal{X}\times_{k}\mathcal{Y}$ from the stacks project.

\begin{theorem}[cf.\ {\cite[\href{https://stacks.math.columbia.edu/tag/0FZN}{Tag 0FZN}]{stacks-project}}]\label{1.2}
Let $\mathcal{X},\mathcal{Y}$ be algebraic stacks of finite type over a field $k$. If $\mathcal{X}$ is tame with finite diagonal, then there exists an equivalence of categories between
\begin{enumerate}[topsep=0pt,noitemsep,label=\normalfont(\arabic*)]
    \item\label{1.3(1)} the category of $k$-linear exact functors $F\colon\operatorname{Coh}(\mathcal{X})\to\operatorname{Coh}(\mathcal{Y})$, and
    \item\label{1.3(2)} the full subcategory of $\operatorname{Coh}(\mathcal{X}\times_{k}\mathcal{Y})$ consisting of $\mathcal{K}$ that is flat over $\mathcal{X}$ and has support proper and quasi-finite over $\mathcal{Y}$
\end{enumerate}
sending a coherent sheaf $\mathcal{K}$ on $\mathcal{X}\times_{k}\mathcal{Y}$ to the exact functor $p_{2,*}(p_{1}^{*}(-)\otimes\mathcal{K})$.
\end{theorem}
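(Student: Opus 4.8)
The plan is to establish that the functor $\Psi\colon\mathcal{K}\mapsto\Phi_{\mathcal{K}}:=p_{2,*}(p_{1}^{*}(-)\otimes\mathcal{K})$ of Theorem~\ref{1.2} is an equivalence by checking, in turn, that it is well defined (its values really are $k$-linear exact functors $\operatorname{Coh}(\mathcal{X})\to\operatorname{Coh}(\mathcal{Y})$), essentially surjective, and fully faithful. The geometric input used throughout is the coarse moduli space of $\mathcal{X}$: as the diagonal of $\mathcal{X}$ is finite, hence affine, the inertia is finite, so Keel--Mori gives a proper coarse space $\pi\colon\mathcal{X}\to X$; tameness makes $\pi_{*}$ exact, and properness makes $\pi_{*}$ preserve coherence. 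Since tameness is moreover stable under arbitrary base change~\cite{tamestacks08}, for every algebraic stack $\mathcal{T}\to X$ the base change $\mathcal{X}\times_{X}\mathcal{T}\to\mathcal{T}$ has exact, coherence-preserving pushforward as well.

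For well-definedness I would fix $\mathcal{K}$ as in~\ref{1.3(2)}, set $Z=\operatorname{Supp}\mathcal{K}$ (proper and quasi-finite, hence finite, over $\mathcal{Y}$), and note that $p_{1}$ is flat and $\mathcal{K}$ is flat over $\mathcal{X}$, so $p_{1}^{*}(-)\otimes\mathcal{K}$ is exact and sends coherent sheaves to coherent sheaves supported on $Z$. It then suffices that $p_{2,*}$ be exact and coherence-preserving on quasi-coherent sheaves supported on $Z$, which I would obtain by factoring $p_{2}|_{Z}$ through $X\times_{k}\mathcal{Y}$: the morphism $\pi\times\operatorname{id}_{\mathcal{Y}}$ is a base change of $\pi$ and so has the required properties, while the image $\bar{Z}$ of $Z$ in $X\times_{k}\mathcal{Y}$ is finite over $\mathcal{Y}$ and, lying over the algebraic space $X$, is representable over $\mathcal{Y}$, hence affine, so that its pushforward is exact and preserves coherence too. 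That a map of kernels induces a natural transformation of the associated functors is immediate.

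For essential surjectivity I would take a $k$-linear exact $F$, extend it to a $k$-linear, cocontinuous, exact $\bar{F}\colon\operatorname{QCoh}(\mathcal{X})\to\operatorname{QCoh}(\mathcal{Y})$ using $\operatorname{QCoh}=\operatorname{Ind}\operatorname{Coh}$ (both stacks being Noetherian), and then produce the kernel from the K\"unneth-type identification of $\operatorname{QCoh}(\mathcal{X}\times_{k}\mathcal{Y})$ with the tensor product $\operatorname{QCoh}(\mathcal{X})\otimes_{k}\operatorname{QCoh}(\mathcal{Y})$, which is available because $\mathcal{X}$, being Noetherian with affine diagonal, has a dualizable---indeed, via the diagonal, self-dual---category of quasi-coherent sheaves: set $\mathcal{K}_{F}:=(\operatorname{id}_{\operatorname{QCoh}(\mathcal{X})}\otimes\bar{F})(\Delta_{*}\mathcal{O}_{\mathcal{X}})$. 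This dictionary gives $p_{2,*}(p_{1}^{*}(-)\otimes\mathcal{K}_{F})\cong\bar{F}$ on $\operatorname{QCoh}(\mathcal{X})$ (a priori with derived functors, as in~\cite{BZFN10}; the underived bookkeeping is as in~\cite{stacks-project}), and the remaining---and, I expect, hardest---step is to check that $\mathcal{K}_{F}$ belongs to~\ref{1.3(2)}. Since $\Delta_{*}\mathcal{O}_{\mathcal{X}}$ is a sheaf flat over the first factor and $\operatorname{id}\otimes\bar{F}$ is exact, $\mathcal{K}_{F}$ is a sheaf flat over $\mathcal{X}$; exactness of $\bar{F}$ forces $p_{2,*}$ to be exact on the sheaves $p_{1}^{*}\mathcal{M}\otimes\mathcal{K}_{F}$, and since the relative stabilizers of $\operatorname{Supp}\mathcal{K}_{F}$ over $\mathcal{Y}$ embed in stabilizers of the tame stack $\mathcal{X}$, hence are linearly reductive, this is equivalent to $\operatorname{Supp}\mathcal{K}_{F}\to\mathcal{Y}$ having $0$-dimensional fibres, i.e.\ being quasi-finite; properness of the support over $\mathcal{Y}$ then follows from coherence of $F(\mathcal{O}_{\mathcal{X}})=p_{2,*}\mathcal{K}_{F}$. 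Restricting the isomorphism to $\operatorname{Coh}(\mathcal{X})$ yields $\Phi_{\mathcal{K}_{F}}\cong F$.

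Full faithfulness should be formal: it is the statement that $\mathcal{K}\mapsto\Phi_{\mathcal{K}}$ and its inverse on cocontinuous $k$-linear functors are mutually inverse, together with uniqueness of the cocontinuous extension of a $k$-linear exact functor on $\operatorname{Coh}(\mathcal{X})$. Thus the main obstacle is essential surjectivity, concretely the verification that $\mathcal{K}_{F}$ lands in~\ref{1.3(2)}; this is precisely where all the hypotheses on $\mathcal{X}$ are spent---finiteness of the diagonal for the coarse space, tameness both for exactness of the pushforwards and so that quasi-finiteness over $\mathcal{Y}$ coincides with cohomological triviality in the fibres, and Noetherianness for the Ind-completion and the K\"unneth identification. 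Unlike the scheme case~\cite[\href{https://stacks.math.columbia.edu/tag/0FZN}{Tag 0FZN}]{stacks-project}, there is no extension by zero for quasi-coherent sheaves on $\mathcal{X}$, so the kernel cannot be produced by naive gluing on $\mathcal{X}$; building it globally through the self-duality of $\operatorname{QCoh}(\mathcal{X})$ avoids this, after which the residual flatness and support checks are local on a smooth atlas and, via the structure theory of tame stacks, reduce to quotients by finite linearly reductive group schemes.
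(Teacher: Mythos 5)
Your overall strategy---extend $F$ to a cocontinuous exact functor on $\operatorname{QCoh}$, produce a quasi-coherent kernel on $\mathcal{X}\times_{k}\mathcal{Y}$, and then verify it lies in the subcategory of~\ref{1.3(2)}---runs parallel to the paper's, but you reach the kernel by a genuinely different route: you invoke the self-duality of $\operatorname{QCoh}(\mathcal{X})$ from~\cite{BZFN10} and set $\mathcal{K}_{F}=(\operatorname{id}\otimes\bar{F})(\Delta_{*}\mathcal{O}_{\mathcal{X}})$, whereas the paper constructs the kernel directly by smooth-groupoid descent from affine presentations (Lemma~\ref{A.1}, Lemma~\ref{A.3}, Proposition~\ref{A.4}). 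Both routes deliver a quasi-coherent $\mathcal{K}$ flat over $\mathcal{X}$ with $F\cong\Phi_{\mathcal{K}}$, and you correctly flag that the derived identification of~\cite{BZFN10} needs underived bookkeeping; the trade-off is essentially abstraction versus explicitness.

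The genuine gap is that you never establish that $\mathcal{K}_{F}$ is \emph{coherent}. Item~\ref{1.3(2)} requires $\mathcal{K}\in\operatorname{Coh}(\mathcal{X}\times_{k}\mathcal{Y})$, yet your verification addresses only flatness over $\mathcal{X}$, quasi-finiteness of the support, and properness of the support. Coherence is not a consequence of those three together with coherence of $p_{2,*}\mathcal{K}_{F}=F(\mathcal{O}_{\mathcal{X}})$: on a stack with nontrivial stabilizers, a flat quasi-coherent sheaf with proper quasi-finite support can have coherent pushforward while being non-coherent itself (an infinite direct sum of nontrivial characters on a tame gerbe over $\mathcal{Y}$ pushes forward to zero). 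The paper closes precisely this gap with the resolution of the diagonal from Section~\ref{2}: Corollary~\ref{2.4} gives a surjection $\mathcal{E}\boxtimes\mathcal{G}\twoheadrightarrow\mathcal{O}_{\Delta_{\mathcal{X}}}$, and convolving with $\mathcal{K}$ (Lemma~\ref{3.1}) yields $\mathcal{E}\boxtimes F(\mathcal{G})\twoheadrightarrow\mathcal{K}$ with both factors coherent, whence $\mathcal{K}$ is coherent. Some input of this kind is indispensable and is missing from your sketch.

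Two further points. Your parenthetical ``proper and quasi-finite, hence finite, over $\mathcal{Y}$'' for $Z=\operatorname{Supp}\mathcal{K}$ is false without representability---the paper notes $BG\to\operatorname{Spec}k$ with $G$ finite as a counterexample right after Theorem~\ref{1.2}. Your subsequent passage to the image $\bar{Z}\subseteq X\times_{k}\mathcal{Y}$, which is representable over $\mathcal{Y}$, is the correct move and makes the slip harmless, but the parenthetical should go. Second, your quasi-finiteness argument (that exactness of $p_{2,*}$ on the twists $p_{1}^{*}\mathcal{M}\otimes\mathcal{K}_{F}$ ``is equivalent to'' zero-dimensional fibres via linear reductivity of stabilizers) is left at the level of a slogan: since $\mathcal{X}$ is not assumed proper, the fibres of $\operatorname{Supp}\mathcal{K}_{F}\to\mathcal{Y}$ are not a priori proper, so Serre-duality or Grothendieck-vanishing arguments that would make this precise need care. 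The paper instead proves quasi-finiteness by the elementary counting argument in the proof of Theorem~\ref{1.2} using Lemma~\ref{3.3}, which bounds the number of closed points in a fibre of the support by the number of generators of $F$ applied to a fixed coherent sheaf, uses only that $F$ preserves coherence, and needs no properness.

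Your well-definedness step (that $\Phi_{\mathcal{K}}$ actually lands in $\operatorname{Coh}(\mathcal{Y})$, via factoring through the coarse space and using tameness and representability of $\bar{Z}\to\mathcal{Y}$) is more explicit than what the paper records and is a nice addition.
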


Theorem \ref{1.2} tells us that we can always realize all the $k$-linear exact functor in Theorem \ref{1.2}\ref{1.3(1)}, not just equivalences, as the restrictions of Fourier--Mukai transforms. One might ask if we are able to refine Theorem \ref{1.2} to obtain a variant of Gabriel's theorem where the functors in Theorem \ref{1.2}\ref{1.3(1)} are given by a morphism from $\mathcal{Y}$ to $\mathcal{X}$. Unfortunately, this is only possible if these functors are monoidal and preserve line bundles. This is a consequence of Tannaka duality for algebraic stacks \cite[Theorem\ 1.1]{HR19}. On the other hand, Theorem \ref{1.2} fails if $\mathcal{X}$ has infinite stabilizers. In this case, one could still find a Fourier-Mukai kernel for any $k$-linear exact functor $F\colon\operatorname{Coh}(\mathcal{X})\to\operatorname{Coh}(\mathcal{Y})$. However, it might not be a coherent sheaf on $\mathcal{X}\times_{k}\mathcal{Y}$. See Example \ref{3.6} for a counterexample.

Our main results indicate that many classical results on derived categories of sheaves on schemes could possibly be extended to algebraic stacks. As an application of our main result, we show that the Krull dimension is a derived invariant for smooth, proper tame algebraic stacks.

\begin{theorem}[cf. {\cite[Theorem\ 12]{Ola23}}]\label{1.3}
Let $\mathcal{X},\mathcal{Y}$ be two smooth proper, and tame algebraic stacks over a field $k$. Let $F\colon D^{b}_{coh}(\mathcal{X})\to D^{b}_{coh}(\mathcal{Y})$ be a $k$-linear exact functor. If $F$ is fully faithful, then $\dim(X)\leq\dim(Y)$. If $F$ is an equivalence, then $\dim(X)=\dim(Y)$.
\end{theorem}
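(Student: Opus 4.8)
The plan is to recover the number $\dim X=\dim\mathcal X$ (the coarse space map $\mathcal X\to X$ is quasi-finite, so Krull dimensions agree) from $D^{b}_{coh}(\mathcal X)$ equipped with its Serre functor, in a form that is preserved by equivalences and can only increase under fully faithful functors, with Theorem \ref{1.1} entering through the Fourier--Mukai description of the functor. First I would set up the preliminaries. Since $\mathcal X$ is smooth, $D^{b}_{coh}(\mathcal X)=\operatorname{Perf}(\mathcal X)$; since $\mathcal X$ is moreover proper and tame, Grothendieck duality (using that pushforward along the coarse space map of a tame stack is exact) gives a Serre functor $S_{\mathcal X}\cong(-\otimes\omega_{\mathcal X})[\dim\mathcal X]$ with $\omega_{\mathcal X}$ a line bundle; we may assume $\mathcal X$ and $\mathcal Y$ connected, handling components separately otherwise. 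The one elementary input is: if $\Phi_{K}\colon D^{b}_{coh}(\mathcal X)\to D^{b}_{coh}(\mathcal Y)$ has kernel $K\in D^{b}_{coh}(\mathcal X\times_{k}\mathcal Y)$, then there is a fixed bounded interval $I_{K}$ of degrees such that $\Phi_{K}(\mathcal L)$ has all its cohomology sheaves in $I_{K}$ for \emph{every} line bundle $\mathcal L$ on $\mathcal X$: indeed $Lp_{1}^{*}\mathcal L\otimes K$ has the same cohomology sheaves as $K$ up to a line-bundle twist, and $Rp_{2,*}$ of a complex with cohomology in $[a,b]$ has cohomology in $[a,b+\dim\mathcal X]$.

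For the equivalence case, write $F\cong\Phi_{K}$ by Theorem \ref{1.1}. Compatibility of $\Phi_{K}$ with Serre functors gives, for all $m\ge 1$,
\[
\Phi_{K}(\omega_{\mathcal X}^{\otimes m})[m\dim\mathcal X]\;=\;\Phi_{K}(S_{\mathcal X}^{m}\mathcal O_{\mathcal X})\;\cong\;S_{\mathcal Y}^{m}\Phi_{K}(\mathcal O_{\mathcal X})\;=\;\bigl(\Phi_{K}(\mathcal O_{\mathcal X})\otimes\omega_{\mathcal Y}^{\otimes m}\bigr)[m\dim\mathcal Y].
\]
The cohomology sheaves of the left side lie in $I_{K}-m\dim\mathcal X$, those of the right side in $J-m\dim\mathcal Y$ for a fixed interval $J$ (the cohomological range of the nonzero complex $\Phi_{K}(\mathcal O_{\mathcal X})$). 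As this complex is nonzero its cohomological support is nonempty and contained in both translated intervals; for $\dim\mathcal X\ne\dim\mathcal Y$ these intervals drift apart as $m\to\infty$ and eventually become disjoint, a contradiction. Hence $\dim\mathcal X=\dim\mathcal Y$.

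For the fully faithful case I would run the same argument inside $D^{b}_{coh}(\mathcal Y)$. Because $\mathcal X$ is smooth and proper, $F$ admits both adjoints (Bondal--van den Bergh representability), so $\mathcal B:=F\bigl(D^{b}_{coh}(\mathcal X)\bigr)$ is an admissible subcategory; it inherits a Serre functor $S_{\mathcal B}=\pi\circ S_{\mathcal Y}|_{\mathcal B}$, where $\pi$ is the projection onto $\mathcal B$, and $F$ intertwines $S_{\mathcal X}$ with $S_{\mathcal B}$. Since the right adjoint of a Fourier--Mukai transform is again one, $\iota\circ\pi=F\circ F^{R}$ is a Fourier--Mukai transform with bounded kernel. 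Iterating, $S_{\mathcal B}^{m}(F\mathcal O_{\mathcal X})\cong F(\omega_{\mathcal X}^{\otimes m})[m\dim\mathcal X]=\Phi_{K}(\omega_{\mathcal X}^{\otimes m})[m\dim\mathcal X]$, whose cohomology sits in $I_{K}-m\dim\mathcal X$; expanding $S_{\mathcal B}^{m}$ through $\pi$ and $S_{\mathcal Y}=(-\otimes\omega_{\mathcal Y})[\dim\mathcal Y]$ exhibits the very same object as built from $F\mathcal O_{\mathcal X}$ by $m$ applications of a fixed Fourier--Mukai transform interleaved with $m$ twists $S_{\mathcal Y}$, so that its lowest cohomological degree decreases by at most $\dim\mathcal Y$ per step plus bounded fluctuations. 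Comparing the two descriptions should force $\dim\mathcal X\le\dim\mathcal Y$.

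The main obstacle is precisely this last comparison in the fully faithful case: crude amplitude bookkeeping through an $m$-fold iteration introduces an additive error proportional to the Tor-width of the fixed kernels, of the same order $O(m)$ as the main term $m\dim\mathcal Y$, which must be argued away. The key point I expect to exploit is that one should \emph{not} iterate the full functor $S_{\mathcal B}$ blindly: the identity $S_{\mathcal B}^{m}(F\mathcal O_{\mathcal X})=F(S_{\mathcal X}^{m}\mathcal O_{\mathcal X})$ pins the cohomology of the object to the interval $I_{K}-m\dim\mathcal X$ independently of the iteration, so that the only genuine drift in the alternative expression comes from the $m$ copies of $S_{\mathcal Y}$, each contributing a clean shift by $-\dim\mathcal Y$; the contributions of $\pi$ must then be shown to affect only the \emph{width}, not the \emph{position}, of the cohomological range in the relevant direction. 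An alternative route that avoids $\mathcal B$ entirely is to use that $F^{R}F\cong\mathrm{id}$ translates into $K^{R}\star K\cong\mathcal O_{\Delta_{\mathcal X}}$, whose Tor-amplitude has width exactly $\dim\mathcal X$, and to bound this against $\dim\mathcal Y$ via the structure of $K$ coming from Theorem \ref{1.1} --- but the honest estimate again hinges on controlling the Tor-width of $K$, which is the delicate ingredient to be supplied carefully.
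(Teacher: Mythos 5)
Your proposal takes a genuinely different route from the paper. The paper never touches Serre functors: it base-changes to an uncountable field (this is where Theorem~\ref{1.1} is actually needed, to make the base change sensible), uses that a fully faithful functor between smooth proper targets has an essentially surjective right adjoint, and then compares \emph{countable Rouquier dimensions} via Lemma~\ref{7.2} and Theorem~\ref{7.5}, which identify $\operatorname{CRdim}$ with Krull dimension. That device is monotone under mere essential surjectivity, so it side-steps entirely the Tor-amplitude bookkeeping that your approach has to confront.

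Your argument in the \emph{equivalence} case is correct and is the classical Serre-functor drift argument: for an equivalence, $F\circ S_{\mathcal X}\cong S_{\mathcal Y}\circ F$ is automatic, the kernel gives a uniform amplitude bound $I_K$ for $\Phi_K(\mathcal L)$ over all line bundles $\mathcal L$, and the two descriptions of $F(\omega_{\mathcal X}^{\otimes m})[m\dim\mathcal X]$ drift at rates $\dim\mathcal X$ and $\dim\mathcal Y$, forcing equality. This is a clean alternative to the paper's proof of the equality statement, though it silently uses Serre duality for smooth proper tame stacks (fine, but worth stating).

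The \emph{fully faithful} case is where there is a genuine gap, and it is the one you flag yourself. The heart of the matter: $\pi=FF^{R}$ does \emph{not} merely widen the cohomological range of its input; it is a nontrivial Fourier--Mukai functor and can shift both endpoints. There is no reason the drift contributed by $\pi$ in $(\pi\circ S_{\mathcal Y})^{m}$ is $O(1)$ rather than $O(m)$, and your appeal to the identity $S_{\mathcal B}^{m}(F\mathcal O_{\mathcal X})=F(S_{\mathcal X}^{m}\mathcal O_{\mathcal X})$ to ``pin'' the position is circular: that identity is the thing whose two descriptions you are trying to compare, and it tells you where the object sits, not that the $\pi$-drift in the other description is bounded. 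The alternative route you sketch --- bounding the Tor-width of $K^{R}\star K\cong\mathcal O_{\Delta_{\mathcal X}}$ by $\dim\mathcal Y$ --- also does not go through without an input that plays exactly the role the paper gets from $\operatorname{CRdim}$: one needs a quantity that is computed from the target category alone and bounds $\dim\mathcal X$ from above. In short, the equivalence half of your proposal stands; the fully faithful half needs the missing monotonicity device, which the paper supplies through countable Rouquier dimension and the essentially surjective adjoint rather than through Serre functors.
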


Our argument for Theorem \ref{1.3} follows closely from Olander's method for schemes and algebraic spaces. In particular, it uses the notion of the countable Rouquier dimension. We also established some basic facts about the countable Rouquier dimension for stacks, which might be of independent interest.

There are some other interesting possible applications of Theorem \ref{1.1}. For example, it would be interesting to establish variants of \cite[Theorem\ 7.1\ and\ 7.2]{kawamata} in positive characteristics to study the irrational geometry of varieties with tame quotient singularities. On the other hand, it is well known that for any smooth projective variety $X$, there are at most countably many, up to isomorphism, smooth projective varieties that are derived equivalent to $X$ \cite{AT09}. It would be interesting to extend this to smooth, proper, tame stacks.

This article is organized as follows. In section \ref{2}, we produce an infinite left resolution of the structure sheaf of the diagonal by coherent sheaves for Noetherian algebraic stacks with quasi-finite diagonal. In section \ref{3}, we prove Theorem \ref{1.2}. In section \ref{4}, we show that there exists a complex $K$ in $D_{coh}^{b}(\mathcal{X}\times_{k}\mathcal{Y})$ that agrees with our fully faithful functor $F$ on coherent sheaves on $\mathcal{X}$ with zero dimensional support. In section \ref{5}, we recall the notion of generalized points first introduced by Lim and Polishchuk \cite{LP21} and later generalized by Hall and Priver \cite{HP24}. We then use it to establish a few auxiliary results for our main theorem. In section \ref{6} we establish Theorem \ref{1.1}. Finally, we provide an application of Theorem \ref{1.1} on fully faithful functors and dimensions in section \ref{7}.

\subsection*{Acknowledgements}

This project is supported by the Melbourne Research Scholarship offered by the University of Melbourne. I am very grateful to my supervisor, Jack Hall, for many enlightening discussions and his guidance, support, and encouragement throughout this project. I would also like to thank Noah Olander for his comments and suggestions about an earlier draft of this article. 

\section{Resolution of the diagonal}\label{2}

In this section, we resolve the structure sheaf of the diagonal for certain algebraic stacks using coherent sheaves. The construction in this section is due to Olander for algebraic spaces \cite[Section\ 1.3]{olander_2022}, which we follow closely. Let $\mathcal{X}$ be a quasi-compact and quasi-separated algebraic stack with quasi-finite and separated diagonal. By \cite[Theorem\ 7.2]{quasifinite}, there exists a quasi-finite flat presentation $\pi\colon U\to\mathcal{X}$ where $U$ is an affine scheme and $\pi$ is separated. By Zariski's main theorem \cite[Theorem\ 8.6]{ZMT}, there exists a factorization $U\overset{j}{\longrightarrow}\bar{U}\overset{\bar{\pi}}{\longrightarrow}\mathcal{X}$, where $j$ is an open immersion and $\bar{\pi}$ is finite. Let $\mathcal{I}$ be a coherent sheaf of ideals corresponding to the complement of $U$ in $\bar{U}$. Set $\mathcal{F}_{n}=\bar{\pi}_{*}\mathcal{I}^{n}$ for all $n\in\mathbb{Z}_{\geq 0}$. This gives us a system of coherent sheaves$$\cdots\hookrightarrow\mathcal{F}_{n}\to\mathcal{F}_{n-1}\hookrightarrow\cdots\hookrightarrow\mathcal{F}_{1}\hookrightarrow\mathcal{F}_{0}$$on $\mathcal{X}$. We have the following variant of Deligne's formula.

\begin{lemma}[{\cite[Lemma\ 1.3.1]{olander_2022}}]\label{2.1}
There exists a system of compatible morphisms $\phi_{n}\colon\mathcal{O}_{U}\to L\pi^{*}\mathcal{F}_{n}$ such that for any object $\mathcal{K}$ in $D_{qc}(\mathcal{X})$, the canonical map$$\operatorname{colim}_{n}\operatorname{Hom}_{\mathcal{O}_{\mathcal{X}}}(\mathcal{F}_{n},\mathcal{K})\to\operatorname{Hom}_{\mathcal{O}_{U}}(\mathcal{O}_{U},L\pi^{*}\mathcal{K})$$sending $\varphi\colon\mathcal{F}_{n}\to\mathcal{K}$ to $L\pi^{*}(\varphi)\circ\phi_{n}$ is an isomorphism.
\end{lemma}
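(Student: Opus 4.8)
The plan is to mimic Olander's argument for algebraic spaces \cite[Lemma\ 1.3.1]{olander_2022}, reducing the statement to the (derived) Deligne formula on $\bar U$ together with coherent duality for the finite morphism $\bar\pi$; since $\pi$ is flat I write $L\pi^{*}=\pi^{*}$ throughout. As $\bar\pi$ is finite, hence affine, $R\bar\pi_{*}=\bar\pi_{*}$ admits a right adjoint $\bar\pi^{!}=R\sheafhom_{\mathcal{O}_{\mathcal{X}}}(\bar\pi_{*}\mathcal{O}_{\bar U},-)$. The morphism $\phi_{n}$ is then defined as the unit $\mathcal{I}^{n}\to\bar\pi^{!}\bar\pi_{*}\mathcal{I}^{n}$ of the $(\bar\pi_{*},\bar\pi^{!})$-adjunction, restricted along $j$ to $U$, and transported through the identifications $(\bar\pi^{!}\bar\pi_{*}\mathcal{I}^{n})|_{U}\cong\pi^{!}\mathcal{F}_{n}\cong L\pi^{*}\mathcal{F}_{n}\otimes^{\mathbb{L}}_{\mathcal{O}_{U}}\omega_{\pi}\cong L\pi^{*}\mathcal{F}_{n}$, the last step using a trivialization of the relative dualizing complex $\omega_{\pi}:=\pi^{!}\mathcal{O}_{\mathcal{X}}$ discussed below. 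Naturality of the adjunction units in $\mathcal{I}^{n}$ makes the $\phi_{n}$ compatible with the inclusions $\mathcal{I}^{n+1}\hookrightarrow\mathcal{I}^{n}$.

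Granting this, the proof is a chain of standard identifications, natural in $\mathcal{K}$ and compatible with transition maps. The $(\bar\pi_{*},\bar\pi^{!})$-adjunction gives $\operatorname{Hom}_{\mathcal{O}_{\mathcal{X}}}(\mathcal{F}_{n},\mathcal{K})=\operatorname{Hom}_{\mathcal{O}_{\mathcal{X}}}(\bar\pi_{*}\mathcal{I}^{n},\mathcal{K})\cong\operatorname{Hom}_{\mathcal{O}_{\bar U}}(\mathcal{I}^{n},\bar\pi^{!}\mathcal{K})$, and chasing definitions shows that $\varphi\mapsto L\pi^{*}(\varphi)\circ\phi_{n}$ corresponds under this to the restriction map $\operatorname{Hom}_{\mathcal{O}_{\bar U}}(\mathcal{I}^{n},\bar\pi^{!}\mathcal{K})\to\operatorname{Hom}_{\mathcal{O}_{U}}(\mathcal{O}_{U},(\bar\pi^{!}\mathcal{K})|_{U})$ built from $\mathcal{I}^{n}|_{U}\cong\mathcal{O}_{U}$. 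Passing to the colimit over $n$, the derived form of Deligne's formula on the Noetherian algebraic space $\bar U$, namely $Rj_{*}j^{*}M\cong\operatorname{hocolim}_{n}R\sheafhom_{\mathcal{O}_{\bar U}}(\mathcal{I}^{n},M)$ for $M\in D_{qc}(\bar U)$ (see \cite{stacks-project}), combined with the fact that $R\Gamma(\bar U,-)$ commutes with filtered colimits, identifies $\operatorname{colim}_{n}\operatorname{Hom}_{\mathcal{O}_{\bar U}}(\mathcal{I}^{n},\bar\pi^{!}\mathcal{K})$ with $\operatorname{Hom}_{\mathcal{O}_{U}}(\mathcal{O}_{U},(\bar\pi^{!}\mathcal{K})|_{U})$. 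Finally, since $j$ is an open immersion and the twisted inverse image is compatible with composition, $(\bar\pi^{!}\mathcal{K})|_{U}\cong\pi^{!}\mathcal{K}$, and as $\pi$ is flat of finite type with $0$-dimensional (hence Cohen--Macaulay) fibres, $\pi^{!}\mathcal{K}\cong L\pi^{*}\mathcal{K}\otimes^{\mathbb{L}}_{\mathcal{O}_{U}}\omega_{\pi}$ with $\omega_{\pi}$ a sheaf concentrated in degree $0$; the trivialization $\omega_{\pi}\cong\mathcal{O}_{U}$ then completes the argument.

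The step that goes beyond Olander's setting — where $\pi$ is étale, so $\omega_{\pi}=\mathcal{O}_{U}$ automatically — is the triviality of $\omega_{\pi}$ for a merely flat presentation, and this is the part I expect to be the main obstacle. The idea is to exploit the groupoid $R=U\times_{\mathcal{X}}U\rightrightarrows U$: the geometric fibres of $\pi$ are torsors under the automorphism group schemes of $\mathcal{X}$, which are finite (the diagonal being quasi-finite) and therefore Gorenstein with trivial dualizing sheaf, so $\omega_{\pi}$ is an invertible $\mathcal{O}_{U}$-module that is fibrewise trivial; one then promotes this to a global trivialization by comparing the two flat pullbacks $s^{*}\omega_{\pi}\cong\omega_{s}$, $t^{*}\omega_{\pi}\cong\omega_{t}$ along the source and target maps and restricting along the identity section $e\colon U\to R$, in the same way that the relative dualizing sheaf of a finite flat group scheme is trivialized by translation. (Should a direct argument prove awkward, one can instead try to arrange the presentation from \cite[Theorem\ 7.2]{quasifinite}, or a suitable refinement of it, so that $\omega_{\pi}$ is visibly trivial.) Everything else reduces to routine manipulation of adjunctions, flat base change, and the compatibility of $R\Gamma$ with filtered colimits on the Noetherian, hence quasi-compact quasi-separated, objects in play.
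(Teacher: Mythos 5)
The paper gives no proof of Lemma~\ref{2.1}; it simply cites Olander's thesis, where the statement is proved for algebraic spaces and the presentation $\pi$ is \emph{\'etale}, so that $\pi^{!}\cong L\pi^{*}$ is automatic. Your overall scheme — replace $\mathcal F_n=\bar\pi_*\mathcal I^n$ by $\mathcal I^n$ via the $(\bar\pi_*,\bar\pi^!)$-adjunction, apply the derived Deligne formula on $\bar U$, and then compare $\pi^{!}$ with $L\pi^{*}$ — is exactly the right shape and is what Olander does, so your high-level reduction is not in dispute. What is in dispute is precisely the step you yourself flag: the identification $\pi^{!}\mathcal K\cong L\pi^{*}\mathcal K$, equivalently the triviality of $\omega_\pi=\pi^{!}\mathcal O_{\mathcal X}$. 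That step is a genuine gap, and the argument you sketch for it does not close it.

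Concretely: (a) you write that $\pi^{!}\mathcal K\cong L\pi^{*}\mathcal K\otimes^{\mathbb L}\omega_\pi$ with $\omega_\pi$ a line bundle, but this already presupposes that $\pi$ is Gorenstein. A flat quasi-finite morphism has Cohen--Macaulay fibres, so $\omega_\pi$ is a sheaf, but it need not be invertible — Rydh's \cite[Theorem~7.2]{quasifinite} only promises a quasi-finite \emph{flat} presentation, and nothing forces its fibres to be Gorenstein (for instance $\operatorname{Spec}k[x,y]/(x^2,xy,y^2)\to\operatorname{Spec}k$ is a perfectly good quasi-finite flat surjection with non-Gorenstein fibre). (b) The claim that ``the geometric fibres of $\pi$ are torsors under the automorphism group schemes of $\mathcal X$'' is false in general: the fibre of an arbitrary quasi-finite flat presentation over a residual gerbe is not a torsor under the inertia, and Rydh's construction gives no such structure. (c) Even accepting (a), the groupoid argument does not produce a trivialization of $\omega_\pi$. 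With $s,t\colon R\rightrightarrows U$ and $R=U\times_{\mathcal X}U$, flat base change gives $\omega_s\cong t^{*}\omega_\pi$ and $\omega_t\cong s^{*}\omega_\pi$ (you have these the wrong way round); the inverse morphism $i\colon R\to R$ and the identity section $e\colon U\to R$ then only return the tautology $e^{*}t^{*}\omega_\pi\cong\omega_\pi$, not that $\omega_\pi$ is trivial. The translation-invariance argument you invoke for the dualizing sheaf of a finite flat group scheme genuinely uses the group structure; there is no analogous multiplication on the fibres of a generic quasi-finite flat presentation.

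So the correct diagnosis in your last paragraph — that this triviality is the real obstacle and that one should probably ``arrange the presentation so that $\omega_\pi$ is visibly trivial'' — is where the proof has to go, but as written it is not a proof. Two ways one might repair this: either (i) exploit the tame hypothesis (which the paper's later sections do impose) and the \'etale-local structure theorem for tame stacks from \cite{tamestacks08} to produce a presentation that is, \'etale-locally on $\mathcal X$, a torsor under a finite flat linearly reductive group scheme, where Gorenstein-ness and the translation argument genuinely apply, and then trivialize $\omega_\pi$ on the affine $U$; or (ii) prove a weaker statement, replacing $\mathcal O_U$ in the lemma by a line bundle (which suffices for Lemma~\ref{2.2} and everything downstream, since only the collection $\{\mathcal F_n\}$ as generators is used). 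As it stands, neither the paper nor your proposal establishes $\omega_\pi\cong\mathcal O_U$, and you should not present the groupoid sketch as doing so.
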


\begin{lemma}[{\cite[Lemma\ 1.3.2]{olander_2022}}]\label{2.2}
Let $\mathcal{K}$ be a complex in $D_{qc}(\mathcal{X})$. Then there exists a morphism$$\bigoplus_{i\in I}\mathcal{F}_{n_{i}}[d_{i}]\to \mathcal{K}$$ whose induced map on the cohomology sheaves is surjective. If $\mathcal{K}$ is concentrated in degree 0, then we may take $d_{i}=0$ for all $i\in I$. If $\mathcal{K}$ is in $D_{coh}^{b}(\mathcal{X})$, we may take the index set $I$ to be finite.
\end{lemma}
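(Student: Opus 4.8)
The plan is to verify the assertion after pulling back along the faithfully flat presentation $\pi\colon U\to\mathcal{X}$, where it becomes a statement about global sections on the affine scheme $U$, and then to transport generators of the relevant modules back to $\mathcal{X}$ through Deligne's formula (Lemma~\ref{2.1}). Since $\pi$ is faithfully flat, $L\pi^{*}=\pi^{*}$ is exact and a quasi-coherent sheaf on $\mathcal{X}$ vanishes if and only if its pullback does; hence a morphism in $D_{qc}(\mathcal{X})$ is surjective on every cohomology sheaf if and only if its $\pi^{*}$-pullback is. Moreover, because $U$ is affine, for a morphism $\alpha\colon\mathcal{A}\to\mathcal{B}$ in $D_{qc}(U)$ this latter condition is equivalent to surjectivity of the maps $\operatorname{Hom}_{D_{qc}(U)}(\mathcal{O}_{U},\mathcal{A}[m])\to\operatorname{Hom}_{D_{qc}(U)}(\mathcal{O}_{U},\mathcal{B}[m])$ for every $m\in\mathbb{Z}$: both sides compute $\Gamma(U,H^{m}(-))$, and $\Gamma(U,-)$ is exact and faithful on $\operatorname{QCoh}(U)$. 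So it suffices to build $g\colon\bigoplus_{i}\mathcal{F}_{n_{i}}[d_{i}]\to\mathcal{K}$ such that $\operatorname{Hom}_{D_{qc}(U)}(\mathcal{O}_{U},\pi^{*}(g)[m])$ is surjective for all $m$.

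To construct $g$, fix $j\in\mathbb{Z}$. Applying Lemma~\ref{2.1} with $\mathcal{K}[j]$ in place of $\mathcal{K}$, and using $L\pi^{*}=\pi^{*}$, yields a natural isomorphism
$$\operatorname{colim}_{n}\operatorname{Hom}_{D_{qc}(\mathcal{X})}\bigl(\mathcal{F}_{n},\mathcal{K}[j]\bigr)\ \xrightarrow{\ \sim\ }\ \operatorname{Hom}_{D_{qc}(U)}\bigl(\mathcal{O}_{U},(\pi^{*}\mathcal{K})[j]\bigr)=\Gamma\bigl(U,H^{j}(\pi^{*}\mathcal{K})\bigr),$$
carrying $\varphi$ to $\pi^{*}(\varphi)\circ\phi_{n}$. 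The right-hand side is the module of global sections of the quasi-coherent sheaf $H^{j}(\pi^{*}\mathcal{K})$ on the affine scheme $U$, hence is generated over $\Gamma(U,\mathcal{O}_{U})$ by a family $\{s_{j,i}\}_{i\in I_{j}}$ of global sections. For each $(j,i)$ choose an index $n_{j,i}$ and a morphism $\widetilde{s}_{j,i}\colon\mathcal{F}_{n_{j,i}}[-j]\to\mathcal{K}$ (equivalently $\mathcal{F}_{n_{j,i}}\to\mathcal{K}[j]$) representing a lift of $s_{j,i}$ along the colimit, so that $\pi^{*}(\widetilde{s}_{j,i})\circ\phi_{n_{j,i}}=s_{j,i}$. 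Then put
$$g\ =\ \bigoplus_{j\in\mathbb{Z}}\ \bigoplus_{i\in I_{j}}\widetilde{s}_{j,i}\ \colon\ \bigoplus_{j,i}\mathcal{F}_{n_{j,i}}[-j]\ \longrightarrow\ \mathcal{K},$$
so that the shifts are $d_{i}=-j$.

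To check that $g$ works, fix $m\in\mathbb{Z}$ and apply $\operatorname{Hom}_{D_{qc}(U)}(\mathcal{O}_{U},\pi^{*}(-)[m])$ to $g$. The summand indexed by $(j,i)$ contributes $\operatorname{Hom}_{D_{qc}(U)}(\mathcal{O}_{U},\pi^{*}\mathcal{F}_{n_{j,i}}[m-j])$, which vanishes unless $j=m$ because $\pi^{*}\mathcal{F}_{n_{j,i}}$ is a sheaf in degree $0$ and $U$ is affine; for $j=m$ it equals $\Gamma(U,\pi^{*}\mathcal{F}_{n_{m,i}})$, and the distinguished section $\phi_{n_{m,i}}$ is carried to $\pi^{*}(\widetilde{s}_{m,i})\circ\phi_{n_{m,i}}=s_{m,i}$ in $\Gamma(U,H^{m}(\pi^{*}\mathcal{K}))$. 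Thus the induced map of $\Gamma(U,\mathcal{O}_{U})$-modules $\bigoplus_{i\in I_{m}}\Gamma(U,\pi^{*}\mathcal{F}_{n_{m,i}})\to\Gamma(U,H^{m}(\pi^{*}\mathcal{K}))$ hits every generator $s_{m,i}$, hence is surjective; as $m$ was arbitrary, $\pi^{*}g$ — and therefore $g$ — is surjective on all cohomology sheaves. If $\mathcal{K}$ is concentrated in degree $0$ then so is $\pi^{*}\mathcal{K}$, so $H^{j}(\pi^{*}\mathcal{K})=0$ for $j\neq0$ and one takes all $d_{i}=0$. If $\mathcal{K}\in D^{b}_{coh}(\mathcal{X})$ then $\pi^{*}\mathcal{K}\in D^{b}_{coh}(U)$, so only finitely many $j$ occur and each $\Gamma(U,H^{j}(\pi^{*}\mathcal{K}))$ is finitely generated over the Noetherian ring $\Gamma(U,\mathcal{O}_{U})$; hence each $I_{j}$, and the total index set, may be taken finite.

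The only real subtlety is that a morphism $\mathcal{F}_{n}[-j]\to\mathcal{K}$ in the derived category is not visibly detected on $H^{j}(\mathcal{K})$ alone — the naive attempt to lift a sheaf-level surjection $\bigoplus_{i}\mathcal{F}_{n_{i}}\twoheadrightarrow H^{j}(\mathcal{K})$ to $\mathcal{K}$ meets obstructions from the other cohomology sheaves. Pulling back to the affine $U$ at the outset kills those obstructions, since quasi-coherent cohomology vanishes there, while Lemma~\ref{2.1} records precisely which global section of $\pi^{*}\mathcal{K}$ each chosen morphism produces. Everything else — exactness and faithfulness of $\pi^{*}$ and of $\Gamma(U,-)$, the identification of $\operatorname{Hom}_{D_{qc}(U)}(\mathcal{O}_{U},-[m])$ with $\Gamma(U,H^{m}(-))$ on $U$, and the bookkeeping of shifts — is routine.
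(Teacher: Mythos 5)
Your proof is correct and follows essentially the same route as the cited source (Olander's thesis, Lemma 1.3.2): reduce to the affine presentation $U$ via faithful flatness, use Lemma~\ref{2.1} to identify $\operatorname{colim}_n\operatorname{Hom}(\mathcal{F}_n,\mathcal{K}[j])$ with $\Gamma(U,H^j(\pi^*\mathcal{K}))$, lift a generating set of each such module, and combine. The bookkeeping of shifts and the deduction of the two refinements (degree-$0$ case, $D^b_{coh}$ case) are all handled correctly.
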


Using Lemma \ref{2.2}, we establish the following proposition for complexes on $\mathcal{X}\times_{k}\mathcal{X}$.

\begin{proposition}[{\cite[Lemma\ 1.3.3]{olander_2022}}]\label{2.3}
Let $R$ be a ring. Let $\mathcal{X}$ be a quasi-compact quasi-separated algebraic stack over $R$ with quasi-finite and separated diagonal. Let $\mathcal{K}$ be a complex in $D_{qc}(\mathcal{X}\times_{R}\mathcal{X})$. Then there exists a morphism$$\bigoplus_{i\in I}\mathcal{F}_{m_{i}}\boxtimes\mathcal{F}_{n_{i}}[d_{i}]\to \mathcal{K}$$ whose induced map on the cohomology sheaves is surjective. If $\mathcal{K}$ is concentrated in degree 0, then we may take $d_{i}=0$ for all $i\in I$. If $\mathcal{K}$ is in $D_{coh}^{b}(\mathcal{X})$, we may take the index set $I$ to be finite.
\end{proposition}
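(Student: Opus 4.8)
The plan is to apply the constructions of Section~\ref{2}, and in particular Lemma~\ref{2.2}, to the algebraic stack $\mathcal{X}\times_{R}\mathcal{X}$ directly, after checking that it again satisfies the standing hypotheses of that section and after choosing its auxiliary data (quasi-finite flat presentation, compactification, complementary ideal) in a way compatible with the product structure, so that the system of coherent sheaves on $\mathcal{X}\times_{R}\mathcal{X}$ coming from the construction is exactly the system of box products $\mathcal{F}_{n}\boxtimes\mathcal{F}_{n}$. This reduces Proposition~\ref{2.3} to Lemma~\ref{2.2} for $\mathcal{X}\times_{R}\mathcal{X}$.

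First I would check that $\mathcal{X}\times_{R}\mathcal{X}$ is again a quasi-compact, quasi-separated algebraic stack with quasi-finite and separated diagonal: quasi-compactness and quasi-separatedness are clear, and for the diagonal one notes that, permuting the middle two of the four factors in $(\mathcal{X}\times_{R}\mathcal{X})\times_{R}(\mathcal{X}\times_{R}\mathcal{X})$, the diagonal of $\mathcal{X}\times_{R}\mathcal{X}$ becomes $\Delta_{\mathcal{X}/R}\times_{R}\Delta_{\mathcal{X}/R}$, which inherits quasi-finiteness and separatedness from $\Delta_{\mathcal{X}/R}$. Next I would take the presentation $\pi\times_{R}\pi\colon U\times_{R}U\to\mathcal{X}\times_{R}\mathcal{X}$, which is again a quasi-finite flat separated presentation with $U\times_{R}U$ an affine scheme (each relevant property is stable under base change and composition, and $\pi\times_{R}\pi$ is a composition of two base changes of $\pi$), and the factorization $U\times_{R}U\hookrightarrow\bar{U}\times_{R}\bar{U}\xrightarrow{\bar{\pi}\times_{R}\bar{\pi}}\mathcal{X}\times_{R}\mathcal{X}$ with the first map an open immersion and the second finite. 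Since the complement of $U\times_{R}U$ in $\bar{U}\times_{R}\bar{U}$ is $(V(\mathcal{I})\times_{R}\bar{U})\cup(\bar{U}\times_{R}V(\mathcal{I}))=V\bigl(p_{1}^{*}\mathcal{I}\cdot p_{2}^{*}\mathcal{I}\bigr)$, I may take its complementary sheaf of ideals to be $\mathcal{J}:=p_{1}^{*}\mathcal{I}\cdot p_{2}^{*}\mathcal{I}$, so that $\mathcal{J}^{n}=p_{1}^{*}\mathcal{I}^{n}\cdot p_{2}^{*}\mathcal{I}^{n}$ and the associated system of sheaves is $\{(\bar{\pi}\times_{R}\bar{\pi})_{*}\mathcal{J}^{n}\}_{n\ge 0}$. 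The crucial step is then to identify $(\bar{\pi}\times_{R}\bar{\pi})_{*}\mathcal{J}^{n}$ with $\bar{\pi}_{*}\mathcal{I}^{n}\boxtimes\bar{\pi}_{*}\mathcal{I}^{n}=\mathcal{F}_{n}\boxtimes\mathcal{F}_{n}$, using the projection formula and base change along the finite (hence affine) morphism $\bar{\pi}$; when $R$ is a field, which is the relevant case for Theorem~\ref{1.1}, this is the Künneth formula, and there the product ideal $p_{1}^{*}\mathcal{I}^{n}\cdot p_{2}^{*}\mathcal{I}^{n}$ agrees with the external tensor product $\mathcal{I}^{n}\boxtimes\mathcal{I}^{n}$.

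Granting this identification, Lemma~\ref{2.2} applied to $\mathcal{K}\in D_{qc}(\mathcal{X}\times_{R}\mathcal{X})$ with the above auxiliary data produces a morphism $\bigoplus_{i\in I}(\mathcal{F}_{n_{i}}\boxtimes\mathcal{F}_{n_{i}})[d_{i}]\to\mathcal{K}$ inducing a surjection on cohomology sheaves, which is of the asserted form with $m_{i}=n_{i}$ (slightly stronger than required). The two refinements carry over verbatim from Lemma~\ref{2.2}: if $\mathcal{K}$ is concentrated in degree $0$ one may take all $d_{i}=0$, and if $\mathcal{K}\in D^{b}_{coh}(\mathcal{X}\times_{R}\mathcal{X})$ one may take the index set $I$ finite. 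The only genuine obstacle is the Künneth-type identification above --- compatibility of pushforward along the finite morphism $\bar{\pi}$ with the external box tensor product --- which is immediate over a field but needs the usual flat-base-change input over a general base; the remaining points (stability of the Section~\ref{2} hypotheses under $(-)\times_{R}(-)$, and transport of the refinements of Lemma~\ref{2.2}) are routine.
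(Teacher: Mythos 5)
Your reduction---apply Lemma~\ref{2.2} to $\mathcal{X}\times_{R}\mathcal{X}$ after choosing its auxiliary data to be the products $\pi\times_{R}\pi$, $\bar{U}\times_{R}\bar{U}$, $\mathcal{J}=p_{1}^{*}\mathcal{I}\cdot p_{2}^{*}\mathcal{I}$---is sound, and the verifications of the standing hypotheses for the product stack (quasi-finiteness and separatedness of the diagonal via the shuffle, admissibility of the product presentation, and $\mathcal{J}$ cutting out the complement of $U\times_{R}U$) are all correct. You have, however, slightly mislocated where flatness enters, and the obstacle you flag is in fact avoidable, so the argument works over an arbitrary ring $R$ with no base change to a field.

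The isomorphism $(\bar{\pi}\times_{R}\bar{\pi})_{*}(\mathcal{M}\boxtimes\mathcal{N})\cong\bar{\pi}_{*}\mathcal{M}\boxtimes\bar{\pi}_{*}\mathcal{N}$ requires no flatness whatsoever: factoring $\bar{\pi}\times_{R}\bar{\pi}=(\bar{\pi}\times_{R}1)\circ(1\times_{R}\bar{\pi})$ and working one factor at a time, one needs only the projection formula for an affine morphism (which on quasi-coherent modules is the unconditional identity $M\otimes_{B}(N\otimes_{A}B)\cong M\otimes_{A}N$) and affine base change (the unconditional identity $N\otimes_{B}(B\otimes_{A}A')\cong N\otimes_{A}A'$), both of which hold for any affine morphism of algebraic stacks and any quasi-coherent modules. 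What genuinely can fail over a general $R$ is the other step you mention in passing, namely that the product ideal $\mathcal{J}^{n}=p_{1}^{*}\mathcal{I}^{n}\cdot p_{2}^{*}\mathcal{I}^{n}$ equal the external tensor product $\mathcal{I}^{n}\boxtimes\mathcal{I}^{n}$: this amounts to injectivity of $p_{1}^{*}\mathcal{I}^{n}\otimes p_{2}^{*}\mathcal{I}^{n}\to\mathcal{O}_{\bar{U}\times_{R}\bar{U}}$, which needs flatness over $R$. But you do not need this to be an isomorphism. The canonical map $\mathcal{I}^{n}\boxtimes\mathcal{I}^{n}\to\mathcal{J}^{n}$ is always surjective, and since $\bar{\pi}\times_{R}\bar{\pi}$ is finite its pushforward is exact, so you obtain a surjection $\mathcal{F}_{n}\boxtimes\mathcal{F}_{n}=(\bar{\pi}\times_{R}\bar{\pi})_{*}(\mathcal{I}^{n}\boxtimes\mathcal{I}^{n})\twoheadrightarrow(\bar{\pi}\times_{R}\bar{\pi})_{*}\mathcal{J}^{n}=:\mathcal{G}_{n}$. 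Lemma~\ref{2.2} applied to $\mathcal{X}\times_{R}\mathcal{X}$ with this auxiliary data gives a morphism $\bigoplus_{i}\mathcal{G}_{n_{i}}[d_{i}]\to\mathcal{K}$ inducing surjections on cohomology sheaves, and precomposing with $\bigoplus_{i}(\mathcal{F}_{n_{i}}\boxtimes\mathcal{F}_{n_{i}})[d_{i}]\twoheadrightarrow\bigoplus_{i}\mathcal{G}_{n_{i}}[d_{i}]$ still induces surjections on cohomology, giving a morphism of the required shape (with $m_{i}=n_{i}$). Both refinements---$d_{i}=0$ when $\mathcal{K}$ is a sheaf, and finiteness of $I$ when $\mathcal{K}\in D^{b}_{coh}$---pass through this precomposition unchanged.
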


An immediate corollary of Proposition \ref{2.3} is the following resolution for coherent sheaves on $\mathcal{X}\times_{k}\mathcal{X}$.

\begin{corollary}\label{2.4}
Let $R$ be a ring. Let $\mathcal{X}$ be a quasi-compact and quasi-separated algebraic stack over $R$ with quasi-finite and separated diagonal. If $\mathcal{F}$ is a coherent sheaf on $\mathcal{X}\times_{R}\mathcal{X}$, then there exists a surjective morphism$$\mathcal{E}\boxtimes\mathcal{G}\to\mathcal{F},$$where $\mathcal{E}$ and $\mathcal{G}$ are coherent sheaves on $\mathcal{X}$.
\end{corollary}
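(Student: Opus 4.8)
The plan is to apply Proposition \ref{2.3} directly to the complex $\mathcal{K}=\mathcal{F}$ and then repackage the resulting finite external direct sum as a single external tensor product; the statement is essentially a formal consequence of Proposition \ref{2.3}.

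First I would invoke Proposition \ref{2.3} with $R$ and $\mathcal{X}$ as given and $\mathcal{K}=\mathcal{F}$. Since $\mathcal{F}$ is a coherent sheaf, it lies in $D^{b}_{coh}(\mathcal{X}\times_{R}\mathcal{X})$ and is concentrated in degree $0$, so the proposition provides a \emph{finite} index set $I$, integers $m_{i},n_{i}\geq 0$, and a morphism $\bigoplus_{i\in I}\mathcal{F}_{m_{i}}\boxtimes\mathcal{F}_{n_{i}}\to\mathcal{F}$ with $d_{i}=0$ for all $i$ whose induced map on cohomology sheaves is surjective. As both source and target are concentrated in degree $0$, this is just a surjection of coherent sheaves on $\mathcal{X}\times_{R}\mathcal{X}$: the $\mathcal{F}_{n}=\bar{\pi}_{*}\mathcal{I}^{n}$ are coherent on $\mathcal{X}$ by construction, and finite direct sums and external tensor products of coherent sheaves are again coherent.

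Next I would collapse the finite direct sum. Set $\mathcal{E}=\bigoplus_{i\in I}\mathcal{F}_{m_{i}}$ and $\mathcal{G}=\bigoplus_{i\in I}\mathcal{F}_{n_{i}}$, which are coherent on $\mathcal{X}$ because $I$ is finite. Since $(-)\boxtimes(-)=p_{1}^{*}(-)\otimes_{\mathcal{O}}p_{2}^{*}(-)$ and both $p_{j}^{*}$ and $\otimes_{\mathcal{O}}$ commute with direct sums, there is a canonical identification $\mathcal{E}\boxtimes\mathcal{G}\cong\bigoplus_{(i,j)\in I\times I}\mathcal{F}_{m_{i}}\boxtimes\mathcal{F}_{n_{j}}$, in which $\bigoplus_{i\in I}\mathcal{F}_{m_{i}}\boxtimes\mathcal{F}_{n_{i}}$ appears as the "diagonal" direct summand. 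Composing the split projection $\mathcal{E}\boxtimes\mathcal{G}\twoheadrightarrow\bigoplus_{i\in I}\mathcal{F}_{m_{i}}\boxtimes\mathcal{F}_{n_{i}}$ with the surjection of the previous step yields a surjective morphism $\mathcal{E}\boxtimes\mathcal{G}\to\mathcal{F}$, as desired.

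There is no serious obstacle here: all the substantive content is carried by Proposition \ref{2.3}, and the only point that deserves a line of explanation is that the external tensor product distributes over finite direct sums, which is exactly what lets us realize the finite sum $\bigoplus_{i}\mathcal{F}_{m_{i}}\boxtimes\mathcal{F}_{n_{i}}$ as a quotient of a single $\mathcal{E}\boxtimes\mathcal{G}$. Finiteness of $I$ (guaranteed by the last clause of Proposition \ref{2.3}) is essential, since it is what keeps $\mathcal{E}$ and $\mathcal{G}$ coherent.
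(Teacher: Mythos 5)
Your proof is correct and matches the paper's (unwritten) argument: the paper states Corollary~\ref{2.4} as an immediate consequence of Proposition~\ref{2.3}, and your repackaging of the finite direct sum $\bigoplus_{i\in I}\mathcal{F}_{m_i}\boxtimes\mathcal{F}_{n_i}$ as the diagonal summand of $\bigl(\bigoplus_i\mathcal{F}_{m_i}\bigr)\boxtimes\bigl(\bigoplus_j\mathcal{F}_{n_j}\bigr)$ is exactly the routine step the paper leaves implicit. You also correctly flag why finiteness of $I$ is the load-bearing hypothesis for coherence of $\mathcal{E}$ and $\mathcal{G}$.
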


We conclude this section with the following technical lemma, which we will use in the proof of our main theorem.

\begin{lemma}[cf.\ {\cite[Lemma\ 1.3.4]{olander_2022}}]\label{2.5}
Let $\mathcal{X}$ be an integral and proper algebraic stack over a field $k$ with finite diagonal. Let $\mathcal{G}$ be a coherent sheaf on $\mathcal{X}$. If $\mathcal{X}$ has strictly positive dimension, then$$\operatorname{Hom}_{\mathcal{O}_{\mathcal{X}}}(\mathcal{G},\mathcal{F}_{n})=0$$for any $n\gg 0$.
\end{lemma}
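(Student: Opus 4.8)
The plan is to transport the statement to the finite cover $\bar\pi\colon\bar U\to\mathcal X$ and then play off the vanishing $\bigcap_n\mathfrak m^n=0$ in Noetherian local rings against the positivity of $\dim\mathcal X$.

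First I would rewrite the Hom groups. Since $\bar\pi$ is finite, hence affine, adjunction gives $\operatorname{Hom}_{\mathcal O_{\mathcal X}}(\mathcal G,\mathcal F_n)=\operatorname{Hom}_{\mathcal O_{\mathcal X}}(\mathcal G,\bar\pi_*\mathcal I^n)\cong\operatorname{Hom}_{\mathcal O_{\bar U}}(\bar\pi^*\mathcal G,\mathcal I^n)$. As $\bar U$ is finite over $\mathcal X$ it is proper over $k$, so each of these spaces is finite-dimensional over $k$, and the inclusions $\mathcal I^{n+1}\hookrightarrow\mathcal I^n$ induce injections $\operatorname{Hom}_{\mathcal O_{\bar U}}(\bar\pi^*\mathcal G,\mathcal I^{n+1})\hookrightarrow\operatorname{Hom}_{\mathcal O_{\bar U}}(\bar\pi^*\mathcal G,\mathcal I^n)$. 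A descending chain of finite-dimensional vector spaces stabilizes, so there is an $N$ for which the transition maps are isomorphisms for all $n\geq N$, and it suffices to prove $\operatorname{Hom}_{\mathcal O_{\bar U}}(\bar\pi^*\mathcal G,\mathcal I^N)=0$.

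The heart of the argument is a contradiction. Suppose $\varphi\colon\bar\pi^*\mathcal G\to\mathcal I^N$ is nonzero; composing with $\mathcal I^N\hookrightarrow\mathcal O_{\bar U}$ and taking the image yields a nonzero coherent ideal sheaf $\mathcal J\subseteq\mathcal O_{\bar U}$, and by the choice of $N$ this map factors through $\mathcal I^n\hookrightarrow\mathcal O_{\bar U}$ for every $n$, so $\mathcal J\subseteq\mathcal I^n$ for all $n$. At a point $z$ of $Z=\bar U\setminus U$ we have $\mathcal I_z\subseteq\mathfrak m_z$, hence $\mathcal J_z\subseteq\bigcap_n\mathcal I_z^{\,n}\subseteq\bigcap_n\mathfrak m_z^{\,n}=0$ by Krull's intersection theorem in the Noetherian local ring $\mathcal O_{\bar U,z}$; therefore $\operatorname{Supp}(\mathcal J)\subseteq U$. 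Now — after replacing $\bar U$ by the scheme-theoretic closure of $U$, which is still finite over $\mathcal X$ and contains $U$ as a dense open (we may assume this was the chosen $\bar U$ in Section~\ref{2}) — every associated point of $\bar U$ lies in $U$. Since $\mathcal J\neq 0$ it has nonzero stalk at some associated point $\eta$ of $\bar U$, which thus lies in $U$ and is an associated point of $U$; because $\pi\colon U\to\mathcal X$ is flat and $\mathcal X$ is integral, $\pi(\eta)$ is the generic point $\xi$ of $\mathcal X$. Let $W=\overline{\{\eta\}}$ with its reduced structure and $g\colon W\to\mathcal X$ the induced morphism: $W$ is a closed subscheme of $\bar U$, hence finite over $\mathcal X$; it lies in $U$, hence is affine; and $g(W)$ is closed and contains $\xi$, so $g$ is surjective. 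Then $g_*\mathcal O_W$ is coherent on the proper stack $\mathcal X$, so $\Gamma(W,\mathcal O_W)=\Gamma(\mathcal X,g_*\mathcal O_W)$ is a finite-dimensional $k$-algebra; as $W$ is affine this forces $\dim W=0$, and since $g$ is finite and surjective $\dim\mathcal X\leq\dim W=0$, contradicting $\dim\mathcal X>0$. Hence $\mathcal J=0$, so $\varphi=0$, and $\operatorname{Hom}_{\mathcal O_{\mathcal X}}(\mathcal G,\mathcal F_n)=0$ for all $n\geq N$.

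I expect the last paragraph to be the main obstacle: converting the support condition $\operatorname{Supp}(\mathcal J)\subseteq U$ into a genuine contradiction requires producing an associated point of $\bar U$ inside $U$ lying over the generic point of $\mathcal X$ — this is where flatness of $\pi$, integrality of $\mathcal X$, and the passage to the scheme-theoretic closure of $U$ all enter — and then knowing that a proper stack admitting a finite surjection from an affine scheme is zero-dimensional, which rests on finiteness of coherent cohomology on proper stacks. The rest is formal.
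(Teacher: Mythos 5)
Your proof is correct, and it diverges from the paper's in a meaningful way at the final step. Both arguments agree up to the point where the image sheaf (your $\mathcal J$, the paper's $\mathcal H$) is shown to be a nonzero coherent subsheaf of $\bigcap_n\mathcal I^n$ with support contained in $U$, using the finite‑dimensionality of the Hom groups and Krull's intersection theorem at the points of $\bar U\setminus U$. After that the two proofs part ways. The paper stays local: since $\operatorname{Supp}(\mathcal H)$ is both closed in the proper $\bar U$ and closed in the affine $U$, it is finite over $k$, hence a finite set of closed points $u$; then $\mathcal H_u$ is a nonzero $\mathfrak m_u$\nobreakdash-power torsion submodule of $\mathcal O_{U,u}$, which is impossible because $U$ is reduced and positive\nobreakdash-dimensional at $u$, so $\mathcal O_{U,u}$ has depth $\geq 1$. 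You instead go global: normalizing $\bar U$ to be the scheme‑theoretic closure of $U$, you find an associated point $\eta$ of $\bar U$ in $\operatorname{Supp}(\mathcal J)\subseteq U$, use flatness of $\pi$ and integrality of $\mathcal X$ to see that $\eta$ lies over the generic point, and take $W=\overline{\{\eta\}}$, which is simultaneously closed in $\bar U$ (hence finite over $\mathcal X$, hence proper) and closed in $U$ (hence affine); properness over $k$ forces $\dim W=0$, and surjectivity of the finite $W\to\mathcal X$ gives $\dim\mathcal X=0$, a contradiction. Both routes are sound. The paper's is shorter and avoids the scheme‑theoretic‑closure normalization and the associated‑points/flatness lemma, replacing them with the single observation that $U$ is $(S_1)$ at its closed points; your route, in exchange, makes explicit exactly how the support condition interacts with properness to contradict positive dimension, and it sidesteps any reliance on depth and on the (implicitly used) fact that $U$ is reduced. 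One small point worth flagging: when you invoke the replacement of $\bar U$ by the scheme‑theoretic closure of $U$, you should note that this is permissible because the only properties of $\bar U$ used in Section~\ref{2} are that $U\hookrightarrow\bar U$ is an open immersion and $\bar\pi$ is finite, both of which survive passing to the closure; the sheaves $\mathcal F_n$ do change under this replacement, so the lemma is really being proved for that normalized choice, which is consistent with how the paper sets things up via Zariski's main theorem.
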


\begin{proof}
By assumption, we know that $U$ is reduced and the dimension of the local ring at closed points of $U$ is at least 1. This tells us that $U$ has depth at least 1 at every closed point. By construction, $$\operatorname{Hom}_{\mathcal{O}_{\mathcal{X}}}(\mathcal{G},\mathcal{F}_{n})=\operatorname{Hom}_{\mathcal{O}_{\bar{U}}}(\bar{\pi}^{*}\mathcal{G},\mathcal{I}^{n}).$$Since $\mathcal{X}$ is proper, we know that $\{\operatorname{Hom}_{\mathcal{O}_{\bar{U}}}(\bar{\pi}^{*}\mathcal{G},\mathcal{I}^{n})\}_{n\in\mathbb{N}}$ forms a descending chain of finite dimensional vector spaces over $k$. We know that$$\cap_{n\in\mathbb{N}}\operatorname{Hom}_{\mathcal{O}_{\bar{U}}}(\bar{\pi}^{*}\mathcal{G},\mathcal{I}^{n})\cong\operatorname{Hom}_{\mathcal{O}_{\bar{U}}}(\bar{\pi}^{*}\mathcal{G},\cap_{n\in\mathbb{N}}\mathcal{I}^{n}).$$It suffices to show the right-hand side is zero. Let $s\in\operatorname{Hom}_{\mathcal{O}_{\bar{U}}}(\bar{\pi}^{*}\mathcal{G},\cap_{n\in\mathbb{N}}\mathcal{I}^{n})$. Suppose, for a contradiction, that $s\neq 0$. Let $\mathcal{H}$ be the image of $s$ in $\cap_{n\in\mathbb{N}}\mathcal{I}^{n}$. Then $\mathcal{H}$ is a nonzero coherent sheaf on $\bar{U}$. By Krull's intersection theorem, the support of $\mathcal{H}$ is contained in $U$ and is thus affine over $k$. Since $\mathcal{X}$ is proper over $k$, so is $\operatorname{Supp}(\mathcal{H})$. It follows that $\operatorname{Supp}(\mathcal{H})$ is finite over $k$. Therefore it consists of finitely many closed points. Let $u$ be one of those closed points. Then $\mathcal{O}_{U,u}$ has a submodule $\mathcal{H}_{u}$. This contradicts the assumption that $\mathcal{O}_{U,u}$ has depth at least 1. Therefore $s=0$. It follows that$$\operatorname{Hom}_{\mathcal{O}_{\bar{U}}}(\bar{\pi}^{*}\mathcal{G},\cap_{n\in\mathbb{N}}\mathcal{I}^{n})=0$$as desired.
\end{proof}

\section{Functors between categories of coherent sheaves}\label{3}

In this section, we establish Theorem \ref{1.2}. Results in this section are essentially stacky variants of those in \cite[\href{https://stacks.math.columbia.edu/tag/0FZK}{Tag 0FZK}]{stacks-project}. We used several technical results in this section, which we prove in appendix \ref{A}. Let $F\colon\operatorname{QCoh}(\mathcal{X})\to\operatorname{QCoh}(\mathcal{Y})$ be an $R$-linear exact functor that commutes with arbitrary direct sums. We first show, with some reasonable assumptions on $\mathcal{X}$, that the corresponding quasi-coherent sheaf $\mathcal{K}$ admits a surjection from $\mathcal{F}\boxtimes F(\mathcal{O}_{\mathcal{X}})$, where $\mathcal{F}$ is a coherent sheaf on $\mathcal{X}$. We first remark that if $\mathcal{X}$ is an algebraic space that is separated and flat over $R$, then the proof of \cite[\href{https://stacks.math.columbia.edu/tag/0FZJ}{Tag 0FZJ}]{stacks-project} is still valid. In this case, it can be arranged that $\mathcal{F}=\mathcal{O}_{\mathcal{X}}$. Unfortunately, the same argument fails for stacks as the diagonal of a separated algebraic stack is only proper.

\begin{lemma}\label{3.1}
Let $R$ be a ring. Let $\mathcal{X}$ and $\mathcal{Y}$ be two algebraic stacks over $R$. Let $F\colon\operatorname{QCoh}(\mathcal{X})\to\operatorname{QCoh}(\mathcal{Y})$ be an $R$-linear exact functor that commutes with arbitrary direct sums. Let $\mathcal{K}$ be the object in $\operatorname{QCoh}(\mathcal{X}\times_{R}\mathcal{Y})$ corresponding to $F$. Suppose that $\mathcal{X}$ is tame and flat over $R$ with finite diagonal. Then there exists a surjective morphism$$\mathcal{E}\boxtimes F(\mathcal{G})\to\mathcal{K}$$for some coherent sheaf $\mathcal{E}$ and $\mathcal{G}$ on $\mathcal{X}$.
\end{lemma}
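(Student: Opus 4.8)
The plan is to exhibit $\mathcal{K}$ as the value, on the structure sheaf of the diagonal, of a relativized version of $F$, and then to push through it the resolution of the diagonal from Section~\ref{2}.

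First I would use the technical results of Appendix~\ref{A} to describe $\mathcal{K}$ concretely. Since $\mathcal{X}$ is flat over $R$, the functor $F$ has a relativization, an $R$-linear exact functor $F_{\mathcal{X}}\colon\operatorname{QCoh}(\mathcal{X}\times_{R}\mathcal{X})\to\operatorname{QCoh}(\mathcal{X}\times_{R}\mathcal{Y})$ commuting with direct sums, given by ``applying $F$ in the second factor relative to the first''. The facts I would record about it are: $\mathcal{K}\cong F_{\mathcal{X}}(\Delta_{*}\mathcal{O}_{\mathcal{X}})$, where $\Delta\colon\mathcal{X}\to\mathcal{X}\times_{R}\mathcal{X}$ is the diagonal; $F_{\mathcal{X}}$ is linear over $\operatorname{QCoh}(\mathcal{X})$ acting through the first projection, so that $F_{\mathcal{X}}(\operatorname{pr}_{1}^{*}\mathcal{E}\otimes\mathcal{M})\cong p_{1}^{*}\mathcal{E}\otimes F_{\mathcal{X}}(\mathcal{M})$ naturally; and $F_{\mathcal{X}}$ carries $\operatorname{pr}_{2}^{*}\mathcal{G}$ to $p_{2}^{*}F(\mathcal{G})$. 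Here $\operatorname{pr}_{1},\operatorname{pr}_{2}$ and $p_{1},p_{2}$ denote the two projections of $\mathcal{X}\times_{R}\mathcal{X}$ and of $\mathcal{X}\times_{R}\mathcal{Y}$ respectively.

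Granting this, the lemma follows in three short steps. Since $\mathcal{X}$ has finite diagonal, $\Delta$ is finite, so $\Delta_{*}\mathcal{O}_{\mathcal{X}}$ is coherent on $\mathcal{X}\times_{R}\mathcal{X}$, and finite diagonal also implies quasi-finite and separated diagonal; hence Corollary~\ref{2.4} produces a surjection $\mathcal{E}\boxtimes\mathcal{G}\twoheadrightarrow\Delta_{*}\mathcal{O}_{\mathcal{X}}$ with $\mathcal{E},\mathcal{G}$ coherent on $\mathcal{X}$. Applying the exact, hence right exact, functor $F_{\mathcal{X}}$ gives a surjection $F_{\mathcal{X}}(\mathcal{E}\boxtimes\mathcal{G})\twoheadrightarrow F_{\mathcal{X}}(\Delta_{*}\mathcal{O}_{\mathcal{X}})\cong\mathcal{K}$. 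Finally, $\mathcal{E}\boxtimes\mathcal{G}=\operatorname{pr}_{1}^{*}\mathcal{E}\otimes\operatorname{pr}_{2}^{*}\mathcal{G}$, and the two compatibilities above give $F_{\mathcal{X}}(\operatorname{pr}_{1}^{*}\mathcal{E}\otimes\operatorname{pr}_{2}^{*}\mathcal{G})\cong p_{1}^{*}\mathcal{E}\otimes p_{2}^{*}F(\mathcal{G})=\mathcal{E}\boxtimes F(\mathcal{G})$, which is the surjection we want.

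The real content, and the step I expect to be the main obstacle, is the construction and good behaviour of $F_{\mathcal{X}}$, not the deduction above: one must show that an exact cocontinuous $R$-linear functor on $\operatorname{QCoh}(\mathcal{X})$ extends to an exact cocontinuous functor relative to the base $\mathcal{X}$ that is $\operatorname{QCoh}(\mathcal{X})$-linear. This is where the standing hypotheses are used -- flatness of $\mathcal{X}$ over $R$ to control the base change of $\operatorname{QCoh}(\mathcal{X})$ along $\mathcal{X}\to\operatorname{Spec}R$, and tameness of $\mathcal{X}$ to keep $\operatorname{QCoh}(\mathcal{X}\times_{R}\mathcal{X})$ tied closely enough to $\operatorname{QCoh}(\mathcal{X})$ for the extension to exist and remain exact. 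I would isolate these statements as the technical lemmas of Appendix~\ref{A} and invoke them here.
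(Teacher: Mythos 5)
Your overall plan matches the paper's: resolve the diagonal via Corollary~\ref{2.4}, apply a relativized version of $F$, and identify the source of the resulting surjection with $\mathcal{E}\boxtimes F(\mathcal{G})$. The three ``facts'' you record about $F_{\mathcal{X}}$ are precisely the computations the paper carries out, so the deduction at the end is fine once those facts are in hand.

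The genuine gap is exactly where you flagged it, but you attribute its resolution to the wrong place. Appendix~\ref{A} does not construct a relativization of $F$; it only produces the kernel $\mathcal{K}$ on $\mathcal{X}\times_{R}\mathcal{Y}$, flat over $\mathcal{X}$ and with vanishing higher direct images along $p_{2}$. For a general $R$-linear exact cocontinuous $F$, ``applying $F$ in the second factor relative to the first'' is not a defined operation, and there is no abstract extension theorem being invoked. What the paper does instead is make $F_{\mathcal{X}}$ explicit as the Fourier--Mukai functor $(p_{13})_{*}\bigl(p_{12}^{*}(-)\otimes p_{23}^{*}\mathcal{K}\bigr)\colon\operatorname{QCoh}(\mathcal{X}\times_{R}\mathcal{X})\to\operatorname{QCoh}(\mathcal{X}\times_{R}\mathcal{Y})$, and then proves your three asserted facts for this concrete functor. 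Tameness is used to make $\mathcal{X}\to\operatorname{Spec}R$, and hence $p_{13}$, concentrated, so that flat base change of the vanishing of $R^{i}p_{2,*}(p_{1}^{*}(-)\otimes\mathcal{K})$ kills the higher $R^{i}p_{13,*}$ and $F_{\mathcal{X}}$ is exact; flatness of $\mathcal{X}/R$ and of $\mathcal{K}$ over $\mathcal{X}$ handle the tensor factors. The identification $F_{\mathcal{X}}(\mathcal{O}_{\Delta_{\mathcal{X}}})\cong\mathcal{K}$ is a direct adjunction chase through $\Delta_{\mathcal{X}}\times_{R}\mathcal{Y}\to\mathcal{X}\times_{R}\mathcal{X}\times_{R}\mathcal{Y}$, and $F_{\mathcal{X}}(\mathcal{E}\boxtimes\mathcal{G})\cong\mathcal{E}\boxtimes F(\mathcal{G})$ follows from the projection formula for concentrated morphisms \cite[Corollary~4.12]{HR17} and flat base change along the last projection. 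Your write-up should either supply this explicit construction or cite an actual statement doing so; as it stands, the crux of the lemma is asserted rather than proved.
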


\begin{proof}
We first note that $\mathcal{X}$ is tame over $R$. Then the structure morphism $\mathcal{X}\to\operatorname{Spec}R$ is concentrated by \cite[Lemma\ 2.5(5)]{HR17}. It follows that the projection $p_{13}\colon\mathcal{X}\times_{R}\mathcal{X}\times_{R}\mathcal{Y}\to\mathcal{X}\times_{R}\mathcal{Y}$ is also concentrated by \cite[Lemma\ 2.5(1)]{HR17}. By Corollary \ref{2.4}, there exists coherent sheaves $\mathcal{E},\mathcal{G}$ on $\mathcal{X}$ with a surjective map $f\colon\mathcal{E}\boxtimes\mathcal{G}\to\mathcal{O}_{\Delta_{\mathcal{X}}}$, where $\mathcal{O}_{\Delta_{\mathcal{X}}}$ is the structure sheaf of the diagonal. Consider the functor$$(p_{13})_{*}(p_{12}^{*}(-)\otimes p_{23}^{*}\mathcal{K})\colon\operatorname{QCoh}(\mathcal{X}\times_{R}\mathcal{X}\times_{R}\mathcal{Y})\to\operatorname{QCoh}(\mathcal{X}\times_{R}\mathcal{Y}).$$We first show that this functor is exact. Since $\mathcal{X}$ is flat over $R$ and $\mathcal{K}$ is flat over $\mathcal{X}$, we only have to worry about $(p_{13})_{*}$. Applying \cite[\href{https://stacks.math.columbia.edu/tag/0FZI}{Tag 0FZI}]{stacks-project} after a flat base change gives us $R^{i}(p_{13})_{*}(p_{12}^{*}\mathcal{F}\otimes p_{23}^{*}\mathcal{K})=0$ for all $i>0$. This shows that our functor is exact.

Now we apply this functor to $f$, which gives us the following surjective morphism$$p_{13,*}(p_{12}^{*}(\mathcal{E}\boxtimes\mathcal{G})\otimes p_{23}^{*}\mathcal{K})\to p_{13,*}(p_{12}^{*}\mathcal{O}_{\Delta_{\mathcal{X}}}\otimes p_{23}^{*}\mathcal{K})$$in $\operatorname{QCoh}(\mathcal{X}\times_{R}\mathcal{Y})$. For the target of this morphism, we have
\begin{align*}
    p_{13,*}(p_{12}^{*}\mathcal{O}_{\Delta_{\mathcal{X}}}\otimes p_{23}^{*}\mathcal{K})&= p_{13,*}(p_{12}^{*}\Delta_{\mathcal{X},*}\mathcal{O}_{\mathcal{X}}\otimes p_{23}^{*}\mathcal{K})\\
    &\cong p_{13,*}((\Delta_{\mathcal{X}}\times_{R}id_{\mathcal{Y}})_{*}\mathcal{O}_{\mathcal{X}\times_{R}\mathcal{Y}}\otimes p_{23}^{*}\mathcal{K})\\
    &\cong p_{13,*}(\Delta_{\mathcal{X}}\times_{R}id_{\mathcal{Y}})_{*}((\Delta_{\mathcal{X}}\times_{R}id_{\mathcal{Y}})^{*}p_{23}^{*}\mathcal{K})\\
    &\cong p_{13,*}(\Delta_{\mathcal{X}}\times_{R}id_{\mathcal{Y}})_{*}\mathcal{K}\\
    &\cong\mathcal{K},
\end{align*}
where $\Delta_{\mathcal{X}}$ is the diagonal morphism of $\mathcal{X}$. For the source of this morphism, we have
\begin{align*}
    p_{13,*}(p_{12}^{*}(\mathcal{E}\boxtimes\mathcal{G})\otimes p_{23}^{*}\mathcal{K})&\cong p_{13,*}(p_{12}^{*}p_{1}^{*}\mathcal{E}\otimes p_{12}^{*}p_{2}^{*}\mathcal{G}\otimes p_{23}^{*}\mathcal{K})\\
    &\cong p_{13,*}(p_{13}^{*}p_{1}^{*}\mathcal{E}\otimes p_{23}^{*}(p_{2}^{*}\mathcal{G}\otimes\mathcal{K}))\\
    &\simeq Rp_{13,*}(p_{13}^{*}p_{1}^{*}\mathcal{E}\otimes p_{23}^{*}(p_{2}^{*}\mathcal{G}\otimes\mathcal{K}))\\
    &\simeq p_{1}^{*}\mathcal{E}\otimes Rp_{13,*}p_{23}^{*}(p_{2}^{*}\mathcal{G}\otimes\mathcal{K})\ \ (\text{\cite[Corollary\ 4.12]{HR17}})\\
    &\simeq p_{1}^{*}\mathcal{E}\otimes p_{13,*}p_{23}^{*}(p_{2}^{*}\mathcal{G}\otimes\mathcal{K})\\
    &\simeq p_{1}^{*}\mathcal{E}\otimes p_{3}^{*}p_{3,*}(p_{2}^{*}\mathcal{G}\otimes\mathcal{K})\\
    &\simeq p_{1}^{*}\mathcal{E}\otimes p_{3}^{*}F(\mathcal{G})\\
    &\simeq\mathcal{E}\boxtimes F(\mathcal{G}).
\end{align*}
This finishes the proof.
\end{proof}

See \cite[Definition\ 2.4]{HR17} for the precise definition of concentrated morphisms of algebraic stacks. Lemma \ref{3.1} can be regarded as a generalization of \cite[\href{https://stacks.math.columbia.edu/tag/0FZJ}{Tag 0FZJ}]{stacks-project} since any quasi-compact quasi-separated morphism of algebraic spaces is concentrated \cite[Lemma\ 2.5(3)]{HR17}. The following lemma states that any functor between categories of coherent sheaves on algebraic stacks can be extended to one between the corresponding categories of quasi-coherent sheaves.

\begin{lemma}[cf.\ {\cite[\href{https://stacks.math.columbia.edu/tag/0FZL}{Tag 0FZL}]{stacks-project}}]\label{3.2}
Let $\mathcal{X},\mathcal{Y}$ be Noetherian algebraic stacks. Let $F\colon\operatorname{Coh}(\mathcal{X})\to\operatorname{Coh}(\mathcal{Y})$ be an additive functor. There exists a unique extension $\bar{F}\colon\operatorname{QCoh}(\mathcal{X})\to\operatorname{QCoh}(\mathcal{Y})$ of $F$ that commutes with arbitrary direct sums. In particular, if $F$ is flat, so is $\bar{F}$. 
\end{lemma}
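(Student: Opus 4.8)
The plan is to follow the classical construction over schemes, namely \cite[\href{https://stacks.math.columbia.edu/tag/0FZL}{Tag 0FZL}]{stacks-project}, the only genuinely new point being a foundational input about quasi-coherent sheaves on Noetherian algebraic stacks. First I would record that on a Noetherian algebraic stack $\mathcal{Z}$ every quasi-coherent sheaf $\mathcal{M}$ is the directed union of its coherent subsheaves, so that $\operatorname{QCoh}(\mathcal{Z})$ is a locally Noetherian Grothendieck abelian category and is the ind-completion of $\operatorname{Coh}(\mathcal{Z})$; this is precisely where the Noetherian hypothesis enters, and it is one of the facts collected in Appendix \ref{A} (or is standard). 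Granting this, I would set $\bar{F}(\mathcal{M}):=\operatorname{colim}_{\mathcal{F}\subseteq\mathcal{M}}F(\mathcal{F})$, the colimit over the directed poset of coherent subsheaves $\mathcal{F}$ of $\mathcal{M}$, which makes sense because $\operatorname{QCoh}(\mathcal{Y})$ is cocomplete. On a morphism $\varphi\colon\mathcal{M}\to\mathcal{N}$, each coherent $\mathcal{F}\subseteq\mathcal{M}$ has coherent image $\varphi(\mathcal{F})\subseteq\mathcal{N}$, and the composites $F(\mathcal{F})\to F(\varphi(\mathcal{F}))\to\bar{F}(\mathcal{N})$ are compatible and assemble to $\bar{F}(\varphi)$.

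Next come the routine verifications. Additivity of $\bar{F}$ and the identity $\bar{F}|_{\operatorname{Coh}}=F$ are immediate, since a coherent sheaf is the terminal object of its own poset of coherent subsheaves. That $\bar{F}$ commutes with arbitrary direct sums follows by combining additivity (which handles finite sums) with the observation that any coherent subsheaf of $\bigoplus_{k}\mathcal{N}_{k}$ lies in a finite partial sum, together with a cofinality argument identifying $\operatorname{colim}_{\mathcal{F}\subseteq\bigoplus_{k}\mathcal{N}_{k}}F(\mathcal{F})$ with $\bigoplus_{k}\bar{F}(\mathcal{N}_{k})$. For uniqueness, if $G$ is any extension of $F$ commuting with arbitrary direct sums, then the inclusions $\mathcal{F}\hookrightarrow\mathcal{M}$ give compatible maps $F(\mathcal{F})=G(\mathcal{F})\to G(\mathcal{M})$, hence a natural morphism $\bar{F}(\mathcal{M})\to G(\mathcal{M})$; one checks this is an isomorphism, which is the content of the universal property of the ind-completion and amounts to saying that a direct-sum-preserving additive functor out of $\operatorname{QCoh}$ of a Noetherian stack is already determined by its restriction to $\operatorname{Coh}$. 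Reconciling "commutes with direct sums" with this compatibility with the directed colimits presenting each quasi-coherent sheaf is the one step needing real care; for schemes it is exactly \cite[\href{https://stacks.math.columbia.edu/tag/0FZL}{Tag 0FZL}]{stacks-project}, and I would verify that that argument transports unchanged once the input of the first paragraph is in hand.

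Finally, for the flatness assertion (that $\bar{F}$ is exact when $F$ is): given a short exact sequence $0\to\mathcal{A}\to\mathcal{B}\to\mathcal{C}\to 0$ in $\operatorname{QCoh}(\mathcal{X})$, writing $\mathcal{B}=\operatorname{colim}_{i}\mathcal{B}_{i}$ over its coherent subsheaves and putting $\mathcal{A}_{i}=\mathcal{A}\cap\mathcal{B}_{i}$ and $\mathcal{C}_{i}=\operatorname{im}(\mathcal{B}_{i}\to\mathcal{C})$ exhibits the sequence as a filtered colimit of short exact sequences of coherent sheaves that cofinally compute $\bar{F}(\mathcal{A}),\bar{F}(\mathcal{B}),\bar{F}(\mathcal{C})$. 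If $F$ is exact, applying $F$ termwise preserves exactness, and since filtered colimits are exact in the Grothendieck category $\operatorname{QCoh}(\mathcal{Y})$, the colimit sequence $0\to\bar{F}(\mathcal{A})\to\bar{F}(\mathcal{B})\to\bar{F}(\mathcal{C})\to 0$ is exact. I expect the only real obstacle to be the stacky foundational statement of the first paragraph together with the uniqueness bookkeeping; the remainder is a direct transcription of the scheme-theoretic case.
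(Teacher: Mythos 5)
Your proposal is correct and takes essentially the same route as the paper: the paper's proof simply records the foundational input (that on a Noetherian algebraic stack every quasi-coherent sheaf is the filtered union of its coherent subsheaves) and then declares that the argument of \cite[\href{https://stacks.math.columbia.edu/tag/0FZL}{Tag 0FZL}]{stacks-project} carries over verbatim, which is precisely the construction $\bar{F}(\mathcal{M})=\operatorname{colim}_{\mathcal{F}\subseteq\mathcal{M}}F(\mathcal{F})$ you spell out together with the cofinality and exactness bookkeeping. You have just made explicit the steps the paper leaves implicit by its citation.
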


\begin{proof}
By assumption, every coherent sheaf on $\mathcal{X},\mathcal{Y}$ can be written as a sum of its coherent subsheaves. Then the argument of \cite[\href{https://stacks.math.columbia.edu/tag/0FZL}{Tag 0FZL}]{stacks-project} follows verbatim. 
\end{proof}

Let $\mathcal{X}$ be an algebraic stack. Let $x\colon \operatorname{Spec}l\to\mathcal{X}$ be a point of $\mathcal{X}$. If $\mathcal{X}$ is quasi-seperated, then there is a factorization$$\operatorname{Spec}l\xrightarrow{p}B_{l}G_{x}\xrightarrow{f}\mathcal{G}_{x}\xrightarrow{i}_{x}\mathcal{X},$$where $G_{x}\cong\operatorname{Aut}(x)$, the morphisms $p$ and $f$ are faithfully flat and $i_{x}$ is a quasi-affine monomorphism \cite[Theorem\ B.2]{quasifinite}. Note that the morphism $i_{x}\colon\mathcal{G}_{x}\to\mathcal{X}$ only depends on isomorphism classes of $x$. If $x$ is a closed point of $\mathcal{X}$, then $i_{x}$ is a closed immersion. We say $\mathcal{G}_{x}$ is the \textit{residual gerbe} of $\mathcal{X}$ at $x$. We set $\mathcal{O}_{x}=\overline{x}_{*}\mathcal{O}_{\operatorname{Spec}l}$. We would like to introduce the following technical lemmas.

\begin{lemma}\label{3.3}
Let $\mathcal{X}$ be an algebraic stack of finite type over $k$ with affine stabilizers. Let $x_{1}\cdots x_{n}$ be a finite sequence of closed points of $\mathcal{X}$. If $\mathcal{X}$ has quasi-finite diagonal, then there exists a single coherent sheaf $\mathcal{F}$ on $\mathcal{X}$ with a surjective morphism$$\mathcal{F}\to\bigoplus_{i=1}^{n}\mathcal{O}_{x_{i}}.$$
\end{lemma}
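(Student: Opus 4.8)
The plan is to reduce the statement to the single fact that, under the quasi-finite diagonal hypothesis, each point sheaf $\mathcal{O}_{x_{i}}$ is itself coherent on $\mathcal{X}$; granting this, a suitable $\mathcal{F}$ is produced by a direct sum. The first ingredient I would establish is that the stabilizers are finite. The inertia $I_{\mathcal{X}}=\mathcal{X}\times_{\mathcal{X}\times_{k}\mathcal{X}}\mathcal{X}$ maps to $\mathcal{X}$ by a base change of $\Delta_{\mathcal{X}}$, so $I_{\mathcal{X}}\to\mathcal{X}$ is quasi-finite. Writing $l_{i}=\kappa(x_{i})$, which is a finite extension of $k$ since $x_{i}$ is a closed point of a stack of finite type over $k$, the group scheme $G_{x_{i}}=\operatorname{Aut}(x_{i})$ is the fibre of $I_{\mathcal{X}}\to\mathcal{X}$ over $x_{i}$, hence quasi-finite over $l_{i}$; and a quasi-finite scheme over a field is finite. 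So each $G_{x_{i}}$ is a finite, hence affine, group scheme.

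The substantive step is the coherence of $\mathcal{O}_{x_{i}}$. By definition $\mathcal{O}_{x_{i}}$ is the pushforward of $\mathcal{O}_{\operatorname{Spec}l_{i}}$ along $\operatorname{Spec}l_{i}\to\mathcal{X}$, which by the factorization recalled above equals $i_{x_{i}}\circ q_{i}$, where $q_{i}$ is the composite $\operatorname{Spec}l_{i}\xrightarrow{p_{i}}B_{l_{i}}G_{x_{i}}\xrightarrow{f_{i}}\mathcal{G}_{x_{i}}$ and $i_{x_{i}}$ is a closed immersion (here using that $x_{i}$ is closed). So it suffices to show $q_{i}$ is finite. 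For this I would observe that $q_{i}$ is an fppf cover whose base change along itself, $\operatorname{Spec}l_{i}\times_{\mathcal{G}_{x_{i}}}\operatorname{Spec}l_{i}$, is a torsor under a form of the finite group scheme $G_{x_{i}}$, hence affine and finite over $\operatorname{Spec}l_{i}$; since finiteness is fppf-local on the target, $q_{i}$ is finite. (Equivalently, $q_{i}$ is representable, affine, quasi-finite, and proper.) Therefore $q_{i,*}\mathcal{O}_{\operatorname{Spec}l_{i}}$ is coherent on $\mathcal{G}_{x_{i}}$ and $\mathcal{O}_{x_{i}}=i_{x_{i},*}q_{i,*}\mathcal{O}_{\operatorname{Spec}l_{i}}$ is coherent on $\mathcal{X}$. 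This is precisely where the quasi-finite diagonal hypothesis enters: for $B\mathbb{G}_{m}$ the analogue $\mathcal{O}_{x}=\bigoplus_{n\in\mathbb{Z}}\mathcal{O}(n)$ fails to be coherent.

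It remains to assemble $\mathcal{F}$. The finite direct sum $\bigoplus_{i=1}^{n}\mathcal{O}_{x_{i}}$ is a single coherent sheaf, and the identity is the required surjection; alternatively, feeding $\bigoplus_{i=1}^{n}\mathcal{O}_{x_{i}}\in D^{b}_{coh}(\mathcal{X})$ into Lemma~\ref{2.2} produces a finite direct sum $\mathcal{F}=\bigoplus_{j}\mathcal{F}_{n_{j}}$ of the sheaves of Section~\ref{2} together with a surjection $\mathcal{F}\twoheadrightarrow\bigoplus_{i=1}^{n}\mathcal{O}_{x_{i}}$, which may be more convenient for later use. The only genuine obstacle is the coherence of the point sheaves established in the second paragraph; once that is in hand, the rest is formal.
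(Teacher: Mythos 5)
Your argument for the coherence of each $\mathcal{O}_{x_i}$ is correct, and it is indeed a fact the lemma relies on. Taking $\mathcal{F}=\bigoplus_{i=1}^n\mathcal{O}_{x_i}$ with the identity map then does satisfy the statement as literally written. However, this degenerate choice, and also the alternative extracted from Lemma~\ref{2.2}, both produce a sheaf $\mathcal{F}$ that depends on the given finite sequence $x_1,\dots,x_n$. What the paper's proof actually delivers, and what is used later in the proof of Theorem~\ref{1.2}, is a sheaf $\mathcal{F}$ that works simultaneously for \emph{every} finite collection of distinct closed points. The paper invokes \cite[Theorem~2.7]{EHKV01} to obtain a finite surjection $\pi\colon W\to\mathcal{X}$ from a \emph{scheme} $W$ (this is where the affine-stabilizer and quasi-finite-diagonal hypotheses are really spent) and sets $\mathcal{F}=\pi_*\mathcal{O}_W$. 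Because $W$ is a scheme, choosing distinct closed points $w_i\in W$ over the $x_i$ gives a surjection $\mathcal{O}_W\to\bigoplus_i\kappa(w_i)$; pushing forward along the finite, hence exact, $\pi_*$ and identifying $\pi_*\kappa(w_i)\cong\mathcal{O}_{x_i}$ yields the required surjection from an $\mathcal{F}$ that does not depend on the chosen points.

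This independence is not cosmetic. In the proof of Theorem~\ref{1.2}, one supposes an infinite sequence of distinct closed points $x_1,x_2,\dots$ lies in a fibre of the support of the kernel, and obtains a contradiction by comparing the fixed number of generators of $p^*F(\mathcal{F})$ against the unbounded number of nonzero summands $p^*F(\mathcal{O}_{x_i})$ it would have to surject onto. If $\mathcal{F}$ were allowed to change with $N$, as it does under both of your constructions, the number of generators of $p^*F(\mathcal{F})$ would change as well and no contradiction would follow. So the genuine content of the lemma, which your proof does not capture, is the existence of a uniform $\mathcal{F}$; the key input is the finite scheme cover from \cite{EHKV01}, not the finiteness (hence coherence) of the individual residual gerbe pushforwards, which both proofs establish along the way.
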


\begin{proof}
By \cite[Theorem\ 2.7]{EHKV01}, there exists a finite covering $\pi\colon W\to\mathcal{X}$ with $W$ a scheme. For any $i$, pick a closed point $u_{i}$ of $\pi^{-1}(x_{i})$. Then there exists a representative $\overline{x}_{i}\colon\operatorname{Spec}\kappa(u_{i})\to\mathcal{X}$ with a following factorization through the residual gerbe$$\operatorname{Spec}l_{i}\xrightarrow{p}\mathcal{G}_{x}\xhookrightarrow{j}\mathcal{X}$$where $p_{i}$ is flat and surjective and $j_{i}$ is a closed immersion. Then we have the following commutative diagram
\begin{equation*}
    \begin{tikzcd}
    & \pi^{-1}(\mathcal{G}_{x_{i}})\arrow[r,hookrightarrow,"j_{i}^{\prime}"]\arrow[d,"\pi_{i}"] & W\arrow[d,"\pi"]\\
    \operatorname{Spec}\kappa(u_{i})\arrow[ru,"\Tilde{p}_{i}"]\arrow[r,"p_{i}"] & \mathcal{G}_{x_{i}}\arrow[r,hookrightarrow,"j_{i}"] & \mathcal{X},
    \end{tikzcd}
\end{equation*}
where the right square is Cartesian. The composition $j_{i}^{\prime}\circ\Tilde{p}_{i}$ corresponds to a closed point $w_{i}$ of $W$. Since $i$ is arbitrary, we can produce a sequence of closed points $w_{1}\cdots w_{n}$ on $W$ as above. This gives us a surjective map$$\mathcal{O}_{W}\to\bigoplus_{i=1}^{n} j^\prime_{i,*}\Tilde{p}_{i,*}\kappa(u_{i})$$of coherent sheaves on $W$. Since $\pi$ is finite, $\pi_{*}$ is exact. Therefore we get a surjective map$$\pi_{*}\mathcal{O}_{W}\to\bigoplus_{i=1}^{n}\pi_{*}j^\prime_{i,*}\Tilde{p}_{i,*}\kappa(u_{i})$$of coherent sheaves on $\mathcal{X}$. Chasing the diagram gives$$\pi_{*}j^\prime_{i,*}\Tilde{p}_{i,*}\kappa(u_{i})\cong j_{i,*}p_{i,*}\kappa(u_{i})=\mathcal{O}_{x_{i}}.$$Setting $\mathcal{F}=\pi_{*}\mathcal{O}_{W}$ finishes the proof. 
\end{proof}

\begin{lemma}[cf.\ {\cite[\href{https://stacks.math.columbia.edu/tag/0FZM}{Tag 0FZM}]{stacks-project}}]\label{3.4}
Let $f\colon\mathcal{Z}\to\mathcal{Y}$ be a quasi-finite separated morphism of Noetherian algebraic stacks with finite relative inertia. Let $\mathcal{K}$ be a coherent sheaf on $\mathcal{Z}$ whose support is $\mathcal{Z}$ such that $f_{*}\mathcal{K}$ is coherent and $R^{i}f_{*}\mathcal{K}=0$ for all $i>0$. If $f$ is concentrated, then it is also proper.
\end{lemma}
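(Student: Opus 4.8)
The plan is to strip the stacky part off $f$ by passing to its relative coarse moduli space, reducing to a representable quasi-finite separated morphism of schemes, and there to run a resolution-of-the-diagonal argument as in Section~\ref{2} (and as in the scheme case \cite[\href{https://stacks.math.columbia.edu/tag/0FZM}{Tag 0FZM}]{stacks-project}). For the reduction: because $f$ is concentrated with finite relative inertia, each geometric fibre of $f$ is a gerbe banded by a finite linearly reductive group scheme, by the cohomological characterization of linear reductivity (cf.\ \cite{HR15,HR17}); hence $f$ is tame and admits a relative coarse moduli space, a factorization $\mathcal{Z}\xrightarrow{c}\mathcal{Z}'\xrightarrow{h}\mathcal{Y}$ in which $c$ is proper, surjective and cohomologically affine — so $c_{*}$ is exact, $c_{*}\mathcal{O}_{\mathcal{Z}}=\mathcal{O}_{\mathcal{Z}'}$ and $R^{i}c_{*}=0$ for $i>0$ (cf.\ \cite{tamestacks08}) — while $h$ is quasi-finite, separated and representable by algebraic spaces. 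Since $c$ is proper and surjective, $f$ is proper if and only if $h$ is. Setting $\mathcal{K}':=c_{*}\mathcal{K}$, the sheaf $\mathcal{K}'$ is coherent with full support on $\mathcal{Z}'$, and $Rf_{*}=Rh_{*}\circ c_{*}$ gives $Rh_{*}\mathcal{K}'\cong Rf_{*}\mathcal{K}$; hence $h_{*}\mathcal{K}'$ is coherent, $R^{i}h_{*}\mathcal{K}'=0$ for $i>0$, and $h$ is concentrated. So I may assume $f$ is representable; and then, since properness is smooth-local on the target and flat base change preserves $\operatorname{Supp}\mathcal{K}$ together with the coherence and higher vanishing of $f_{*}\mathcal{K}$, I may further assume $\mathcal{Y}$ is a Noetherian scheme.

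Now I would apply Zariski's main theorem to factor $f=\bar f\circ\iota$ with $\iota\colon Z\hookrightarrow\overline{Z}$ an open immersion and $\bar f\colon\overline{Z}\to Y$ finite (so $Z$ and $\overline{Z}$ are schemes), and replace $\overline{Z}$ by the scheme-theoretic closure of $Z$, so that $Z$ is dense in $\overline{Z}$. Suppose, for contradiction, that $W:=\overline{Z}\setminus Z$ is nonempty, with ideal sheaf $\mathcal{I}$, and fix a coherent extension $\overline{\mathcal{K}}$ of $\mathcal{K}$ to $\overline{Z}$ with no sections supported on $W$. Deligne's formula (the unpacked form of Lemma~\ref{2.1}) identifies $\iota_{*}\mathcal{K}$ with $\operatorname{colim}_{n}\sheafhom(\mathcal{I}^{n},\overline{\mathcal{K}})$, an increasing union of coherent subsheaves; applying the exact, affine, hence conservative functor $\bar f_{*}$ and using that $f_{*}\mathcal{K}=\operatorname{colim}_{n}\bar f_{*}\sheafhom(\mathcal{I}^{n},\overline{\mathcal{K}})$ is coherent, this chain stabilizes, so $\iota_{*}\mathcal{K}$ is coherent; similarly $R\bar f_{*}=\bar f_{*}$ and $R^{i}f_{*}\mathcal{K}=0$ force $R^{i}\iota_{*}\mathcal{K}=0$ for $i>0$. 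Thus $R\iota_{*}\mathcal{K}=\iota_{*}\mathcal{K}$ is a coherent sheaf concentrated in degree $0$, equivalently $R\underline{\Gamma}_{W}(\iota_{*}\mathcal{K})=0$.

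To conclude, I would localize at a minimal point $z$ of $W\cap\operatorname{Supp}(\iota_{*}\mathcal{K})$ (if this locus is empty we are done). There the ideal of $W$ in $\mathcal{O}_{\overline{Z},z}$ is primary to the maximal ideal, so $R\underline{\Gamma}_{W}(\iota_{*}\mathcal{K})_{z}$ is the ordinary local cohomology of the finite nonzero module $(\iota_{*}\mathcal{K})_{z}$, whose top local cohomology is nonzero by Grothendieck's non-vanishing theorem — contradicting the vanishing just obtained. Hence $\operatorname{Supp}(\iota_{*}\mathcal{K})\cap W=\emptyset$. But $\operatorname{Supp}(\iota_{*}\mathcal{K})\supseteq\operatorname{Supp}(\mathcal{K})=Z$ is dense in $\overline{Z}$, so $\operatorname{Supp}(\iota_{*}\mathcal{K})=\overline{Z}$ and therefore $W=\emptyset$, a contradiction. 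So $f=\bar f$ is finite, in particular proper.

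The hard part will be the first step: one must check that the hypotheses that $f$ is concentrated and has finite relative inertia really do produce a relative coarse moduli space whose structure map $c$ is cohomologically affine with vanishing higher direct images — this is exactly what carries the cohomological conditions on $\mathcal{K}$ over to $\mathcal{K}'$ — and that all the hypotheses, the full support of the pushforward in particular, are inherited by $h$, so that properness of $h$ is genuinely equivalent to the desired statement about $f$. Once that reduction is secured, the representable case is formal, using only the Deligne-type formula from Section~\ref{2} and standard facts from local cohomology.
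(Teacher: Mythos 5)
Your strategy parallels the paper's: pass to a coarse moduli space to strip off the stacky part, reduce to a representable quasi-finite separated morphism, then invoke the scheme-level version of the statement. Two differences in execution: you form the relative coarse moduli space of $\mathcal{Z}$ over $\mathcal{Y}$ and only then smooth-localize $\mathcal{Y}$, whereas the paper smooth-localizes $\mathcal{Y}$ first and applies Keel--Mori to the pulled-back $\mathcal{Z}_U\to U$ (the paper's order is cleaner because it sidesteps the existence of a relative coarse moduli space over an arbitrary stacky base, which in your order requires a descent argument); and you reprove the scheme-level statement \cite[\href{https://stacks.math.columbia.edu/tag/0FZM}{Tag 0FZM}]{stacks-project} from scratch via Deligne's formula and Grothendieck non-vanishing, which is correct but unnecessary when one can cite it directly.

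The substantive issue is exactly the one you flag as ``the hard part'' in your last paragraph, and it does not go away: the pushforward $\mathcal{K}'=c_*\mathcal{K}$ along a coarse moduli space map need not have full support even when $\mathcal{K}$ does, because $c_*$, while exact, is not conservative. Take $\mathcal{Z}=BG$ with $G$ a nontrivial finite linearly reductive group scheme and $\mathcal{K}$ a nontrivial irreducible representation: then $c_*\mathcal{K}=\mathcal{K}^G=0$. Once $\operatorname{Supp}(\mathcal{K}')\subsetneq\mathcal{Z}'$, the scheme-level result gives no information about properness of $h$ away from that support, and indeed with $\mathcal{Z}=(\mathbb{A}^1\setminus\{0\})\times BG$, $\mathcal{Y}=\mathbb{A}^1$, and $\mathcal{K}=\mathcal{O}\boxtimes V$ for $V$ a nontrivial irreducible, all hypotheses of the lemma as stated are satisfied with $Rf_*\mathcal{K}=0$, yet $f$ is not proper. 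The paper's own proof asserts the same full-support claim with only ``since $\phi_U$ is proper and surjective'' as justification, which is likewise insufficient (proper surjective maps of stacks do not in general send sheaves of full support to sheaves of full support). To close the gap one would need hypotheses that survive $c_*$; for instance, coherence and higher vanishing of $f_*(\mathcal{E}\otimes\mathcal{K})$ for every coherent $\mathcal{E}$ on $\mathcal{Z}$, which is what is actually available in the application to Theorem~\ref{1.2} and which rules out the example above.
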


\begin{proof}
Choose a smooth presentation $U\to\mathcal{Y}$. Then the pulled back morphism $f_{U}\colon\mathcal{Z}_{U}\to U$ is also quasi-finite separated with finite relative inertia. By Keel--Mori theorem, there exists an algebraic space $Z_{U}$ and a factorization $\mathcal{Z}_{U}\overset{\phi_{U}}{\to}Z_{U}\overset{\overline{f}_{U}}{\to}U$ such that $\phi_{U}$ is a proper and quasi-finite homeomorphism, and $\overline{f}_{U}$ is separated. Since $f_{U}$ is quasi-finite and $\phi_{U}$ is surjective, the morphism $\overline{f}_{U}$ is also quasi-finite. By Zariski's main theorem for algebraic spaces \cite[\href{https://stacks.math.columbia.edu/tag/082K}{Tag 082K}]{stacks-project}, we conclude that $Z_{U}$ is a scheme.

Let $\mathcal{K}^\prime$ be the coherent sheaf on $\mathcal{Z}_{U}$ obtained by pulling back $\mathcal{K}$ along the canonical projection. By assumption, the support of $\mathcal{K}^\prime$ is $\mathcal{Z}_{U}$. Since $f$ is concentrated, so is $f_{U}$, and $\phi_{U}$ is a good moduli space. It follows that $\phi_{U,*}$ is exact. Flat base change gives us$$R^{i}\overline{f}_{U,*}(\phi_{U,*}\mathcal{K}^\prime)=R^{i}f_{U,*}\mathcal{K}^\prime=0$$for all $i\geq 0$. Since $\phi_{U}$ is proper and surjective, $\phi_{U,*}\mathcal{K}^\prime$ is coherent, and the support of $\phi_{U,*}\mathcal{K}^\prime$ is $Z_{U}$. By \cite[\href{https://stacks.math.columbia.edu/tag/0FZM}{Tag 0FZM}]{stacks-project}, $\overline{f}_{U}$ is finite. Therefore $f_{U}$ is proper. Hence we may conclude that $f$ is proper.
\end{proof}

Now we are ready to prove Theorem \ref{1.2}.

\begin{proof}[Proof of Theorem \ref{1.2}]
Let $\mathcal{K}$ be a coherent sheaf as in Proposition \ref{1.2}\ref{1.3(2)}. The functor $F_{\mathcal{K}}$ is $k$-linear exact and sends $\operatorname{Coh}(\mathcal{X})$ to $\operatorname{Coh}(\mathcal{Y})$. So it suffices to find a quasi-inverse for this construction.

Let $F$ be a functor as in Theorem \ref{1.2}\ref{1.3(1)}. By Lemma \ref{A.3}, we may assume $F$ is a $k$-linear exact functor on the category of quasi-coherent sheaves that commute with arbitrary direct sums. By Proposition \ref{A.4}, there exists a quasi-coherent sheaf $\mathcal{K}$ on $\mathcal{X}\times_{k}\mathcal{Y}$ that is flat over $\mathcal{X}$ and  $R^{i}p_{2,*}(p_{1}^{*}(\mathcal{E})\otimes\mathcal{K})=0$ for all $i>0$ and for any coherent sheaf $\mathcal{E}$ on $\mathcal{X}$. By Lemma \ref{3.1}, there is a surjective map $\mathcal{E}\boxtimes F(\mathcal{G})\to\mathcal{K}$. It follows that $\mathcal{K}$ is coherent.

Now we show the support of $\mathcal{K}$ is proper and quasi-finite over $\mathcal{Y}$. Let $\mathcal{Z}$ be the support of $\mathcal{K}$. Let $y$ be a closed point of $\mathcal{Y}$. We claim that its fiber $\mathcal{Z}_{y}$ is discrete. Suppose, for a contradiction, that $\mathcal{Z}_{y}$ is not discrete. Then there exists an infinite sequence of pairwise distinct closed points $x_{1},x_{2},\cdots$ of $\mathcal{X}$ in the image of $\mathcal{Z}_{y}\to\mathcal{X}$. By Lemma \ref{3.3}, we have a surjective map$$\mathcal{F}\to\bigoplus_{i=1}^{N}\mathcal{O}_{x_{i}}$$for any positive integer $N$. Note that the sheaf $F(\mathcal{F})$ is coherent by assumption. Choose a presentation $p\colon\operatorname{Spec}(A)\to\mathcal{Y}$. Then we have a surjective map$$p^{*}F(\mathcal{F})\to\bigoplus_{i=1}^{N}p^{*}F(\mathcal{O}_{x_{i}})$$of coherent sheaves on $\operatorname{Spec}(A)$. This leads to a contradiction as we can choose $N$ to be larger than the number of generators of $p^{*}F(\mathcal{F})$ as a coherent $A$-module. This shows that $\mathcal{Z}_{y}$ is discrete, and thus $\mathcal{Z}$ is quasi-finite over $\mathcal{Y}$. Since $\mathcal{X}$ has finite diagonal, it is separated, and the morphism $\mathcal{X}\times\mathcal{Y}\to\mathcal{Y}$ is also separated. Therefore $\mathcal{Z}$ is quasi-finite and separated over $\mathcal{Y}$. By Lemma \ref{3.4}, $\mathcal{Z}\to\mathcal{Y}$ is also proper.
\end{proof}

Theorem \ref{1.2} can be regarded as a generalization of \cite[\href{https://stacks.math.columbia.edu/tag/0FZN}{Tag 0FZN}]{stacks-project}. Indeed, any finite morphism of algebraic stacks is proper and quasi-finite. However, the converse is not true without representability. Consider the classifying stack $BG$ where $G$ is a finite algebraic group over a field $k$. Then the structure morphism $BG\to\operatorname{Spec}k$ is proper and quasi-finite. But it is clearly not finite as $BG$ is not a scheme. We finish this section by verifying Theorem \ref{1.2} with the counterexample to Gabriel's theorem mentioned in the introduction.

\begin{example}\label{3.5}
Consider the classifying stack $B(\mathbb{Z}/3\mathbb{Z})$ over $\mathbb{C}$. Coherent sheaves on $B(\mathbb{Z}/3\mathbb{Z})$ are the same as finite dimensional complex representations of $\mathbb{Z}/3\mathbb{Z}$. It has three irreducible complex representations $V_{0}$, $V_{1}$, and $V_{2}$ corresponding to eigenvalues 1, $e^{\frac{2\pi i}{3}}$ and $e^{\frac{4\pi i}{3}}$ respectively. Note that $\mathbb{Z}/3\mathbb{Z}$ is linearly reductive, so the category $\operatorname{Coh}(B(\mathbb{Z}/3\mathbb{Z}))$ is semi-simple. Consider the functor $F\colon\operatorname{Coh}(B(\mathbb{Z}/3\mathbb{Z}))\to\operatorname{Coh}(B(\mathbb{Z}/3\mathbb{Z}))$ that fixes $V_{0}$ but exchanges $V_{1}$ and $V_{2}$. Clearly, $F$ is a well-defined exact functor. Consider the following sheaf$$K=(V_{0}^{\vee}\boxtimes V_{0})\oplus(V_{1}^{\vee}\boxtimes V_{2})\oplus(V_{2}^{\vee}\boxtimes V_{1}).$$Note that $K$ is coherent as it is a finite direct sum of coherent sheaves on $B(\mathbb{Z}/3\mathbb{Z})$. We claim that $F$ is given by the Fourier--Mukai transform with kernel $K$. Indeed, Schur's lemma gives us$$V_{i}\otimes V_{j}^{\vee}=
\begin{cases}
\mathbb{C} & i=j\\
0 & \text{otherwise}
\end{cases}$$for any $i,j\in\{0,1,2\}$. This tells us that $\Phi_{K}(V_{0})=V_{0}$, $\Phi_{K}(V_{1})=V_{2}$ and $\Phi_{K}(V_{2})=V_{1}$. It follows that $F\cong\Phi_{K}$.
\end{example}

In fact, the construction in Example \ref{3.5} works for any finite linearly reductive algebraic group $G$. If $G$ is infinite, we can still produce a Fourier--Mukai kernel for $F$ as above. However, it may not to be coherent. 

\begin{example}\label{3.6}
Consider the classifying stack $B\mathbb{G}_{m}$ over $\mathbb{C}$. As an algebraic group, $\mathbb{G}_{m}$ is also linearly reductive, and any irreducible representation of $\mathbb{G}_{m}$ is completely determined by a character $\mathbb{G}_{m}\to \mathbb{G}_{m}$. We write $V_{n}$ for the irreducible representation given by $t\to t^{n}$. Consider the functor $G\colon\operatorname{Coh}(B\mathbb{G}_{m})\to\operatorname{Coh}(B\mathbb{G}_{m})$ that fixes $V_{0}$ but exchanges $V_{-n}$ and $V_{n}$ for all $n>0$. In this case, $G$ is still a Fourier--Mukai transform. Consider the following sheaf$$K=\Bigg(\bigoplus_{i>0} (V_{i}^{\vee}\boxtimes V_{-i})\oplus(V_{-i}^{\vee}\boxtimes V_{i})\Bigg)\oplus(V_{0}^{\vee}\boxtimes V_{0}).$$This same argument in Example \ref{3.5} shows that $G$ is the Fourier--Mukai transform whose kernel is $K$. However, we notice that $K$ is no longer coherent as it is an infinite direct sum of coherent sheaves.
\end{example}

\section{Producing a kernel}\label{4}

In this section, we produce a candidate for the Fourier--Mukai kernel corresponding to a $k$-linear exact fully faithful functor $F\colon D^{b}_{coh}(\mathcal{X})\to D^{b}_{coh}(\mathcal{Y})$, and show that they agree on zero-dimensional sheaves on $\mathcal{X}$. Let $\mathcal{X},\mathcal{Y}$ be a smooth tame algebraic stack of finite type over $k$. It follows from \cite[Theorem\ B and C]{HR15} that both $\mathcal{X}$ and $\mathcal{Y}$ have finite cohomological dimensions. Assume further that $\mathcal{X}$ has finite diagonal. By Corollary \ref{2.4}, there exists a resolution of the diagonal of $\mathcal{X}$ as follows:
\begin{equation*}
    \begin{tikzcd}
    \cdots\arrow[r] & \mathcal{E}_{i}\boxtimes\mathcal{G}_{i}\arrow[r] & \cdots\arrow[r] & \mathcal{E}_{0}\boxtimes\mathcal{G}_{0}\arrow[r] & \mathcal{O}_{\Delta_{\mathcal{X}}}\arrow[r] & 0,
    \end{tikzcd}
\end{equation*}
where $\mathcal{E}_{i},\mathcal{G}_{i}$ are coherent sheaves on $\mathcal{X}$ for all $i$. Consider the following complex in $D^{b}_{coh}(\mathcal{X}\times_{k}\mathcal{X})$
\begin{equation}\label{eq4.1}
    \begin{tikzcd}
    \cdots\arrow[r] & \mathcal{E}_{i}\boxtimes\mathcal{G}_{i}\arrow[r] & \cdots\arrow[r] & \mathcal{E}_{0}\boxtimes\mathcal{G}_{0}.
    \end{tikzcd}
\end{equation}
For each $n$, we set $M_{n}$ to be the complex
\begin{equation*}
    \begin{tikzcd}
    \mathcal{E}_{n}\boxtimes\mathcal{G}_{n}\arrow[r] & \cdots\arrow[r] & \mathcal{E}_{0}\boxtimes\mathcal{G}_{0}
    \end{tikzcd}
\end{equation*}
obtained by taking the nth brutal truncation of the complex in \ref{eq4.1}. Then we have the following infinite Postnikov system
\begin{equation}\label{eq4.2}
    \begin{tikzcd}
   \cdots & M_{n}\arrow[dr]\arrow[l,dashed,"{[1]}"] & & \cdots\arrow[ll,dashed,"{[1]}"]\arrow[dr] & & M_{0}\arrow[ll,dashed,"{[1]}"]\arrow[dr,"\cong"]\\
    & \cdots\arrow[r] & \mathcal{E}_{n}\boxtimes\mathcal{G}_{n}\arrow[ur]\arrow[rr] & & \cdots\arrow[ur]\arrow[rr]& & \mathcal{E}_{0}\boxtimes\mathcal{G}_{0},
    \end{tikzcd}
\end{equation}
where the dashed arrow represents the shift in $D^{b}_{coh}(\mathcal{X}\times_{k}\mathcal{X})$. Set$$\mathcal{F}_{n}=\operatorname{Ker}(\mathcal{E}_{n}\boxtimes\mathcal{G}_{n}\to\mathcal{E}_{n-1}\boxtimes\mathcal{G}_{n-1})$$for all $n>0$. Then we have$$H^{i}(M_{n})=
\begin{cases}
\mathcal{O}_{\Delta_{\mathcal{X}}} & i=0\\
\mathcal{F}_{n} & i=-n\\
0 & i\neq 0,-n.
\end{cases}$$
Therefore we have $M_{n}\simeq\mathcal{O}_{\Delta_{\mathcal{X}}}\oplus\mathcal{F}_{n}[n]$ as complexes in $D^{b}_{coh}(\mathcal{X}\times_{k}\mathcal{X})$ for $n>\dim(\mathcal{X}\times_{k}\mathcal{X})$.

Now let $F\colon D^{b}_{coh}(\mathcal{X})\to D^{b}_{coh}(\mathcal{X})$ be a $k$-linear exact fully faithful functor. We wish to consider the complex
\begin{equation}\label{eq4.3}
    \begin{tikzcd}
    \cdots\arrow[r] & \mathcal{E}_{i}\boxtimes F(\mathcal{G}_{i})\arrow[r] & \cdots\arrow[r] & \mathcal{E}_{0}\boxtimes F(\mathcal{G}_{0}).
    \end{tikzcd}
\end{equation}
obtained by applying our functor to the second factor in the external tensor products in the complex in \ref{eq4.2}. But we need to check that it is a complex in $D^{b}_{coh}(\mathcal{X}\times_{k}\mathcal{Y})$. By the K\"unneth formula, we have
\begin{align*}
    \operatorname{Ext}_{\mathcal{O}_{\mathcal{X}\times_{k}\mathcal{Y}}}^{q}(\mathcal{E}_{i}\boxtimes F(\mathcal{G}_{i}),\mathcal{E}_{j}\boxtimes F(\mathcal{G}_{j}))&\simeq\bigoplus_{p\in\mathbb{Z}}\operatorname{Ext}_{\mathcal{O_{X}}}^{q+p}(\mathcal{E}_{i},\mathcal{E}_{j})\otimes_{k}\operatorname{Ext}_{\mathcal{O_{Y}}}^{-p}(F(\mathcal{G}_{i}),F(\mathcal{G}_{j}))\\
    &\simeq\bigoplus_{p\in\mathbb{Z}}\operatorname{Ext}_{\mathcal{O_{X}}}^{q+p}(\mathcal{E}_{i},\mathcal{E}_{j})\otimes_{k}\operatorname{Ext}_{\mathcal{O_{Y}}}^{-p}(\mathcal{G}_{i},\mathcal{G}_{j})\\
    &\simeq\operatorname{Ext}_{\mathcal{O_{X\times X}}}^{q}(\mathcal{E}_{i}\boxtimes \mathcal{G}_{i},\mathcal{E}_{j}\boxtimes \mathcal{G}_{j}).
\end{align*}
This shows that our complex in \ref{eq4.3} is indeed a complex in $D^{b}_{coh}(\mathcal{X}\times_{k}\mathcal{Y})$. Note that the second isomorphism holds because $F$ is fully faithful. Moreover, we have$$\operatorname{Ext}_{\mathcal{O}_{\mathcal{X}\times_{k}\mathcal{Y}}}^{q}(\mathcal{E}_{i}\boxtimes F(\mathcal{G}_{i}),\mathcal{E}_{j}\boxtimes F(\mathcal{G}_{j}))=0,\ q<0.$$By \cite[Lemma\ 2.1]{kawamata}, there exists an infinite Postnikov system
\begin{equation}\label{eq4.4}
    \begin{tikzcd}
   \ & K_{n}\arrow[dr]\arrow[l,dashed,"{[1]}"] & & \cdots\arrow[ll,dashed,"{[1]}"]\arrow[dr] & & K_{0}\arrow[ll,dashed,"{[1]}"]\arrow[dr,"\cong"]\\
    & \cdots\arrow[r] & \mathcal{E}_{n}\boxtimes F(\mathcal{G}_{n})\arrow[ur]\arrow[rr] & & \cdots\arrow[ur]\arrow[rr]& & \mathcal{E}_{0}\boxtimes F(\mathcal{G}_{0}).
    \end{tikzcd}
\end{equation}
We now work towards obtaining a direct sum decomposition for $K_{n}$ in $D^{b}_{coh}(\mathcal{X}\times_{k}\mathcal{Y})$ that is similar to the one for $M_{n}$ in $D^{b}_{coh}(\mathcal{X}\times_{k}\mathcal{X})$.

\begin{lemma}\label{4.1}
Let $\mathcal{X},\mathcal{Y}$ be smooth tame algebraic stacks of finite type over $k$. Let $F\colon D^{b}_{coh}(\mathcal{X})\to D^{b}_{coh}(\mathcal{Y})$ be a $k$-linear exact fully faithful functor. If $\mathcal{X}$ has finite diagonal, then there exists a positive integer $n$ and a coherent sheaf $\mathcal{K}_{n}$ such that$$\Phi_{K_{n}}^{\mathcal{X\to Y}}(\mathcal{F}))\simeq F(\mathcal{F})\oplus F(\mathcal{K}_{n})[n]$$for any zero dimensional sheaf $\mathcal{F}$ on $\mathcal{X}$.
\end{lemma}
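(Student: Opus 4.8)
The plan is to show that, for a zero–dimensional sheaf $\mathcal{F}$ on $\mathcal{X}$, the Fourier--Mukai transform with kernel $K_{n}$ applied to $\mathcal{F}$ is $F$ applied to the Fourier--Mukai transform with kernel $M_{n}$ applied to $\mathcal{F}$, and then to feed in the splitting $M_{n}\simeq\mathcal{O}_{\Delta_{\mathcal{X}}}\oplus\mathcal{F}_{n}[n]$ already established for $n>\dim(\mathcal{X}\times_{k}\mathcal{X})$.

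\smallskip
\noindent\emph{Step 1 (the local computation).} Fix $i$ and a coherent sheaf $\mathcal{F}$ on $\mathcal{X}$ with $\dim\operatorname{Supp}\mathcal{F}=0$. Since $\mathcal{X}$ is tame, the structure morphism $\mathcal{X}\to\operatorname{Spec}k$ is concentrated (cf.\ the proof of Lemma \ref{3.1}), so the projection formula together with flat base change along $\mathcal{Y}\to\operatorname{Spec}k$ gives
\[
\Phi_{\mathcal{E}_{i}\boxtimes F(\mathcal{G}_{i})}^{\mathcal{X}\to\mathcal{Y}}(\mathcal{F})\simeq R\Gamma\big(\mathcal{X},\mathcal{F}\otimes^{\mathbb{L}}\mathcal{E}_{i}\big)\otimes_{k}F(\mathcal{G}_{i}),
\]
and likewise $\Phi_{\mathcal{E}_{i}\boxtimes\mathcal{G}_{i}}^{\mathcal{X}\to\mathcal{X}}(\mathcal{F})\simeq R\Gamma(\mathcal{X},\mathcal{F}\otimes^{\mathbb{L}}\mathcal{E}_{i})\otimes_{k}\mathcal{G}_{i}$. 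As $\operatorname{Supp}\mathcal{F}$ is finite over $k$ and $\mathcal{X}$ is smooth, $V_{i}:=R\Gamma(\mathcal{X},\mathcal{F}\otimes^{\mathbb{L}}\mathcal{E}_{i})$ is a bounded complex of finite-dimensional $k$-vector spaces and hence splits as $\bigoplus_{j}H^{j}(V_{i})[-j]$; since $F$ is $k$-linear, additive and exact it follows that $V_{i}\otimes_{k}F(\mathcal{G}_{i})\simeq F(V_{i}\otimes_{k}\mathcal{G}_{i})$, i.e.
\[
\Phi_{\mathcal{E}_{i}\boxtimes F(\mathcal{G}_{i})}^{\mathcal{X}\to\mathcal{Y}}(\mathcal{F})\simeq F\big(\Phi_{\mathcal{E}_{i}\boxtimes\mathcal{G}_{i}}^{\mathcal{X}\to\mathcal{X}}(\mathcal{F})\big).
\]
Moreover, the differentials of the complex \eqref{eq4.3} are the images of those of \eqref{eq4.1} under the K\"unneth identification of $\operatorname{Hom}$-groups; writing each differential of \eqref{eq4.1} as a finite sum $\sum_{\alpha}\phi_{\alpha}\boxtimes\psi_{\alpha}$ of external products of sheaf maps (possible because both external factors are sheaves), one checks that the displayed isomorphism is compatible with these differentials. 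Hence the complex obtained by applying $\Phi_{-}^{\mathcal{X}\to\mathcal{Y}}(\mathcal{F})$ to \eqref{eq4.3} is isomorphic to $F$ applied objectwise to the complex obtained by applying $\Phi_{-}^{\mathcal{X}\to\mathcal{X}}(\mathcal{F})$ to \eqref{eq4.1}.

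\smallskip
\noindent\emph{Step 2 (transporting the Postnikov system).} For zero-dimensional $\mathcal{F}$ the functors $\Phi_{-}^{\mathcal{X}\to\mathcal{Y}}(\mathcal{F})$ and $\Phi_{-}^{\mathcal{X}\to\mathcal{X}}(\mathcal{F})$ are exact and preserve bounded coherence (the relevant supports become proper over the target), so applying $\Phi_{-}^{\mathcal{X}\to\mathcal{Y}}(\mathcal{F})$ to the Postnikov system \eqref{eq4.4} yields a Postnikov system in $D^{b}_{coh}(\mathcal{Y})$ whose $n$-th term is $\Phi_{K_{n}}^{\mathcal{X}\to\mathcal{Y}}(\mathcal{F})$. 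By Step 1 its underlying diagram agrees, through the isomorphism constructed there, with the diagram obtained by applying $F$ to the Postnikov system gotten from \eqref{eq4.2} by $\Phi_{-}^{\mathcal{X}\to\mathcal{X}}(\mathcal{F})$; since $M_{n}$ is the brutal truncation of the honest complex of sheaves \eqref{eq4.1}, the $n$-th term of the latter is precisely $F(\Phi_{M_{n}}^{\mathcal{X}\to\mathcal{X}}(\mathcal{F}))$. Using the uniqueness and functoriality of the Postnikov system furnished by \cite[Lemma\ 2.1]{kawamata}, whose $\operatorname{Ext}$-vanishing hypothesis for \eqref{eq4.3} is exactly the K\"unneth computation preceding this lemma --- equivalently, by an induction on $n$ completing the morphism of Postnikov triangles with the help of the compatibility of differentials from Step 1 --- we conclude
\[
\Phi_{K_{n}}^{\mathcal{X}\to\mathcal{Y}}(\mathcal{F})\simeq F\big(\Phi_{M_{n}}^{\mathcal{X}\to\mathcal{X}}(\mathcal{F})\big)
\]
for every $n$, functorially in $\mathcal{F}$.

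\smallskip
\noindent\emph{Step 3 (conclusion).} Fix $n>\dim(\mathcal{X}\times_{k}\mathcal{X})$. Then $M_{n}\simeq\mathcal{O}_{\Delta_{\mathcal{X}}}\oplus\mathcal{F}_{n}[n]$, and since $\Phi_{\mathcal{O}_{\Delta_{\mathcal{X}}}}^{\mathcal{X}\to\mathcal{X}}$ is the identity functor, $\Phi_{M_{n}}^{\mathcal{X}\to\mathcal{X}}(\mathcal{F})\simeq\mathcal{F}\oplus\Phi_{\mathcal{F}_{n}}^{\mathcal{X}\to\mathcal{X}}(\mathcal{F})[n]$. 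Combined with Step 2,
\[
\Phi_{K_{n}}^{\mathcal{X}\to\mathcal{Y}}(\mathcal{F})\simeq F(\mathcal{F})\oplus F\big(\Phi_{\mathcal{F}_{n}}^{\mathcal{X}\to\mathcal{X}}(\mathcal{F})\big)[n],
\]
so $\mathcal{K}_{n}:=\Phi_{\mathcal{F}_{n}}^{\mathcal{X}\to\mathcal{X}}(\mathcal{F})$ does the job --- a priori an object of $D^{b}_{coh}(\mathcal{X})$, possibly depending on $\mathcal{F}$. The main obstacle is Step 2: one must check not merely that the vertices of the two Postnikov systems correspond but that the connecting morphisms do, which comes down to the compatibility of the K\"unneth identification of $\operatorname{Hom}$-groups with the Fourier--Mukai functors $\Phi_{-}(\mathcal{F})$ and with $F$, together with a uniqueness statement for the convolutions in play; the ingredients of Step 1 (projection formula, base change, and the splitting of perfect complexes over a field) are routine.
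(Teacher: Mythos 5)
Your proposal is correct and follows essentially the same approach as the paper: the paper also applies the two functors $\Phi_{-}^{\mathcal{X}\to\mathcal{Y}}(\mathcal{F})$ and $F\circ\Phi_{-}^{\mathcal{X}\to\mathcal{X}}(\mathcal{F})$ to the Postnikov systems \eqref{eq4.4} and \eqref{eq4.2}, computes the termwise images to be $\Gamma(\mathcal{X},\mathcal{F}\otimes\mathcal{E}_{i})\otimes_{k}F(\mathcal{G}_{i})$, and invokes \cite[Lemma\ 2.1]{kawamata} to identify $\Phi_{K_n}^{\mathcal{X}\to\mathcal{Y}}(\mathcal{F})$ with $F(\Phi_{M_n}^{\mathcal{X}\to\mathcal{X}}(\mathcal{F}))$ before feeding in the splitting of $M_n$. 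You are slightly more careful than the paper about the compatibility of the differentials (the paper only verifies the objects of the two complexes agree), and your closing observation is also well-taken: the object produced is $\Phi_{\mathcal{F}_n}^{\mathcal{X}\to\mathcal{X}}(\mathcal{F})$, which depends on $\mathcal{F}$ and is a priori only a bounded complex rather than a single coherent sheaf, so the notation $\mathcal{K}_n$ in the lemma statement is a slight abuse --- and indeed this is exactly how the conclusion is used in Lemma~\ref{4.3}.
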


\begin{proof}
Let $\mathcal{F}$ be a coherent sheaf on $\mathcal{X}$ with zero-dimensional support. Consider the following functor
\begin{align*}
    \Psi\colon D^{b}_{coh}(\mathcal{X}\times_{k}\mathcal{Y})&\longrightarrow D^{b}_{coh}(\mathcal{Y})\\
    N&\longmapsto\Phi_{N}^{\mathcal{X\to Y}}(\mathcal{F}).
\end{align*}
Note that this functor is exact. Applying the functor $\Psi$ to \ref{eq4.4} gives us a Postnikov system for the complex
\begin{equation}\label{eq4.5}
    \begin{tikzcd}
    \cdots\arrow[r] & \Phi_{\mathcal{E}_{n}\boxtimes F(\mathcal{G}_{n})}^{\mathcal{X\to Y}}(\mathcal{F})\arrow[r] & \cdots\arrow[r] & \Phi_{\mathcal{E}_{0}\boxtimes F(\mathcal{G}_{0})}^{\mathcal{X\to Y}}(\mathcal{F}).
    \end{tikzcd}
\end{equation}
Evaluating $\Phi_{\mathcal{E}_{n}\boxtimes F(\mathcal{G}_{n})}(\mathcal{F})$ gives us
\begin{align*}
    \Phi_{\mathcal{E}_{n}\boxtimes F(\mathcal{G}_{n})}^{\mathcal{X\to Y}}(\mathcal{F})&=Rp_{*}(Lq^{*}(\mathcal{F})\otimes_{\mathcal{O}_{\mathcal{X}\times_{k}\mathcal{Y}}}^{\mathbb{L}}(\mathcal{E}_{n}\boxtimes F(\mathcal{G}_{n}))\\
    &\simeq Rp_{*}(Lq^{*}(\mathcal{F}\otimes\mathcal{E}_{n})\otimes_{\mathcal{O}_{\mathcal{X}\times_{k}\mathcal{Y}}}^{\mathbb{L}}Lp^{*}F(\mathcal{G}_{n}))\\
    &\simeq Rp_{*}(Lq^{*}(\mathcal{F}\otimes\mathcal{E}_{n})\otimes_{\mathcal{O}_{\mathcal{X}\times_{k}\mathcal{Y}}}^{\mathbb{L}}Lp^{*}F(\mathcal{G}_{n}))\\
    &\simeq Rp_{*}Lq^{*}(\mathcal{F}\otimes\mathcal{E}_{n})\otimes_{\mathcal{O}_{\mathcal{X}\times_{k}\mathcal{Y}}}^{\mathbb{L}}F(\mathcal{G}_{n}))\\
    &\simeq\Gamma(\mathcal{X}, \mathcal{F}\otimes\mathcal{E}_{n})\otimes_{k}F(\mathcal{G}_{n}).
\end{align*}
The second last quasi-isomorphism follows from the projection formula for concentrated morphism \cite[Corollary\ 4.12]{HR17}. The last quasi-isomorphism is due to the fact that $\mathcal{F}\otimes\mathcal{E}_{n}$ has vanishing higher cohomology sheaves since it has zero-dimensional support.

On the other hand, consider another functor
\begin{align*}
    \Psi^\prime\colon D^{b}_{coh}(\mathcal{X\times X})&\longrightarrow D^{b}_{coh}(\mathcal{Y})\\
    N&\longmapsto F(\Phi_{N}^{\mathcal{X}\to \mathcal{X}}(\mathcal{F})).
\end{align*}
Again, this functor is exact by assumption. Applying the functor $\Psi^\prime$ to \ref{eq4.2} gives us a Postnikov system for the complex
\begin{equation}\label{eq4.6}
    \begin{tikzcd}
    \cdots\arrow[r] & F(\Phi_{\mathcal{E}_{n}\boxtimes\mathcal{G}_{n}}^{\mathcal{X}\to \mathcal{X}}(\mathcal{F}))\arrow[r] & \cdots\arrow[r] & F(\Phi_{\mathcal{E}_{0}\boxtimes\mathcal{G}_{0}}^{\mathcal{X}\to \mathcal{X}}(\mathcal{F})).
    \end{tikzcd}
\end{equation}
Evaluating $F(\Phi_{\mathcal{E}_{n}\boxtimes\mathcal{G}_{n}}(\mathcal{F}))$ gives us
\begin{align*}
    F(\Phi_{\mathcal{E}_{n}\boxtimes\mathcal{G}_{n}}^{\mathcal{X}\to \mathcal{X}}(\mathcal{F}))&\simeq F(\Gamma(\mathcal{X},\mathcal{F}\otimes\mathcal{E}_{n})\otimes_{k}\mathcal{G}_{n})\\
    &\simeq\Gamma(\mathcal{X},\mathcal{F}\otimes\mathcal{E}_{n})\otimes_{k}F(\mathcal{G}_{n}).
\end{align*}
Thus we have obtained two Postnikov systems for the same complex in $D^{b}_{coh}(\mathcal{Y})$. Moreover, we have$$\operatorname{Ext}_{\mathcal{O_{Y}}}^{q}(F(\Gamma(\mathcal{X},\mathcal{F}\otimes\mathcal{E}_{i})\otimes_{k}\mathcal{G}_{i}),F(\Gamma(\mathcal{X},\mathcal{F}\otimes\mathcal{E}_{j})\otimes_{k}\mathcal{G}_{j}))=0,\ q<0.$$By \cite[Lemma\ 2.1]{kawamata}, we conclude that$$\Phi_{K_{n}}^{\mathcal{X\to Y}}(\mathcal{F})\simeq F(\Phi_{M_{n}}^{\mathcal{X}\to \mathcal{X}}(\mathcal{F}))$$for all $n>0$. If $n>\dim(\mathcal{X}\times_{k}\mathcal{X})$, we have $M_{n}\simeq\mathcal{O}_{\Delta_{\mathcal{X}}}\oplus\mathcal{F}_{n}[n]$. It follows that$$\Phi_{K_{n}}^{\mathcal{X\to Y}}(\mathcal{F})\simeq F(\mathcal{F})\oplus F(\Phi_{\mathcal{F}_{n}}^{\mathcal{X}\to \mathcal{X}}(\mathcal{F}))[n].$$This finishes the proof.
\end{proof}

Now we produce the premised Fourier--Mukai kernel. We first relate the cohomology of $\Phi_{K_{n}}^{\mathcal{X}\to\mathcal{Y}}$ to that of $K_{n}$.

\begin{lemma}\label{4.2}
Let $\mathcal{X},\mathcal{Y}$ be smooth tame algebraic stacks of finite type over $k$. Let $K$ be a bounded complex in $D_{coh}^{b}(\mathcal{X})$. Assume that $\mathcal{X}$ has finite diagonal. For a fixed integer $i$, if $H^{i}(\Phi_{K}^{\mathcal{X}\to\mathcal{Y}}(\mathcal{F}))=0$ for all coherent sheaves $\mathcal{F}$ on $\mathcal{X}$ with zero dimensional support, then $H^{i}(K)=0$.
\end{lemma}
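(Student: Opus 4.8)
The plan is to reduce the vanishing of $H^{i}(K)$ to a local statement on a smooth presentation of $\mathcal{X}\times_{k}\mathcal{Y}$ and then run a Koszul--Nakayama dévissage, in the spirit of Olander's argument. Since $\mathcal{H}^{i}(K)$ is coherent on the Jacobson stack $\mathcal{X}\times_{k}\mathcal{Y}$, it vanishes as soon as it vanishes at every closed point, and every closed point $z$ of $\mathcal{X}\times_{k}\mathcal{Y}$ maps to a closed point $x_{0}$ of $\mathcal{X}$ because the projection is of finite type (and the residue field of $z$ is finite over $k$). Arguing by contradiction, I would fix a closed point $z\in\operatorname{Supp}\mathcal{H}^{i}(K)$, let $x_{0}$ be its image in $\mathcal{X}$, and apply the hypothesis to a carefully chosen zero-dimensional sheaf supported at $x_{0}$.

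The sheaf to use is $\mathcal{O}_{x_{0}}=\bar{x}_{*}\mathcal{O}_{\operatorname{Spec}l}$, where $\bar{x}\colon\operatorname{Spec}l\to\mathcal{X}$ is the canonical representative of $x_{0}$ through its residual gerbe; it is coherent with zero-dimensional support, $\bar{x}$ is affine, and since $\mathcal{X}$ is smooth the immersion $\mathcal{G}_{x_{0}}\hookrightarrow\mathcal{X}$ is regular of codimension $d=\dim_{x_{0}}\mathcal{X}$, so $\bar{x}$ has finite Tor-amplitude. Setting $\bar{x}'=\bar{x}\times_{k}\operatorname{id}_{\mathcal{Y}}$, flat base change along $p_{1}$ identifies $Lp_{1}^{*}\mathcal{O}_{x_{0}}$ with $\bar{x}'_{*}\mathcal{O}$, and the projection formula for the concentrated morphism $\bar{x}'$ (\cite[Corollary\ 4.12]{HR17}) gives $Lp_{1}^{*}\mathcal{O}_{x_{0}}\otimes^{\mathbb{L}}K\simeq\bar{x}'_{*}(L\bar{x}'^{*}K)$. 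As $p_{2}\circ\bar{x}'\colon\operatorname{Spec}l\times_{k}\mathcal{Y}\to\mathcal{Y}$ is finite, $R(p_{2}\bar{x}')_{*}$ is exact and faithful, so $\Phi_{K}^{\mathcal{X}\to\mathcal{Y}}(\mathcal{O}_{x_{0}})\simeq(p_{2}\bar{x}')_{*}(L\bar{x}'^{*}K)$ and the hypothesis forces $\mathcal{H}^{i}(L\bar{x}'^{*}K)=0$. Factoring $\bar{x}'$ through the faithfully flat atlas $\operatorname{Spec}l\times_{k}\mathcal{Y}\to\mathcal{G}_{x_{0}}\times_{k}\mathcal{Y}$ and descending, this gives $\mathcal{H}^{i}(Lj^{*}K)=0$, where $j\colon\mathcal{G}_{x_{0}}\times_{k}\mathcal{Y}\hookrightarrow\mathcal{X}\times_{k}\mathcal{Y}$ is the regular closed immersion of codimension $d$ cut out by the preimage of $\mathfrak{m}_{x_{0}}$. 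It is essential here to push forward from the \emph{atlas} $\operatorname{Spec}l$ and not from the residual gerbe: the direct image $\operatorname{QCoh}(\mathcal{G}_{x_{0}}\times_{k}\mathcal{Y})\to\operatorname{QCoh}(\mathcal{Y})$ need not be faithful, since a nontrivial representation of a finite linearly reductive group has no invariants.

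For the local step, choose a smooth presentation $\operatorname{Spec}R\to\mathcal{X}\times_{k}\mathcal{Y}$ with a point $\tilde{z}$ over $z$; as $\mathcal{X}\times_{k}\mathcal{Y}$ is smooth over $k$, the local ring $R'=R_{\tilde{z}}$ is regular local. Flatness of $\operatorname{Spec}R\to\mathcal{X}\times_{k}\mathcal{Y}\to\mathcal{X}$ shows that $j$ pulls back to the subscheme of $\operatorname{Spec}R'$ cut out by a regular sequence $t_{1},\dots,t_{d}\in\mathfrak{m}_{R'}$, so base-changing $Lj^{*}K$ along the presentation gives $H^{i}\!\big(\operatorname{Kos}_{R'}(t_{1},\dots,t_{d})\otimes_{R'}K_{\bullet}\big)=0$, where $K_{\bullet}$ is the bounded complex with coherent cohomology obtained by localizing the pullback of $K$ at $\tilde{z}$. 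Now induct on $d$: writing $\operatorname{Kos}_{R'}(t_{1},\dots,t_{d})\otimes K_{\bullet}\simeq\operatorname{Cone}\!\big(L_{\bullet}\xrightarrow{\,t_{d}\,}L_{\bullet}\big)$ with $L_{\bullet}=\operatorname{Kos}_{R'}(t_{1},\dots,t_{d-1})\otimes K_{\bullet}$, the long exact cohomology sequence yields $t_{d}\cdot H^{i}(L_{\bullet})=H^{i}(L_{\bullet})$, hence $H^{i}(L_{\bullet})=0$ by Nakayama (the module is finitely generated and $t_{d}$ lies in the Jacobson radical), and iterating down to $d=0$ gives $H^{i}(K_{\bullet})=0$. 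This contradicts $z\in\operatorname{Supp}\mathcal{H}^{i}(K)$, so $H^{i}(K)=0$. I expect the main obstacle to be the bookkeeping in the second paragraph — making the base-change and projection-formula identifications precise for algebraic stacks, and picking the zero-dimensional sheaf whose Fourier--Mukai image faithfully detects $\mathcal{H}^{i}(Lj^{*}K)$; the concluding Koszul--Nakayama dévissage is elementary once smoothness of $\mathcal{X}$ secures the regularity of $j$.
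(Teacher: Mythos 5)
Your argument is correct, and it is a genuinely different route from the paper's. The paper reduces to the scheme case: it localizes on $\mathcal{Y}$ (assuming $\mathcal{Y}$ affine), picks a quasi-finite separated flat presentation $\pi_{1}\colon U\to\mathcal{X}$ with $U$ affine, observes $\pi_{1}$ (hence $\pi=\pi_{1}\times\operatorname{id}_{\mathcal{Y}}$) is affine because $\mathcal{X}$ has finite diagonal, uses surjectivity of the counit $\pi^{*}\pi_{*}\to\operatorname{id}$ to transfer the vanishing of cohomology from $\Phi_{K}^{\mathcal{X}\to\mathcal{Y}}(\pi_{1,*}\mathcal{G})$ to $\Phi_{L\pi^{*}K}^{U\to\mathcal{Y}}(\mathcal{G})$, and then cites the scheme-theoretic result \cite[\href{https://stacks.math.columbia.edu/tag/0FZ9}{Tag 0FZ9}]{stacks-project} followed by faithfully flat descent along $\pi$. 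You instead give a self-contained pointwise argument: test the hypothesis only against the single skyscraper $\mathcal{O}_{x_{0}}=\bar{x}_{*}\mathcal{O}_{\operatorname{Spec}l}$ at each closed point $x_{0}$, use flat base change and the projection formula to identify $\Phi_{K}^{\mathcal{X}\to\mathcal{Y}}(\mathcal{O}_{x_{0}})\simeq(p_{2}\bar{x}')_{*}L\bar{x}'^{*}K$, exploit exactness and faithfulness of the finite pushforward $(p_{2}\bar{x}')_{*}$ to obtain $\mathcal{H}^{i}(L\bar{x}'^{*}K)=0$, descend to $\mathcal{H}^{i}(Lj^{*}K)=0$, and finish with a Koszul--Nakayama dévissage in a regular local ring. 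The paper's route is shorter because it leans on the known scheme case; your route is longer but self-contained, it makes the smoothness of $\mathcal{X}$ play an explicit geometric role (regularity of the immersion of the residual gerbe), and it incidentally proves a slightly stronger statement, namely that one only needs the hypothesis for the specific sheaves $\mathcal{O}_{x_{0}}$ rather than for all coherent sheaves with zero-dimensional support. Your parenthetical warning that one must push forward from the atlas $\operatorname{Spec}l$ rather than from $\mathcal{G}_{x_{0}}$ (since the latter pushforward is not faithful in the stacky setting) is exactly the subtlety that distinguishes the stack case from the algebraic-space case, and it is handled correctly.
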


\begin{proof}
The problem is local on $\mathcal{Y}$, so we may assume $\mathcal{Y}$ is an affine scheme. By \cite[Theorem\ 7.2]{quasifinite}, we may choose a quasi-finite separated flat presentation $\pi_{1}\colon U\to\mathcal{X}$ with $U$ an affine scheme. Consider the following Cartesian diagram
\begin{equation*}
    \begin{tikzcd}
    U\arrow[d,"\pi_{1}",swap] & U\times\mathcal{Y}\arrow[l,"\tilde{p}_{1}",swap]\arrow[d,"\pi",swap]\arrow[rd,"\tilde{p}_{2}"] & \\
    \mathcal{X} & \mathcal{X}\times\mathcal{Y}\arrow[l,"p_{1}"]\arrow[r,"p_{2}",swap] & \mathcal{Y}.
    \end{tikzcd}
\end{equation*}
Let $\mathcal{G}$ be a coherent sheaf on $U$ with zero dimensional support. Since $\pi_{1}$ is quasi-finite, the pushforward sheaf $\pi_{1,*}\mathcal{G}$ is a coherent sheaf on $\mathcal{X}$ with zero-dimensional support. By assumption, we have $H^{i}(\Phi_{K}^{\mathcal{X}\to\mathcal{Y}}(\pi_{1,*}\mathcal{G}))=0$. Flat base change yields$$\Phi_{K}^{\mathcal{X}\to\mathcal{Y}}(\pi_{1,*}\mathcal{G})\simeq R\tilde{p}_{2,*}(\pi^{*}\pi_{*}\tilde{p}_{1}^{*}\mathcal{G}\otimes^{\mathbb{L}}L\pi^{*}K).$$Since $\mathcal{X}$ has finite diagonal, $\pi$ is also affine. So the counit of the adjunction$$\pi^{*}\pi_{*}\tilde{p}_{1}^{*}\mathcal{G}\to\tilde{p}_{1}^{*}\mathcal{G}$$is surjective. Then we have a morphism of complexes$$R\tilde{p}_{2,*}(\pi^{*}\pi_{*}\tilde{p}_{1}^{*}\mathcal{G}\otimes^{\mathbb{L}}L\pi^{*}K)\to\Phi_{L\pi^{*}K}^{U\to\mathcal{Y}}(\pi_{1,*}\mathcal{G})$$whose induces map on the cohomology sheaves$$H^{i}(\Phi_{K}^{\mathcal{X}\to\mathcal{Y}}(\pi_{1,*}\mathcal{G}))\to H^{i}(\Phi_{L\pi^{*}K}^{U\to\mathcal{Y}}(\pi_{1,*}\mathcal{G}))$$is surjective for all $i>0$. It follows that $H^{i}(\Phi_{\pi^{*}K}^{U\to\mathcal{Y}}(\pi_{1,*}\mathcal{G}))=0$. This is true for all $\mathcal{G}$ on $U$ with zero dimensional support. It follows from \cite[\href{https://stacks.math.columbia.edu/tag/0FZ9}{Tag 0FZ9}]{stacks-project} that $H^{i}(L\pi^{*}K)=0$. Since $\pi$ is faithfully flat, we conclude that $H^{i}(K)=0$ as desired.
\end{proof}

Recall that a $k$-linear exact functor $F\colon D^{b}_{coh}(\mathcal{X})\to D^{b}_{coh}(\mathcal{Y})$ is {\it bounded} if there exists a positive integer $m$ such that for all coherent sheaves $\mathcal{F}$ on $\mathcal{X}$, the coherent sheaf $F(\mathcal{F})$ has nonzero cohomology only in $[-m,m]$. If $D_{coh}^{b}(\mathcal{X})$ is $\operatorname{Ext}$-finite and has a strong generator, then any exact functor from $D_{coh}^{b}(\mathcal{X})$ is bounded by \cite[Theorem\ 1.3]{BvdB}. These conditions are satisfied when $\mathcal{X}$ is proper and tame over $k$. Indeed, properness ensures that $D_{coh}^{b}(\mathcal{X})$ is Ext-finite and strong generation follows from \cite[Theorem\ B.1]{HP24}.

\begin{lemma}\label{4.3}
Let $\mathcal{X},\mathcal{Y}$ be smooth tame algebraic stacks of finite type over $k$. Let $F\colon D^{b}_{coh}(\mathcal{X})\to D^{b}_{coh}(\mathcal{Y})$ be a $k$-linear exact fully faithful functor. If $\mathcal{X}$ is proper, then there exists a complex $K$ such that$$\Phi_{K}^{\mathcal{X\to Y}}(\mathcal{F})\simeq F(\mathcal{F})$$for any coherent sheaf $\mathcal{F}$ on $\mathcal{X}$ with zero dimensional support.
\end{lemma}

\begin{proof}
By assumption the diagonal of $\mathcal{X}$ is finite. Since $F$ is bounded, the complex $\Phi_{K_{n}}^{\mathcal{X\to Y}}$ has vanishing cohomology sheaves outside $[-m,m]\cup[-m-n,m-n]$. By Lemma \ref{4.2}, we know that $K_{n}$ has vanishing cohomology outside $[-m,m]\cup[-m-n,m-n]$. Choose $n>\dim(\mathcal{X}\times\mathcal{Y})+2m$. We have a decomposition$$K_{n}\simeq K\oplus C_{n}$$where $K$ is a bounded complex in $D_{coh}^{b}(\mathcal{X}\times\mathcal{Y})$ concentrated in $[-m,m]$ and $C_{n}$ is a bounded complex in $D_{coh}^{b}(\mathcal{X}\times\mathcal{Y})$ concentrated in $[-m-n,m-n]$. Therefore we have$$\Phi_{K_{n}}^{\mathcal{X\to Y}}(\mathcal{F})\simeq\Phi_{K}^{\mathcal{X\to Y}}(\mathcal{F})\oplus\Phi_{C_{n}}^{\mathcal{X\to Y}}(\mathcal{F}),$$where $\Phi_{K}^{\mathcal{X\to Y}}(\mathcal{F})$ is a bounded complex in $D_{coh}^{b}(\mathcal{Y})$ concentrated in $[-m,m]$ and $\Phi_{C_{n}}^{\mathcal{X\to Y}}(\mathcal{F})$ is a bounded complex in $D_{coh}^{b}(\mathcal{Y})$ concentrated in $[-m-n,m-n]$. By Lemma \ref{4.1}, we have$$\Phi_{K_{n}}^{\mathcal{X\to Y}}(\mathcal{F})\simeq F(\mathcal{F})\oplus F(\mathcal{K}_{n})[n].$$Comparing the cohomology groups gives us $\Phi_{K}^{\mathcal{X\to Y}}(\mathcal{F})\simeq F(\mathcal{F})$ and $\Phi_{C_{n}}^{\mathcal{X\to Y}}(\mathcal{F})\simeq F(\Phi_{\mathcal{F}_{n}}^{\mathcal{X}\to \mathcal{X}}(\mathcal{F}))[n]$ as desired.
\end{proof}

\section{Generalized points}\label{5}

In this section, we establish some auxiliary results about exact functors that preserve coherent sheaves with zero-dimensional support. We first recall the notion of generalized closed points, which is first introduced Lim and Polishchuk in \cite{LP21} over an algebraically closed field and then generalized Hall and Priver in \cite{HP24}.

\begin{definition}\label{5.a}
    Let $\mathcal{X}$ be a quasi-separated algebraic stack. A \textit{generalized point} of $\mathcal{X}$ consists of a pair $(x,\xi)$ where $x\colon\operatorname{Spec}l\to\mathcal{X}$ is a point of $\mathcal{X}$ and $\xi$ is a simple object in $\operatorname{QCoh}(\mathcal{G}_{x})$. We say a generalized point $(x,\xi)$ of $\mathcal{X}$ is close if $x$ is a closed point of $\mathcal{X}$. Let $\mathcal{O}_{x,\xi}=i_{x,*}\xi$. We call $\mathcal{O}_{x,\xi}$ the \textit{generalized skyscraper sheaf} supported at $(x,\xi)$.
\end{definition}

See \cite[Section\ 7]{HP24} for the details on the definition of generalized points. All the generalized points considered in this article are closed unless otherwise stated. It is well-known that the collections of all the skyscraper sheaves on a smooth algebraic variety $X$ over a field form a spanning class for $D_{coh}^{b}(X)$. The following result provides us with an analogous statement for generalized skyscraper sheaves on certain algebraic stacks. We say an algebraic stack $\mathcal{X}$ satisfies the Thomason condition if $D_{qc}(\mathcal{X})$ is compactly generated and for any open subset $U\subseteq\mathcal{X}$, there is a compact perfect complex supported on $\mathcal{X}\setminus U$. 

\begin{proposition}[{\cite[Corollary\ 7.17]{HP24}}]\label{5.1}
Let $\mathcal{X}$ be a Noetherian algebraic stack. If $\mathcal{X}$ satisfies the Thomason condition, then the collection of all the generalized skyscraper sheaves on $\mathcal{X}$ forms a spanning class for $D_{coh}^{b}(\mathcal{X})$.
\end{proposition}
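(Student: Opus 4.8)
The statement is cited from \cite[Corollary 7.17]{HP24}, so the plan is to explain how one deduces it from the Thomason condition together with standard facts about residual gerbes and spanning classes. Recall that a set $S$ of objects in a triangulated category $\mathcal{T}$ is a \emph{spanning class} if $\operatorname{Hom}(S, T[i]) = 0$ for all $i$ forces $T \cong 0$, and dually. Here we take $\mathcal{T} = D^{b}_{coh}(\mathcal{X})$ and $S$ the collection of generalized skyscraper sheaves $\mathcal{O}_{x,\xi} = i_{x,*}\xi$ as $(x,\xi)$ ranges over closed generalized points. The essential input is that the Thomason condition provides, for each nonempty closed subset $Z \subseteq \mathcal{X}$, a compact (hence perfect, since $\mathcal{X}$ is Noetherian) complex scheme-theoretically supported on $Z$; applying this to the closures of points lets us localize the vanishing to the residual gerbe at a single closed point.

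First I would reduce to a local statement. Suppose $K \in D^{b}_{coh}(\mathcal{X})$ is nonzero; let $Z = \operatorname{Supp}(K)$ and pick a closed point $x \in Z$ that is minimal, so that after passing to a suitable open neighborhood (using the Thomason complex supported on $Z$ to detect where $K$ lives) we may assume $x$ is closed and $i_x^{*}K$ — the derived pullback to the residual gerbe $\mathcal{G}_x$ via the quasi-affine monomorphism $i_x\colon \mathcal{G}_x \hookrightarrow \mathcal{X}$ — is nonzero. Here I would use that $i_x$ is a monomorphism, so $i_x^{*}$ on the residual gerbe behaves like restriction to a point in the scheme case, and that $\mathcal{X}$ Noetherian with affine stabilizers makes $\mathcal{G}_x$ itself a nice gerbe over a field, namely $\mathcal{G}_x = BG_x$-twisted form. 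Then the problem becomes: the simple objects $\xi \in \operatorname{QCoh}(\mathcal{G}_x)$ span $D^{b}_{coh}(\mathcal{G}_x)$, which holds because $\mathcal{G}_x$ has a nonzero object in its heart with a composition series, and morphisms out of the top cohomology of $i_x^{*}K$ to shifts of the simple constituents cannot all vanish.

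The technical heart is the passage between $\operatorname{Hom}_{\mathcal{X}}(\mathcal{O}_{x,\xi}, K[j])$ and $\operatorname{Hom}_{\mathcal{G}_x}(\xi, i_x^{!}K[j])$ or $\operatorname{Hom}_{\mathcal{G}_x}(i_x^{*}K, \xi[j'])$ via adjunction, and controlling the failure of $i_x$ to be a closed immersion globally — it is only a closed immersion on the reduced locus, being a quasi-affine monomorphism. The cleanest route is: choose a closed point $x$ in the support minimal for specialization, so that $i_x$ becomes a closed immersion after restricting to an open $U$ on which $\overline{\{x\}} \cap U = \{x\}$ (scheme-theoretically up to nilpotents); use the Thomason condition to guarantee $K|_U \ne 0$; then the finite-length structure of the restriction to the residual gerbe, together with the fact that $i_{x,*}$ is exact and fully faithful on the abelian level, produces a nonzero $\operatorname{Hom}(\mathcal{O}_{x,\xi}, K[j])$ for suitable $\xi$ and $j$. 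For the dual (co-spanning) condition one argues symmetrically using $i_x^{!}$ and that $\mathcal{X}$ being suitably Gorenstein/finite-type makes $i_x^{!}$ differ from $i_x^{*}$ only by a shift and a line-bundle twist on $\mathcal{G}_x$, which does not affect simplicity.

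I expect the main obstacle to be exactly the control of the non-reducedness: the naive argument "restrict to the skyscraper and read off cohomology" works verbatim for schemes because residue fields are fields, but here $\operatorname{QCoh}(\mathcal{G}_x)$ is only semisimple when $G_x$ is linearly reductive, and in general one must instead use that $i_x^{*}K$ has finite length in a suitable $t$-structure and extract a simple subquotient. Making this precise — in particular ensuring that a simple object of $\operatorname{QCoh}(\mathcal{G}_x)$ is again simple after the line-bundle twist coming from $i_x^{!}$, and that the Thomason condition is genuinely used to rule out $K$ being "invisible" on every quasi-compact open — is where the proof of \cite[Corollary 7.17]{HP24} does its real work, and I would follow their Section 7 closely rather than attempt a shortcut.
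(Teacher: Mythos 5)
The paper gives no proof of Proposition~\ref{5.1}; it records the statement as a direct citation of \cite[Corollary~7.17]{HP24} with no further argument, so strategically you and the paper agree: both defer entirely to that reference. A few spots in your sketch nonetheless deserve comment. You devote considerable effort to worrying that $i_x$ is ``only a closed immersion on the reduced locus'' and propose localizing to an open $U$ to remedy this, but the maneuver is unnecessary: for a \emph{closed} point $x$ of a Noetherian quasi-separated algebraic stack, $i_x\colon\mathcal{G}_x\to\mathcal{X}$ is already a closed immersion, as the paper itself records in the paragraph introducing residual gerbes in Section~\ref{3}, and since $Z=\operatorname{Supp}(K)$ is closed in $\mathcal{X}$, any point closed in $Z$ is closed in $\mathcal{X}$. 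Your treatment of the co-spanning direction invokes Gorenstein-type hypotheses so that $i_x^{!}$ differs from $i_x^{*}$ only by a shift and a line-bundle twist; those hypotheses do not appear in Proposition~\ref{5.1}, which asks only that $\mathcal{X}$ be Noetherian and satisfy the Thomason condition, so an argument built on them would establish a strictly weaker statement than what is cited --- the right move is to work directly with the adjunction $i_{x,*}\dashv i_x^{!}$ for the closed immersion $i_x$, which needs no duality input. Finally, ``pick a closed point $x\in Z$ that is minimal'' is a slip (closed points are maximal for specialization); what one wants is a closed point in the support of the top nonvanishing cohomology sheaf of $K$, so that the right-exactness of $Li_x^{*}$ together with Nakayama's lemma at $x$ gives $Li_x^{*}K\neq 0$ directly, without needing a Thomason complex to ``detect where $K$ lives.'' The Thomason condition is more plausibly used in \cite{HP24} to ensure compact generation and the availability of perfect complexes supported on residual gerbes, which is what makes the detection lemma go through for stacks that need not have the resolution property.
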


See \cite[Example\ 7.16]{HP24} for examples of algebraic stacks satisfying the Thomason condition. In particular, any regular tame algebraic stack over a field $k$ with quasi-affine diagonal and finite Krull dimension satisfies the Thomason condition \cite[Theorem\ 2.1]{Hal22}.

In \cite[Section\ 3.2]{olander_2022}, Olander established some useful lemmas on fully faithful functors agreeing on points of algebraic spaces. For the rest of this section, we establish these lemmas for faithful functors agreeing on generalized points of tame algebraic stacks. The arguments for these results are completely the same. We decided to include them for completeness. We first produce both a left and a right adjoint for a fully faithful functor $F\colon D_{coh}^{b}(\mathcal{X})\to D_{coh}^{b}(\mathcal{Y})$.

\begin{lemma}\label{5.2}
Let $\mathcal{X},\mathcal{Y}$ be smooth proper, and tame algebraic stacks over $k$. Let $F\colon D_{coh}^{b}(\mathcal{X})\to D_{coh}^{b}(\mathcal{Y})$ be a $k$-linear exact functor. Then $F$ has both a left and right adjoint.
\end{lemma}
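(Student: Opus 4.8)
The plan is to produce the adjoints by appealing to Brown representability, which applies because the categories in question are well-behaved triangulated categories with enough structure. The key observation is that although $F$ is defined on bounded derived categories of coherent sheaves, we may pass to the unbounded derived categories $D_{qc}(\mathcal{X})$ and $D_{qc}(\mathcal{Y})$, which are compactly generated by perfect complexes since $\mathcal{X}$ and $\mathcal{Y}$ are Noetherian tame algebraic stacks with quasi-affine diagonal and finite Krull dimension (so they satisfy the Thomason condition, as recorded after Proposition \ref{5.1}, via \cite[Theorem\ 2.1]{Hal22}). Because $\mathcal{X}$ is regular and proper, $D^{b}_{coh}(\mathcal{X})$ coincides with the category of perfect complexes, hence with the compact objects of $D_{qc}(\mathcal{X})$; the same holds for $\mathcal{Y}$.

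First I would invoke the general machinery: a $k$-linear exact functor between the subcategories of compact objects of two compactly generated triangulated categories extends (after possibly enlarging to an exact functor on the big categories, or equivalently by \cite[Theorem\ 1.3]{BvdB}) in such a way that the existence of adjoints is governed by representability. Concretely, since $\mathcal{X}$ is smooth and proper, $D^{b}_{coh}(\mathcal{X})$ is $\operatorname{Ext}$-finite and admits a strong generator (properness gives $\operatorname{Ext}$-finiteness, and strong generation follows from \cite[Theorem\ B.1]{HP24}, exactly as noted before Lemma \ref{4.3}). By \cite[Theorem\ 1.3]{BvdB}, any exact functor out of such a category is bounded and, more importantly, the relevant representability results of Bondal--Van den Bergh apply: for each object $G$ of $D^{b}_{coh}(\mathcal{Y})$, the contravariant functor $\mathcal{F}\mapsto\operatorname{Hom}_{D^{b}_{coh}(\mathcal{Y})}(F(\mathcal{F}),G)$ is a cohomological functor of finite type on $D^{b}_{coh}(\mathcal{X})$, and hence representable, yielding the right adjoint $F^{!}$. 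Dually, $\mathcal{F}\mapsto\operatorname{Hom}_{D^{b}_{coh}(\mathcal{Y})}(G,F(\mathcal{F}))$ is covariant cohomological of finite type on $D^{b}_{coh}(\mathcal{X})$; applying the same representability statement to the opposite categories — which are again $\operatorname{Ext}$-finite with a strong generator, using that $\mathcal{X}$ and $\mathcal{Y}$ are smooth and proper so that Serre duality identifies $D^{b}_{coh}(\mathcal{X})^{\mathrm{op}}\simeq D^{b}_{coh}(\mathcal{X})$ up to twist and shift — produces the left adjoint $F^{*}$.

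The step I expect to require the most care is verifying that the hypotheses of \cite[Theorem\ 1.3]{BvdB} genuinely hold in the stacky setting: namely that $D^{b}_{coh}(\mathcal{X})$ is a saturated, or at least $\operatorname{Ext}$-finite with strong generator, triangulated category, and that the Serre functor exists. Here smoothness of $\mathcal{X}$ gives that $D^{b}_{coh}(\mathcal{X})=\operatorname{Perf}(\mathcal{X})$, properness gives $\operatorname{Ext}$-finiteness, tameness together with \cite[Theorem\ B\ and\ C]{HR15} gives finite cohomological dimension (already invoked in Section \ref{4}), and the existence of a Serre functor on $D^{b}_{coh}$ of a smooth proper tame stack is what upgrades the two cohomological-functor-of-finite-type conditions above into each other; alternatively one can cite \cite[Theorem\ B.1]{HP24} for strong generation directly. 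Once all this is in place, the two applications of representability are formal. I would close by remarking, as in the scheme case, that the left and right adjoints are in fact interchanged by the Serre functors of $\mathcal{X}$ and $\mathcal{Y}$, which will be used in the sequel.
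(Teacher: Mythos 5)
Your core argument is sound and, in effect, reconstructs the Bondal--Van den Bergh representability machinery that the paper wraps into a single citation: the paper's proof is a one-liner, noting that properness of $\mathcal{Y}$ makes $D^{b}_{coh}(\mathcal{Y})$ $\operatorname{Ext}$-finite and then invoking \cite[Corollary~B.3]{HP24} as a black box. What you write out --- that $D^{b}_{coh}(\mathcal{X})$ is $\operatorname{Ext}$-finite with a strong generator (properness plus \cite[Theorem~B.1]{HP24}), hence saturated in the sense of \cite[Theorem~1.3]{BvdB}, so that for each $G$ the contravariant functor $\mathcal{F}\mapsto\operatorname{Hom}(F(\mathcal{F}),G)$ is cohomological of finite type and representable --- is almost certainly the content of that corollary, so your approach is a legitimate self-contained alternative to quoting it.

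Two parts of your write-up are detours, though, and would be worth trimming. First, the opening paragraph about passing to the unbounded categories $D_{qc}$, the Thomason condition, and Brown representability never actually connects to the rest of the argument: the adjoints you construct live on the small categories $D^{b}_{coh}$, and the tool you use is saturatedness of an $\operatorname{Ext}$-finite strongly generated category, not Brown representability on a compactly generated big category. Second, you invoke Serre duality to argue that $D^{b}_{coh}(\mathcal{X})^{\mathrm{op}}$ is again $\operatorname{Ext}$-finite with a strong generator; this is true but unnecessary, since both properties are manifestly self-dual ($\operatorname{Hom}_{\mathcal{T}^{\mathrm{op}}}(A,B)=\operatorname{Hom}_{\mathcal{T}}(B,A)$ and a strong generator of $\mathcal{T}$ strongly generates $\mathcal{T}^{\mathrm{op}}$ with the same generation time), so you can corepresent the covariant functor $\mathcal{F}\mapsto\operatorname{Hom}(G,F(\mathcal{F}))$ and obtain the left adjoint without appealing to a Serre functor at all. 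Serre duality does give the cleaner closed formula $F^{*}\cong S_{\mathcal{X}}^{-1}\circ F^{!}\circ S_{\mathcal{Y}}$, and your closing remark about the adjoints being swapped by the Serre functors is correct and useful, but it should be presented as a consequence rather than as an input to the existence proof.
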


\begin{proof}
Since $\mathcal{Y}$ is proper, $D_{coh}^{b}(\mathcal{Y})$ is $\operatorname{Ext}$-finite. The statement follows from \cite[Corollary\ B.3]{HP24}.
\end{proof}

\begin{lemma}\label{5.3}
Let $\mathcal{X},\mathcal{Y}$ be smooth proper, and tame algebraic stacks over $k$. Let $F\colon D_{coh}^{b}(\mathcal{X})\to D_{coh}^{b}(\mathcal{Y})$ be a $k$-linear exact functor. If the natural map$$F^{*}\colon\operatorname{Ext}_{\mathcal{O}_{\mathcal{X}}}^{*}(\mathcal{O}_{x,\xi},\mathcal{O}_{y,\nu})\to\operatorname{Ext}_{\mathcal{O}_{\mathcal{Y}}}^{*}(F(\mathcal{O}_{x,\xi}),F(\mathcal{O}_{y,\nu}))$$is an isomorphism for all generalized points $(x,\xi)$ and $(y,\nu)$ of $\mathcal{X}$, then $F$ is fully faithful.
\end{lemma}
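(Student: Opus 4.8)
The plan is to run the classical spanning-class argument (following Olander's treatment of the case of algebraic spaces \cite{olander_2022}, and ultimately Bondal--Orlov). Write $\Omega=\{\mathcal{O}_{x,\xi}\}$ for the set of generalized skyscraper sheaves on $\mathcal{X}$. Since $\mathcal{X}$ is smooth, proper and tame over $k$, it is regular, of finite type over $k$ (hence Noetherian and of finite Krull dimension) and has finite---in particular quasi-affine---diagonal, so it satisfies the Thomason condition by \cite[Theorem~2.1]{Hal22}; by Proposition~\ref{5.1}, $\Omega$ is a spanning class for $D^{b}_{coh}(\mathcal{X})$. By Lemma~\ref{5.2}, $F$ has a left adjoint $H$ and a right adjoint $G$, both $k$-linear exact functors $D^{b}_{coh}(\mathcal{Y})\to D^{b}_{coh}(\mathcal{X})$; in particular every cone formed below lies in $D^{b}_{coh}(\mathcal{X})$. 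The hypothesis says exactly that $F^{*}\colon\operatorname{Hom}(\omega,\omega'[i])\to\operatorname{Hom}(F\omega,F\omega'[i])$ is bijective for all $\omega,\omega'\in\Omega$ and all $i$.

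The first step is to show the counit $c_{\omega}\colon HF\omega\to\omega$ is an isomorphism for each $\omega\in\Omega$. Complete it to a triangle $HF\omega\to\omega\to C_{\omega}\to HF\omega[1]$ and apply $\operatorname{Hom}(-,\omega'[i])$ with $\omega'\in\Omega$; the long exact sequence shows $\operatorname{Hom}(C_{\omega},\omega'[i])=0$ for all $i$ provided $(c_{\omega})^{*}\colon\operatorname{Hom}(\omega,\omega'[i])\to\operatorname{Hom}(HF\omega,\omega'[i])$ is bijective for all $i$. Transporting along the adjunction isomorphism $\operatorname{Hom}(HF\omega,\omega'[i])\cong\operatorname{Hom}(F\omega,F\omega'[i])$, a triangle-identity computation identifies $(c_{\omega})^{*}$ with the natural map $F^{*}$, which is bijective by hypothesis. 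Hence $\operatorname{Hom}(C_{\omega},\omega'[i])=0$ for all $\omega'\in\Omega$ and all $i$, so $C_{\omega}=0$ by the spanning property. Because $c_{\omega}$ is now invertible, for \emph{every} $B\in D^{b}_{coh}(\mathcal{X})$ and every $\omega\in\Omega$ the natural map $F^{*}\colon\operatorname{Hom}(\omega,B[i])\to\operatorname{Hom}(F\omega,FB[i])$ is bijective, since it factors as $\operatorname{Hom}(\omega,B[i])\xrightarrow{(c_{\omega})^{*}}\operatorname{Hom}(HF\omega,B[i])\cong\operatorname{Hom}(F\omega,FB[i])$ with the first arrow an isomorphism.

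Next I would show the unit $u_{A}\colon A\to GFA$ is an isomorphism for every $A\in D^{b}_{coh}(\mathcal{X})$. Form the triangle $A\to GFA\to C'_{A}\to A[1]$ and apply $\operatorname{Hom}(\omega,-)$ with $\omega\in\Omega$; using $\operatorname{Hom}(\omega,GFA[i])\cong\operatorname{Hom}(F\omega,FA[i])$ and another triangle identity, the connecting map $(u_{A})_{*}$ is identified with $F^{*}\colon\operatorname{Hom}(\omega,A[i])\to\operatorname{Hom}(F\omega,FA[i])$, which is bijective by the previous paragraph. Hence $\operatorname{Hom}(\omega,C'_{A}[i])=0$ for all $\omega\in\Omega$ and all $i$, so $C'_{A}=0$. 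Thus $u\colon\operatorname{id}\to GF$ is a natural isomorphism, and since $G$ is a right adjoint of $F$ this forces $F$ to be fully faithful: the composite $\operatorname{Hom}(A,B)\xrightarrow{F}\operatorname{Hom}(FA,FB)\cong\operatorname{Hom}(A,GFB)$ is post-composition with the isomorphism $u_{B}$.

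The only non-formal ingredients are the existence of the spanning class of generalized skyscraper sheaves (Proposition~\ref{5.1}) and of both adjoints (Lemma~\ref{5.2}); once these are in hand the proof is a diagram chase with adjunction units, counits and triangle identities, verbatim the same as in the scheme and algebraic-space cases, and this is where I expect essentially no difficulty. The one point worth double-checking carefully is that a smooth proper tame stack genuinely meets the hypotheses of Proposition~\ref{5.1}, which is why I would record regularity, finite diagonal and finite Krull dimension explicitly at the outset.
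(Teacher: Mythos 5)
Your proof is correct and follows essentially the same route as the paper: both rely on the spanning class of generalized skyscraper sheaves (Proposition~\ref{5.1}) and the existence of both adjoints (Lemma~\ref{5.2}), and both run the standard cone argument; the only difference is a dual reshuffling (you show the counit of the left adjoint is an isomorphism on the spanning class and then the unit of the right adjoint is an isomorphism everywhere, whereas the paper does the reverse), which is an equivalent bookkeeping choice rather than a genuinely different argument.
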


\begin{proof}
By Lemma \ref{5.2}, $F$ has both a left adjoint $L$ and a right adjoint $R$. Therefore for any pair of generalized points $(x,\xi)$ and $(y,\nu)$, we have$$\operatorname{Ext}_{\mathcal{O}_{\mathcal{X}}}^{*}(\mathcal{O}_{x,\xi},\mathcal{O}_{y,\nu})\cong\operatorname{Ext}_{\mathcal{O}_{\mathcal{Y}}}^{*}(F(\mathcal{O}_{x,\xi}),F(\mathcal{O}_{y,\nu}))\cong\operatorname{Ext}_{\mathcal{O}_{\mathcal{Y}}}^{*}(\mathcal{O}_{x,\xi},RF(\mathcal{O}_{y,\nu})).$$Since the generalied skyscraper sheaves form a spanning class of $D_{coh}^{b}(\mathcal{X})$, we conclude that the unit of the adjunction $id_{D_{coh}^{b}(\mathcal{X})}\to RF$ is an isomorphism on the generalized skyscraper sheaves. Let $K$ be an object in $D_{coh}^{b}(\mathcal{X})$ and $(x,\xi)$ a generalized point on $\mathcal{X}$. Then we have
\begin{align*}
    \operatorname{Ext}_{\mathcal{O}_{\mathcal{X}}}^{*}(LF(K),\mathcal{O}_{x,\xi})&\cong\operatorname{Ext}_{\mathcal{O}_{\mathcal{X}}}^{*}(F(K),F(\mathcal{O}_{x,\xi}))\\
    &\cong\operatorname{Ext}_{\mathcal{O}_{\mathcal{X}}}^{*}(K,RF(\mathcal{O}_{x,\xi}))\\
    &\cong\operatorname{Ext}_{\mathcal{O}_{\mathcal{X}}}^{*}(K,\mathcal{O}_{x,\xi}).
\end{align*}
Since the sheaves $\mathcal{O}_{x,\xi}$ form a spanning class of $D_{coh}^{b}(\mathcal{X})$, we conclude that the counit of the adjunction $LF\to id_{D_{coh}^{b}(\mathcal{X})}$ is an isomorphism. Hence, $F$ is fully faithful.
\end{proof}

\begin{lemma}\label{5.4}
Let $\mathcal{X},\mathcal{Y}$ be smooth proper and tame algebraic stacks over $k$. Let $F,G\colon D_{coh}^{b}(\mathcal{X})\to D_{coh}^{b}(\mathcal{Y})$ be $k$-linear exact functors. If $F$ is fully faithful and $F(\mathcal{O}_{x,\xi})\cong G(\mathcal{O}_{x,\xi})$ for every generalized point $(x,\xi)$ of $\mathcal{X}$, then the essential image of $G$ is contained in the essential image of $F$.
\end{lemma}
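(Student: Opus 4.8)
The plan is to show that $G(M)$ lies in the essential image of $F$ for every $M\in D^{b}_{coh}(\mathcal{X})$. By Lemma~\ref{5.2}, both $F$ and $G$ have right adjoints; write $R$ for a right adjoint of $F$ and $H$ for a right adjoint of $G$, both functors $D^{b}_{coh}(\mathcal{Y})\to D^{b}_{coh}(\mathcal{X})$. Fix $M$ and set $N=G(M)$. Completing the counit $\varepsilon\colon FR(N)\to N$ to a distinguished triangle
\[
FR(N)\xrightarrow{\ \varepsilon\ }N\longrightarrow C\longrightarrow FR(N)[1],
\]
the morphism $R(\varepsilon)$ is an isomorphism because $F$ is fully faithful, so applying $R$ gives $R(C)=0$. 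It therefore suffices to prove $C=0$, since then $N\cong FR(N)$ is visibly in the essential image of $F$.

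The key step is the inclusion $\ker R\subseteq\ker H$ among subcategories of $D^{b}_{coh}(\mathcal{Y})$; this is the only place the hypothesis $F(\mathcal{O}_{x,\xi})\cong G(\mathcal{O}_{x,\xi})$ is used. Let $N'\in D^{b}_{coh}(\mathcal{Y})$ with $R(N')=0$. For every generalized point $(x,\xi)$ of $\mathcal{X}$, the adjunctions $G\dashv H$ and $F\dashv R$ together with the hypothesis give
\[
\operatorname{Ext}^{*}_{\mathcal{O}_{\mathcal{X}}}(\mathcal{O}_{x,\xi},H(N'))\cong\operatorname{Ext}^{*}_{\mathcal{O}_{\mathcal{Y}}}(G(\mathcal{O}_{x,\xi}),N')\cong\operatorname{Ext}^{*}_{\mathcal{O}_{\mathcal{Y}}}(F(\mathcal{O}_{x,\xi}),N')\cong\operatorname{Ext}^{*}_{\mathcal{O}_{\mathcal{X}}}(\mathcal{O}_{x,\xi},R(N'))=0.
\]
Since $\mathcal{X}$ is smooth, proper and tame it satisfies the Thomason condition, so the generalized skyscraper sheaves form a spanning class for $D^{b}_{coh}(\mathcal{X})$ by Proposition~\ref{5.1}, and hence $H(N')=0$.

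To finish, apply this to $C$: from $R(C)=0$ we obtain $H(C)=0$, hence $\operatorname{Ext}^{*}_{\mathcal{O}_{\mathcal{Y}}}(N,C)\cong\operatorname{Ext}^{*}_{\mathcal{O}_{\mathcal{X}}}(M,H(C))=0$ by $G\dashv H$, and $\operatorname{Ext}^{*}_{\mathcal{O}_{\mathcal{Y}}}(FR(N),C)\cong\operatorname{Ext}^{*}_{\mathcal{O}_{\mathcal{X}}}(R(N),R(C))=0$ by $F\dashv R$. The long exact sequence of $\operatorname{Ext}^{*}_{\mathcal{O}_{\mathcal{Y}}}(-,C)$-groups attached to the triangle above then gives $\operatorname{Ext}^{*}_{\mathcal{O}_{\mathcal{Y}}}(C,C)=0$, in particular $\mathrm{id}_{C}=0$, so $C=0$. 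Thus $G(M)\cong FR(G(M))$ lies in the essential image of $F$, and since $M$ was arbitrary the essential image of $G$ is contained in that of $F$.

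I expect the inclusion $\ker R\subseteq\ker H$ to be the main obstacle: everything else is formal bookkeeping with adjunctions and triangles, whereas that step genuinely needs Proposition~\ref{5.1} to promote the pointwise identification $F(\mathcal{O}_{x,\xi})\cong G(\mathcal{O}_{x,\xi})$ to a statement about arbitrary objects. It is worth noting that only an abstract isomorphism of graded $\operatorname{Ext}$-groups is used here, not a natural one, which is all the hypothesis provides.
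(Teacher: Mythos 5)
Your proof is correct and takes essentially the same approach as the paper's: the key step in both is using the spanning-class property of generalized skyscraper sheaves (Proposition~\ref{5.1}) to deduce that the right adjoint of $G$ vanishes on the right orthogonal complement of the essential image of $F$, and your inclusion $\ker R\subseteq\ker H$ is exactly that statement since $\ker R=(\operatorname{im}F)^{\perp}$. The only difference is that the paper phrases the remaining formal step via admissibility of $\operatorname{im}(F)$ and leaves the conclusion implicit, whereas you unwind the counit-triangle argument in full.
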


\begin{proof}
We know that $F$ and $G$ are both bounded, and $G$ has a right adjoint $R$ by Lemma \ref{5.2}. Let $\mathcal{A}$ be the essential image of $F$ in $D_{coh}^{b}(\mathcal{Y})$. Then $\mathcal{A}$ is an admissible subcategory of $D_{coh}^{b}(\mathcal{Y})$. Let $(x,\xi)$ be a generalized point of $\mathcal{X}$. Let $M$ be a complex in $\mathcal{A}^\perp$. Then we have$$\operatorname{Ext}_{\mathcal{O}_{\mathcal{Y}}}^{*}(F(\mathcal{O}_{x,\xi}),M)\cong\operatorname{Ext}_{\mathcal{O}_{\mathcal{Y}}}^{*}(G(\mathcal{O}_{x,\xi}),M)\cong\operatorname{Ext}_{\mathcal{O}_{\mathcal{X}}}^{*}(\mathcal{O}_{x,\xi},R(M))=0.$$Since the generalized skyscraper sheaves form a spanning class of $D_{coh}^{b}(\mathcal{X})$, we conclude that $R(M)=0$. Hence, the essential image of $G$ must be contained in $\mathcal{A}$.
\end{proof}

\begin{lemma}\label{5.5}
Let $\mathcal{X}$ be a smooth proper and tame algebraic stack over $k$. Let $F\colon D_{coh}^{b}(\mathcal{X})\to D_{coh}^{b}(\mathcal{X})$ be a $k$-linear exact functor. If $F(\mathcal{F})\cong\mathcal{F}$ for all coherent sheaves $\mathcal{F}$ on $\mathcal{X}$ with zero dimensional support, then $F$ is an equivalence.
\end{lemma}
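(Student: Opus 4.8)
Following Olander's argument for algebraic spaces, the plan is to prove that $F$ is fully faithful and then deduce that it is essentially surjective, so that $F$ is an equivalence. For the second step, granting full faithfulness, I would apply Lemma~\ref{5.4} with $G=\operatorname{id}_{D^b_{coh}(\mathcal{X})}$: every generalized skyscraper $\mathcal{O}_{x,\xi}$ is a coherent sheaf with zero-dimensional support, so the hypothesis gives $F(\mathcal{O}_{x,\xi})\cong\mathcal{O}_{x,\xi}=\operatorname{id}(\mathcal{O}_{x,\xi})$, and Lemma~\ref{5.4} then forces the essential image of $\operatorname{id}$ --- all of $D^b_{coh}(\mathcal{X})$ --- to be contained in that of $F$. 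For full faithfulness I would invoke Lemma~\ref{5.3}, which reduces the problem to showing that for every pair of generalized points $(x,\xi),(y,\nu)$ the natural map $F^{*}\colon\operatorname{Ext}^{*}_{\mathcal{O}_{\mathcal{X}}}(\mathcal{O}_{x,\xi},\mathcal{O}_{y,\nu})\to\operatorname{Ext}^{*}_{\mathcal{O}_{\mathcal{X}}}(F\mathcal{O}_{x,\xi},F\mathcal{O}_{y,\nu})$ is an isomorphism. If $x\neq y$ both sides vanish, since $\mathcal{O}_{x,\xi}$, $\mathcal{O}_{y,\nu}$ and their images (isomorphic to them) have disjoint supports; so there is nothing to check.

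Fix a closed point $x$ and let $\mathcal{V}=\bigoplus_{\xi}\mathcal{O}_{x,\xi}$, the sum running over the finitely many isomorphism classes of simple objects $\xi\in\operatorname{QCoh}(\mathcal{G}_{x})$. By properness $A:=\operatorname{Ext}^{*}_{\mathcal{O}_{\mathcal{X}}}(\mathcal{V},\mathcal{V})$ is a finite-dimensional graded $k$-algebra, and after choosing isomorphisms $F(\mathcal{O}_{x,\xi})\cong\mathcal{O}_{x,\xi}$ the functor induces a graded $k$-algebra endomorphism $F^{*}\colon A\to A$ fixing the unit and the orthogonal idempotents $e_{\xi}=\operatorname{id}_{\mathcal{O}_{x,\xi}}$. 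Since $A$ is finite-dimensional it suffices to show $F^{*}$ is injective, which may be checked degree by degree. Here I would use smoothness: the residual gerbe $i_{x}\colon\mathcal{G}_{x}\hookrightarrow\mathcal{X}$ is then a regular closed immersion of codimension $d=\dim\mathcal{X}$, and since $\mathcal{X}$ is tame $\mathcal{G}_{x}$ is cohomologically trivial for coherent sheaves, so the associated Koszul resolution identifies $A$ with the twisted exterior algebra whose degree-$j$ part is $\operatorname{Hom}_{\mathcal{O}_{\mathcal{G}_{x}}}(\mathcal{V}|_{\mathcal{G}_{x}},\mathcal{V}|_{\mathcal{G}_{x}}\otimes\Lambda^{j}N_{\mathcal{G}_{x}/\mathcal{X}})$. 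In particular $A$ is generated over $A^{0}$ by $A^{1}$, so any graded algebra endomorphism that is bijective in degrees $0$ and $1$ is automatically bijective; it thus suffices to treat those two degrees.

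In degree $0$, Schur's lemma gives $A^{0}=\prod_{\xi}\operatorname{End}_{\mathcal{O}_{\mathcal{X}}}(\mathcal{O}_{x,\xi})$, a finite product of finite-dimensional division $k$-algebras; $F^{*}$ respects this decomposition because it fixes the $e_{\xi}$, and a $k$-linear ring endomorphism of a finite-dimensional division algebra is bijective, so $F^{*}$ is an isomorphism on $A^{0}$. Degree $1$ is where the hypothesis enters essentially: a nonzero class $e\in\operatorname{Ext}^{1}_{\mathcal{O}_{\mathcal{X}}}(\mathcal{O}_{x,\xi},\mathcal{O}_{x,\nu})$ corresponds to a non-split short exact sequence $0\to\mathcal{O}_{x,\nu}\to\mathcal{E}\to\mathcal{O}_{x,\xi}\to 0$ with $\mathcal{E}$ zero-dimensional, and applying the exact functor $F$ and passing to cohomology (all three terms sit in degree $0$) yields a short exact sequence $0\to F\mathcal{O}_{x,\nu}\to F\mathcal{E}\to F\mathcal{O}_{x,\xi}\to 0$ representing $F^{*}(e)$, with $F\mathcal{E}\cong\mathcal{E}$; if $F^{*}(e)$ vanished this sequence would split, forcing $\mathcal{E}\cong\mathcal{O}_{x,\nu}\oplus\mathcal{O}_{x,\xi}$, whence $\mathcal{E}$ is semisimple, the subobject $\mathcal{O}_{x,\nu}$ is a direct summand, and the original sequence splits --- contradicting $e\neq 0$. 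Hence $F^{*}$ is injective, and therefore bijective, on $A^{1}$, which completes the verification of the hypothesis of Lemma~\ref{5.3}. I expect the main obstacle to be the structural claim of the previous paragraph --- that $A$ is a twisted exterior algebra generated in degrees $0$ and $1$ --- which requires genuine care with the residual gerbe ($i_{x}$ being a regular immersion of the correct codimension and $\mathcal{G}_{x}$ being cohomologically trivial) and, in positive characteristic, with the exactness properties underlying the Koszul computation, where one must lean on tameness, i.e.\ linear reductivity of the stabilizers.
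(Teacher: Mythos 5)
Your overall architecture matches the paper's: reduce to full faithfulness via Lemma~\ref{5.3}, handle distinct points by support vanishing, handle a single point via the structure of the self-Ext algebra, use surjectivity of composition from degree~$1$, and finish essential surjectivity via Lemma~\ref{5.4} with $G=\operatorname{id}$. The genuine divergence is in the same-point case, and it is to your credit. The paper works only with $\operatorname{Ext}^{\ast}_{\mathcal{O}_{\mathcal{X}}}(\mathcal{O}_{x,\xi},\mathcal{O}_{x,\xi})$ for a single $\xi$ and claims that the $i$-fold composition map from $\operatorname{Ext}^{1}(\mathcal{O}_{x,\xi},\mathcal{O}_{x,\xi})^{\otimes i}$ onto $\operatorname{Ext}^{i}(\mathcal{O}_{x,\xi},\mathcal{O}_{x,\xi})$ is surjective. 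That is the correct statement for algebraic spaces, where the residual gerbe is a point and the Ext-algebra is the exterior algebra on $\operatorname{Ext}^{1}$, but it is \emph{false} for stacks once the stabilizer acts nontrivially on the tangent space. For instance at the origin of $[\mathbb{A}^{2}/(\mathbb{Z}/2)]$ with the $\pm 1$-action and $\xi$ the trivial character, $\operatorname{Ext}^{1}(\mathcal{O}_{x,\xi},\mathcal{O}_{x,\xi})=0$ while $\operatorname{Ext}^{2}(\mathcal{O}_{x,\xi},\mathcal{O}_{x,\xi})=k$; the degree-two classes pass through the sign character and cannot be produced from the diagonal piece alone. Your move of replacing the single simple by $\mathcal{V}=\bigoplus_{\xi}\mathcal{O}_{x,\xi}$ and working in the full matrix-like algebra $A=\operatorname{Ext}^{\ast}(\mathcal{V},\mathcal{V})$ is exactly the repair that the stacky setting requires, and your degree-$0$ and degree-$1$ arguments (division algebras, non-split extensions of zero-dimensional sheaves) are fine as written.

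The one thing you owe, and you flag it yourself, is the structural claim that $A$ is generated over $A^{0}$ by $A^{1}$. This is where the real content lives; it is not automatic, and it is precisely the place where the paper's shortcut fails, so it must be proved rather than invoked. The route you sketch is the right one: since $\mathcal{X}$ is smooth and tame with finite diagonal, near $x$ it is a quotient $[V/G]$ with $G$ finite linearly reductive fixing the origin of a smooth affine $V$, and the $G$-equivariant Koszul resolution of the origin gives, after pushing forward, a locally free resolution of $\mathcal{O}_{x,\xi}$; semisimplicity of $\operatorname{QCoh}(\mathcal{G}_{x})$ (tameness) collapses the spectral sequence and identifies $A^{j}\cong\operatorname{Hom}_{\mathcal{G}_{x}}(\mathcal{V},\mathcal{V}\otimes\Lambda^{j}N)$ with the Yoneda product realized by the wedge pairing. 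Generation in degree~$1$ then comes down to the split surjectivity of $N^{\otimes j}\to\Lambda^{j}N$ in $\operatorname{Rep}(G_{x})$, valid by linear reductivity, together with the fact that every simple appearing in $\mathcal{V}\otimes N$ already occurs in $\mathcal{V}$ because $\mathcal{V}$ contains \emph{all} simples. Writing this out carefully is what completes the proof; as it stands your proposal has a real gap there, but the strategy is sound and is in fact more robust than the paper's own argument at the very same step.
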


\begin{proof}
We prove that $F$ fully faithful by showing the natural map$$F^{i}\colon\operatorname{Ext}_{\mathcal{O}_{\mathcal{X}}}^{i}(\mathcal{O}_{x,\xi},\mathcal{O}_{y,\nu})\to\operatorname{Ext}_{\mathcal{O}_{\mathcal{X}}}^{i}(F(\mathcal{O}_{x,\xi}),F(\mathcal{O}_{y,\nu}))$$is an isomorphism for all $i$ and all generalized points $(x,\xi)$ and $(y,\nu)$ of $\mathcal{X}$. If $(x,\xi)$ and $(y,\nu)$ are distinct, then both sides vanish. If $(x,\xi)=(y,\nu)$, we need to show that $F^{i}$ is an isomorphism for each $i$. 

We first observe that the source and target of $F^{i}$ are isomorphic to each other by our assumption. Therefore, it suffices to show that $F^{i}$ is surjective, or equivalently injective, for all $i$. For $i\leq 0$, both the source and target vanish, so there is nothing to prove. For $i=0$, we know that $\xi$ is simple. It follows from Schur's lemma that every element in $\operatorname{Hom}(\mathcal{O}_{x,\xi},\mathcal{O}_{x,\xi})$ is an isomorphism. Since $F$ is fully faithful, it preserves and reflects isomorphisms. It follows that $F^{0}$ is an isomorphism. For $i>1$, there exists a surjective map$$\operatorname{Ext}_{\mathcal{O}_{\mathcal{X}}}^{1}(\mathcal{O}_{x,\xi},\mathcal{O}_{x,\xi})\otimes\cdots\otimes\operatorname{Ext}_{\mathcal{O}_{\mathcal{X}}}^{1}(\mathcal{O}_{x,\xi},\mathcal{O}_{x,\xi})\to\operatorname{Ext}_{\mathcal{O}_{\mathcal{X}}}^{i}(\mathcal{O}_{x,\xi},\mathcal{O}_{x,\xi})$$given by composition of extension classes. This tells us that the surjectivity of $F^{1}$ would imply the surjectivity of $F^{i}$ for all $i>1$. 

It remains to show $F^{1}$ is surjective or equivalently injective. Observe that any nonzero element $\alpha$ in $\operatorname{Ext}_{\mathcal{O}_{\mathcal{X}}}^{1}(F(\mathcal{O}_{x,\xi}),F(\mathcal{O}_{x,\xi}))$ corresponds to a non-split extension$$0\rightarrow\mathcal{O}_{x,\xi}\rightarrow\mathcal{F}\rightarrow\mathcal{O}_{x,\xi}\rightarrow 0.$$Note that the support of $\mathcal{F}$ is also zero-dimensional. We see that $F(\alpha)$ corresponds to the extension$$0\rightarrow F(\mathcal{O}_{x,\xi})\rightarrow F(\mathcal{F})\rightarrow F(\mathcal{O}_{x,\xi})\rightarrow 0.$$By assumption, we have $F(\mathcal{O}_{x,\xi})\cong\mathcal{O}_{x,\xi}$, and $F(\mathcal{F})\cong\mathcal{F}$. This tells us that the extension corresponding to $F(\alpha)$ does not split either. So $F(\alpha)$ is nonzero. Therefore $F^{1}$ must be injective. This shows that $F$ is fully faithful by Lemma \ref{5.3}. Applying Lemma \ref{5.4} to $F$ and $id_{D_{coh}^{b}(\mathcal{X})}$, we know that $F$ is also essentially surjective. This finishes the proof.
\end{proof}

\begin{lemma}\label{5.6}
Let $\mathcal{X}$ be a smooth proper and tame algebraic stack over $k$. Let $F\colon D_{coh}^{b}(\mathcal{X})\to D_{coh}^{b}(\mathcal{X})$ be a $k$-linear exact functor. If $F(\mathcal{F})\cong\mathcal{F}$ for all coherent sheaves $\mathcal{F}$ on $\mathcal{X}$ with zero-dimensional support, then $F$ restricts to an autoequivalence on $\operatorname{Coh}(\mathcal{X})$.
\end{lemma}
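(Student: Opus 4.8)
The plan is to deduce this from Lemma \ref{5.5} together with a short $t$-exactness argument. By Lemma \ref{5.5}, $F$ is an equivalence of $D_{coh}^{b}(\mathcal{X})$; fix a quasi-inverse $F^{-1}$. Note that $F^{-1}$ again satisfies the hypothesis of the lemma: if $\mathcal{G}$ has zero-dimensional support, then $F^{-1}(\mathcal{G})\cong F^{-1}(F(\mathcal{G}))\cong\mathcal{G}$, using $F(\mathcal{G})\cong\mathcal{G}$. In particular $F(\mathcal{O}_{x,\xi})\cong\mathcal{O}_{x,\xi}$ and $F^{-1}(\mathcal{O}_{x,\xi})\cong\mathcal{O}_{x,\xi}$ for every generalized closed point $(x,\xi)$, since $\mathcal{O}_{x,\xi}$ has zero-dimensional support. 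It therefore suffices to prove that $F$ sends coherent sheaves to coherent sheaves; the same statement for $F^{-1}$ then exhibits the restriction of $F$ as an autoequivalence of $\operatorname{Coh}(\mathcal{X})$ with quasi-inverse the restriction of $F^{-1}$, which is the desired conclusion. Writing $D^{\leq 0}$ and $D^{\geq 0}$ for the subcategories of complexes with cohomology sheaves concentrated in nonpositive, resp.\ nonnegative, degrees, the goal is to show $F(\mathcal{F})\in D^{\leq 0}\cap D^{\geq 0}$ for every coherent sheaf $\mathcal{F}$.

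The key input is a detection criterion: a complex $E\in D_{coh}^{b}(\mathcal{X})$ lies in $D^{\leq 0}$ if and only if $\operatorname{Ext}_{\mathcal{O}_{\mathcal{X}}}^{i}(E,\mathcal{O}_{x,\xi})=0$ for all $i<0$ and all generalized points $(x,\xi)$ of $\mathcal{X}$. The forward direction follows from the hyperext spectral sequence $\operatorname{Ext}_{\mathcal{O}_{\mathcal{X}}}^{p}(H^{q}(E),\mathcal{O}_{x,\xi})\Rightarrow\operatorname{Ext}_{\mathcal{O}_{\mathcal{X}}}^{p-q}(E,\mathcal{O}_{x,\xi})$, using that $\operatorname{Ext}^{p}$ between coherent sheaves vanishes for $p<0$ and that $H^{q}(E)=0$ for $q>0$. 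For the converse, suppose $E$ has nonzero cohomology in positive degrees and let $b>0$ be the largest such degree. Choosing a closed point $x$ in the support of $H^{b}(E)$, the pullback $i_{x}^{*}H^{b}(E)$ is a nonzero coherent sheaf on the residual gerbe $\mathcal{G}_{x}$, which has finite length because $\mathcal{X}$ is tame; picking a simple quotient $\xi$ of it gives $0\neq\operatorname{Hom}_{\mathcal{O}_{\mathcal{G}_{x}}}(i_{x}^{*}H^{b}(E),\xi)\cong\operatorname{Hom}_{\mathcal{O}_{\mathcal{X}}}(H^{b}(E),\mathcal{O}_{x,\xi})$, and a quick differential count in the spectral sequence identifies this group with $\operatorname{Ext}_{\mathcal{O}_{\mathcal{X}}}^{-b}(E,\mathcal{O}_{x,\xi})$, which is therefore nonzero. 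Granting this, for any coherent sheaf $\mathcal{F}$ and any generalized point, full faithfulness of $F$ together with $F(\mathcal{O}_{x,\xi})\cong\mathcal{O}_{x,\xi}$ gives $\operatorname{Ext}_{\mathcal{O}_{\mathcal{X}}}^{i}(F(\mathcal{F}),\mathcal{O}_{x,\xi})\cong\operatorname{Ext}_{\mathcal{O}_{\mathcal{X}}}^{i}(F(\mathcal{F}),F(\mathcal{O}_{x,\xi}))\cong\operatorname{Ext}_{\mathcal{O}_{\mathcal{X}}}^{i}(\mathcal{F},\mathcal{O}_{x,\xi})$, which vanishes for $i<0$ since $\mathcal{F}$ and $\mathcal{O}_{x,\xi}$ are sheaves. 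Hence $F(\mathcal{F})\in D^{\leq 0}$, and the identical argument applied to $F^{-1}$ shows $F^{-1}(\mathcal{F})\in D^{\leq 0}$ for every coherent sheaf $\mathcal{F}$.

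Finally, to see $F(\mathcal{F})\in D^{\geq 0}$, suppose not, and let $t_{0}<0$ be the smallest degree with $H^{t_{0}}(F(\mathcal{F}))\neq 0$. Then the truncation $\tau^{\leq t_{0}}F(\mathcal{F})$ equals $H^{t_{0}}(F(\mathcal{F}))[-t_{0}]$, and the canonical morphism $\tau^{\leq t_{0}}F(\mathcal{F})\to F(\mathcal{F})$ is nonzero, being an isomorphism on $H^{t_{0}}$. Applying the equivalence $F^{-1}$ produces a nonzero morphism $F^{-1}\!\big(H^{t_{0}}(F(\mathcal{F}))\big)[-t_{0}]\to\mathcal{F}$. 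But $F^{-1}\!\big(H^{t_{0}}(F(\mathcal{F}))\big)\in D^{\leq 0}$ by the previous paragraph, so its shift by $[-t_{0}]$, with $-t_{0}\geq 1$, lies in $D^{\leq -1}$; and $\operatorname{Hom}_{D_{coh}^{b}(\mathcal{X})}(A,\mathcal{F})=0$ for any $A\in D^{\leq -1}$ because $\mathcal{F}\in D^{\geq 0}$. This contradiction shows $F(\mathcal{F})$ is concentrated in degree $0$, completing the argument. I expect the main obstacle to be the detection criterion of the second paragraph, specifically the assertion that every nonzero coherent sheaf on $\mathcal{X}$ maps nontrivially to some generalized skyscraper sheaf; this is exactly where tameness is used, through the finite length of the categories of coherent sheaves on residual gerbes (compare the spanning-class statement of Proposition \ref{5.1}).
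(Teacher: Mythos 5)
Your proposal is correct and takes essentially the same approach as the paper: both reduce to showing $F$ preserves coherent sheaves, both detect membership in $D^{\leq 0}$ by vanishing of $\operatorname{Ext}^{i}(-,\mathcal{O}_{x,\xi})$ for $i<0$ against generalized skyscraper sheaves (you spell this detection criterion out carefully, while the paper invokes it implicitly), and both then apply $F^{-1}$ to the would-be non-sheafy part and derive a contradiction from $t$-structure orthogonality. The only cosmetic difference is at the last step: you truncate from below ($\tau^{\leq t_{0}}F(\mathcal{F})\cong H^{t_{0}}(F(\mathcal{F}))[-t_{0}]$, with $t_{0}$ minimal) and get a nonzero map into $\mathcal{F}$ from something in $D^{\leq -1}$, while the paper truncates from above ($\tau^{<0}F(\mathcal{F})\to F(\mathcal{F})\to H^{0}(F(\mathcal{F}))$) and shows the connecting map $F^{-1}\tau^{<0}F(\mathcal{F})\to\mathcal{F}$ must vanish for the same orthogonality reason; the two are interchangeable.
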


\begin{proof}
By Lemma \ref{5.5}, it suffices to show that $F$ preserves coherent sheaves on $\mathcal{X}$. Let $\mathcal{F}$ be a coherent sheaf on $\mathcal{X}$. Since $F$ is fully faithful, we have$$\operatorname{Ext}_{\mathcal{O}_{\mathcal{X}}}^{i}(\mathcal{F},\mathcal{O}_{x,\xi})\cong\operatorname{Ext}_{\mathcal{O}_{\mathcal{X}}}^{i}(F(\mathcal{F}),F(\mathcal{O}_{x,\xi}))\cong\operatorname{Ext}_{\mathcal{O}_{\mathcal{X}}}^{i}(F(\mathcal{F}),\mathcal{O}_{x,\xi})$$for all $i$ and for all generalized points $(x,\xi)$. Therefore $F(\mathcal{F})$ has non-zero cohomology only in degree at most 0. Then there is a distinguished triangle$$\tau^{<0}F(\mathcal{F})\longrightarrow F(\mathcal{F})\longrightarrow H^{0}(F(\mathcal{F}))\longrightarrow\tau^{<0}F(\mathcal{F})[1].$$Since $F$ is an equivalence, we have a distinguished triangle$$F^{-1}\tau^{<0}F(\mathcal{F})\longrightarrow \mathcal{F}\longrightarrow F^{-1}H^{0}(F(\mathcal{F}))\longrightarrow F^{-1}\tau^{<0}F(\mathcal{F})[1].$$Note that $\tau^{<0}F(\mathcal{F})$ has nonzero cohomology only in degrees less than 0. We claim that so does $F^{-1}\tau^{<0}F(\mathcal{F})$. To see this, let $j$ be the largest integer such that $H^{j}(F^{-1}\tau^{<0}F(\mathcal{F}))\neq 0$. Then there exists a generalized point $(x,\xi)$ of $\mathcal{X}$ such that $\operatorname{Ext}_{\mathcal{O}_{\mathcal{X}}}^{-j}(F^{-1}\tau^{<0}F(\mathcal{F}),\mathcal{O}_{x,\xi})\neq 0$. By assumption $F$ is fully faithful and $F(\mathcal{O}_{x,\xi})\cong\mathcal{O}_{x,\xi}$. It follows that $\operatorname{Ext}_{\mathcal{O}_{\mathcal{X}}}^{-j}(\tau^{<0}F(\mathcal{F}),\mathcal{O}_{x,\xi})\neq 0$. This implies that $j<0$ as claimed. Therefore there are no nonzero maps from $F^{-1}\tau^{<0}F(\mathcal{F})$ to $\mathcal{F}$. Since $F$ is an equivalence, there are no nonzero maps from $\tau^{<0}F(\mathcal{F})$ to $F(\mathcal{F})$. This tells us that $F(\mathcal{F})$ is concentrated in degree 0.
\end{proof}

We conclude this section with an immediate consequence of Theorem \ref{1.2} and Lemma \ref{5.5}.

\begin{corollary}\label{5.7}
Let $\mathcal{X}$ be a smooth proper and tame algebraic stack over $k$. Let $F\colon D_{coh}^{b}(\mathcal{X})\to D_{coh}^{b}(\mathcal{X})$ be a $k$-linear exact functor. If $F(\mathcal{F})\cong\mathcal{F}$ for all coherent sheaves $\mathcal{F}$ on $\mathcal{X}$ with zero-dimensional support, then $F$ is naturally isomorphic to a Fourier--Mukai transform when restricted to $\operatorname{Coh}(\mathcal{X})$.
\end{corollary}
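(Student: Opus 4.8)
The plan is to deduce this directly from Theorem \ref{1.2} together with Lemmas \ref{5.5} and \ref{5.6}. By Lemma \ref{5.5} the hypothesis already forces $F$ to be an equivalence of $D_{coh}^{b}(\mathcal{X})$, and by Lemma \ref{5.6} it restricts to an autoequivalence of the abelian category $\operatorname{Coh}(\mathcal{X})$; in particular $F|_{\operatorname{Coh}(\mathcal{X})}\colon\operatorname{Coh}(\mathcal{X})\to\operatorname{Coh}(\mathcal{X})$ is a $k$-linear exact functor of abelian categories (any equivalence of abelian categories being exact). So it suffices to identify this abelian functor with the restriction to $\operatorname{Coh}(\mathcal{X})$ of a Fourier--Mukai transform $D_{coh}^{b}(\mathcal{X})\to D_{coh}^{b}(\mathcal{X})$.

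Next I would apply Theorem \ref{1.2} with $\mathcal{Y}=\mathcal{X}$. Since $\mathcal{X}$ is tame it has finite inertia, hence finite diagonal, and being proper over $k$ it is of finite type over $k$, so the hypotheses of Theorem \ref{1.2} are met. Consequently $F|_{\operatorname{Coh}(\mathcal{X})}$ corresponds to a coherent sheaf $\mathcal{K}$ on $\mathcal{X}\times_{k}\mathcal{X}$ which is flat over the first factor and has support proper and quasi-finite over the second factor, in such a way that $F(\mathcal{F})\cong p_{2,*}(p_{1}^{*}\mathcal{F}\otimes\mathcal{K})$ naturally in $\mathcal{F}\in\operatorname{Coh}(\mathcal{X})$. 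Moreover one may take the kernel produced by the quasi-inverse constructed in the proof of Theorem \ref{1.2}, which by Proposition \ref{A.4} additionally satisfies $R^{i}p_{2,*}(p_{1}^{*}\mathcal{E}\otimes\mathcal{K})=0$ for all $i>0$ and every coherent sheaf $\mathcal{E}$ on $\mathcal{X}$.

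Finally I would check that the underived functor $p_{2,*}(p_{1}^{*}(-)\otimes\mathcal{K})$ agrees on $\operatorname{Coh}(\mathcal{X})$ with the derived Fourier--Mukai transform $\Phi_{\mathcal{K}}^{\mathcal{X}\to\mathcal{X}}(-)=Rp_{2,*}(Lp_{1}^{*}(-)\otimes^{\mathbb{L}}\mathcal{K})$. For $\mathcal{F}\in\operatorname{Coh}(\mathcal{X})$ one has $Lp_{1}^{*}\mathcal{F}=p_{1}^{*}\mathcal{F}$ since $p_{1}$ is flat (it is the base change to $\mathcal{X}$ of the structure morphism $\mathcal{X}\to\operatorname{Spec}k$), and $p_{1}^{*}\mathcal{F}\otimes^{\mathbb{L}}\mathcal{K}=p_{1}^{*}\mathcal{F}\otimes\mathcal{K}$ because $\mathcal{K}$ is flat over $\mathcal{X}$ through $p_{1}$; the vanishing $R^{i}p_{2,*}(p_{1}^{*}\mathcal{F}\otimes\mathcal{K})=0$ for $i>0$ then gives $\Phi_{\mathcal{K}}^{\mathcal{X}\to\mathcal{X}}(\mathcal{F})\simeq p_{2,*}(p_{1}^{*}\mathcal{F}\otimes\mathcal{K})\cong F(\mathcal{F})$, naturally in $\mathcal{F}$. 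Hence $F$ restricted to $\operatorname{Coh}(\mathcal{X})$ is naturally isomorphic to the Fourier--Mukai transform with kernel $\mathcal{K}$. The one point requiring care is this last step: one should use the specific kernel furnished by Theorem \ref{1.2}, so that the higher-pushforward vanishing is available, rather than an arbitrary object of the category in Theorem \ref{1.2}\ref{1.3(2)}; alternatively one argues directly, via the Keel--Mori and good-moduli-space reduction used in Lemma \ref{3.4}, that tameness makes $Rp_{2,*}$ exact on sheaves supported properly and quasi-finitely over $\mathcal{X}$. Everything else is formal.
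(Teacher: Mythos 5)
Your proposal is correct and takes essentially the same route the paper intends: Lemma \ref{5.5} (and then Lemma \ref{5.6}) to reduce to an exact autoequivalence of $\operatorname{Coh}(\mathcal{X})$, Theorem \ref{1.2} to produce the coherent kernel $\mathcal{K}$, and then the comparison of the underived functor $p_{2,*}(p_1^*(-)\otimes\mathcal{K})$ with the derived transform $\Phi_{\mathcal{K}}$ on coherent sheaves. The paper labels this "an immediate consequence of Theorem \ref{1.2} and Lemma \ref{5.5}" without spelling out the last comparison; you supply exactly that, and you correctly flag the one real subtlety, namely that one should take the kernel coming from Proposition \ref{A.4} (which carries the higher-pushforward vanishing) or else argue via the proper-quasi-finite support as in Lemma \ref{3.4} — either works. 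The small slip of attributing the flatness of $p_1$ to base change of $\mathcal{X}\to\operatorname{Spec}k$ rather than $\mathcal{Y}\to\operatorname{Spec}k$ is immaterial here since $\mathcal{Y}=\mathcal{X}$.
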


\section{Proof of Theorem \ref{1.1}}\label{6}

We first recall the notion of almost ample sets in a triangulated category due to Canonaco and Stellari.

\begin{definition}[{\cite[Definition\ 2.9]{CS14}}]\label{6.1}
Let $\mathcal{A}$ be an abelian category. We say a set of objects $\{\mathcal{F}_{i}\}_{i\in I}$ in $\mathcal{A}$ is \textit{almost ample} if, for any object $A$ in $\mathcal{A}$, there exists $i\in I$ such that
\begin{enumerate}[topsep=0pt,noitemsep,label=\normalfont(\arabic*)]
    \item there is a surjective morphism $\mathcal{F}_{i}^{\oplus k}\to A$ for some $k>0$, and
    \item $\operatorname{Hom}(A, \mathcal{F}_{i})=0$.
\end{enumerate}
\end{definition}

Let $\mathcal{A}$ be an Abelian category with almost ample sets. Let $F, G$ be two exact functors from $D^{b}(\mathcal{A})$ to another triangulated category. A nice feature of almost ample sets is that if $F$ is fully faithful and $F, G$ are naturally isomorphic when restricted to $\mathcal{A}$, then we $F, G$ must be naturally isomorphic. This is a special case of \cite[Proposition\ 3.3]{CS14}.

Let $\mathcal{X}$ be a quasi-compact algebraic stack of finite type over a field $k$ with finite diagonal. In section 2, we constructed a set $\{\mathcal{F}_{n}\}_{n\in\mathbb{N}}$ of objects in $\operatorname{Coh}(\mathcal{X})$. Then the set $\{\mathcal{F}_{n}\}_{n\in\mathbb{N}}$ is almost ample if $\mathcal{X}$ is reduced and proper with strictly positive dimension. This is an immediate consequence of Lemma \ref{2.2} and \ref{2.5}. Combining this observation with Lemma \ref{5.6}, we obtain the following result.

\begin{lemma}\label{6.2}
Let $\mathcal{X}$ be a smooth proper, and tame algebraic stack of finite type over $k$. Let $F\colon D_{coh}^{b}(\mathcal{X})\to D_{coh}^{b}(\mathcal{X})$ be a $k$-linear exact functor. If $F(\mathcal{F})\cong\mathcal{F}$ for all coherent sheaves $\mathcal{F}$ on $\mathcal{X}$ with zero dimensional support, then $F$ is a Fourier--Mukai autoequivalence.
\end{lemma}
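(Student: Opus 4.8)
The plan is to use the almost ample machinery recalled above to promote the abelian-level statement of Corollary~\ref{5.7} to an isomorphism of functors on all of $D^{b}_{coh}(\mathcal{X})$. By Lemma~\ref{5.5} the functor $F$ is an equivalence, in particular fully faithful, and by Lemma~\ref{5.6} it restricts to an autoequivalence of $\operatorname{Coh}(\mathcal{X})$; by Corollary~\ref{5.7} (i.e.\ Theorem~\ref{1.2} applied to this autoequivalence) there is a coherent sheaf $K$ on $\mathcal{X}\times_{k}\mathcal{X}$, flat over the first factor and with support proper and quasi-finite over the second, such that $F|_{\operatorname{Coh}(\mathcal{X})}\cong\Phi_{K}^{\mathcal{X}\to\mathcal{X}}|_{\operatorname{Coh}(\mathcal{X})}$. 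The flatness and support hypotheses on $K$, together with the smoothness (hence finite cohomological dimension) of $\mathcal{X}$, guarantee that $\Phi_{K}^{\mathcal{X}\to\mathcal{X}}$ is a well-defined exact endofunctor of $D^{b}_{coh}(\mathcal{X})$ whose derived and underived incarnations agree on $\operatorname{Coh}(\mathcal{X})$. So everything comes down to propagating the natural isomorphism from $\operatorname{Coh}(\mathcal{X})$ to $D^{b}_{coh}(\mathcal{X})$.

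Before invoking the almost ample set I would clear away the degenerate cases. Since the autoequivalence $F$ of $\operatorname{Coh}(\mathcal{X})$ fixes every coherent sheaf with zero-dimensional support up to isomorphism, it fixes every generalized skyscraper sheaf and therefore preserves the product decomposition of $\operatorname{Coh}(\mathcal{X})$, hence of $D^{b}_{coh}(\mathcal{X})$, indexed by the connected components of $\mathcal{X}$; as a Fourier--Mukai kernel for $\mathcal{X}$ can be reassembled from kernels on the components by extension by zero, I may assume $\mathcal{X}$ is connected, and hence integral since it is smooth. If $\dim\mathcal{X}=0$, then $\mathcal{X}$ is tame and regular of dimension zero, so every coherent sheaf on $\mathcal{X}$ is locally free with vanishing higher cohomology; hence $\operatorname{Ext}^{i}_{\mathcal{O}_{\mathcal{X}}}(\mathcal{F},\mathcal{G})=0$ for all $i>0$ and all coherent $\mathcal{F},\mathcal{G}$, the category $\operatorname{Coh}(\mathcal{X})$ is semisimple, $D^{b}_{coh}(\mathcal{X})\simeq\bigoplus_{n\in\mathbb{Z}}\operatorname{Coh}(\mathcal{X})[n]$, and the natural isomorphism of Corollary~\ref{5.7} extends formally, one cohomological degree at a time.

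This leaves the case $\dim\mathcal{X}>0$, where $\mathcal{X}$ is integral, proper, of strictly positive dimension, and (being tame) has finite diagonal. Then Lemmas~\ref{2.2} and \ref{2.5} show that the family $\{\mathcal{F}_{n}\}_{n\in\mathbb{N}}$ of Section~\ref{2} is an almost ample set in $\operatorname{Coh}(\mathcal{X})$ in the sense of Definition~\ref{6.1}: condition~(1) holds for all sufficiently large $n$ by Lemma~\ref{2.2} (the surjection there may be taken from a single $\mathcal{F}_{n}$ with $n$ large, using faithfully flat descent along the presentation $\pi$), and condition~(2) holds for all sufficiently large $n$ by Lemma~\ref{2.5}. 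Since $F$ is fully faithful and $F$, $\Phi_{K}^{\mathcal{X}\to\mathcal{X}}$ are exact endofunctors of $D^{b}_{coh}(\mathcal{X})$ that are naturally isomorphic after restriction to $\operatorname{Coh}(\mathcal{X})$, \cite[Proposition~3.3]{CS14} yields a natural isomorphism $F\cong\Phi_{K}^{\mathcal{X}\to\mathcal{X}}$ on all of $D^{b}_{coh}(\mathcal{X})$. As $F$ is an equivalence, so is $\Phi_{K}^{\mathcal{X}\to\mathcal{X}}$, and $F$ is a Fourier--Mukai autoequivalence.

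The step I expect to be the main obstacle is not the almost ample argument itself --- once Corollary~\ref{5.7} is available it essentially carries the positive-dimensional case --- but the careful handling of the degenerate situations: almost ample sets do not exist in a nonzero semisimple category, so the dimension-zero case must be treated by hand, and one has to check that $F$ respects the block decomposition of $D^{b}_{coh}(\mathcal{X})$ coming from the connected components of $\mathcal{X}$ (rather than merely permuting or triangularizing the blocks), so that the reduction to the connected, hence integral, case is legitimate.
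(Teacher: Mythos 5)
Your proof is correct and takes essentially the same route as the paper: reduce to the connected case (which the paper also does, though you justify the block-preservation via the fixed skyscraper sheaves explicitly), treat $\dim\mathcal{X}=0$ by formality of $D^{b}_{coh}(\mathcal{X})$, and for $\dim\mathcal{X}>0$ use Corollary \ref{5.7} together with the almost ample set $\{\mathcal{F}_n\}$ from Lemmas \ref{2.2} and \ref{2.5} and \cite[Proposition~3.3]{CS14}. The extra care you take — checking that $F$ respects the component decomposition rather than permuting blocks, and noting that the surjection in Lemma \ref{2.2} can be taken from a single $\mathcal{F}_n$ by restricting along the inclusions $\mathcal{F}_n\hookrightarrow\mathcal{F}_{n_i}$ — makes explicit two points the paper leaves implicit.
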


\begin{proof}
It follows from Lemma \ref{5.6} that $F$ is an autoequivalence. It remains to show it is a Fourier--Mukai transform. Let $\{\mathcal{X}_{i}\}_{1\leq i\leq n}$ be the set of connected components of $\mathcal{X}$. There exists an orthogonal decomposition$$D_{coh}^{b}(\mathcal{X})\cong\bigoplus_{i=1}^{n} D_{coh}^{b}(\mathcal{X}_{i}).$$The composition$$D_{coh}^{b}(\mathcal{X}_{i})\hookrightarrow D_{coh}^{b}(\mathcal{X})\xrightarrow[]{F}D_{coh}^{b}(\mathcal{X})$$defines an autoequivalence on $D_{coh}^{b}(\mathcal{X}_{i})$ for all $i$. Therefore we may assume that $\mathcal{X}$ is connected. 

We first note that Corollary \ref{5.7} says that $F$ is naturally isomorphic to a Fourier--Mukai transform when restricted to $\operatorname{Coh}(\mathcal{X})$. If $\mathcal{X}$ is zero dimensional, then every object in $D_{coh}^{b}(\mathcal{X})$ is quasi-isomorphic to the direct sum of its cohomology sheaves. This is because the global section functor on $\mathcal{X}$ is exact since $\mathcal{X}$ is tame. The statement is clear in this case. If $\mathcal{X}$ has strictly positive dimension, then $\operatorname{Coh}(\mathcal{X})$ has an almost ample set by the observation above, and the statement follows from \cite[Proposition\ 3.3]{CS14}.
\end{proof}

Now we are in the position to prove Theorem \ref{1.1}.

\begin{proof}[Proof of Theorem \ref{1.1}]
For existence, we know from Lemma \ref{4.3} that there exists a bounded complex $K$ in $D_{coh}^{b}(\mathcal{X}\times_{k}\mathcal{Y})$ such that $F(\mathcal{F})\cong\Phi_{K}^{\mathcal{X\to Y}}(\mathcal{F})$ for any coherent sheaf $\mathcal{F}$ on $\mathcal{X}$ with zero dimensional support. By Lemma \ref{5.4}, the essential image of $\Phi_{K}^{\mathcal{X\to Y}}$ is contained in that of $F$. So the functor $G=F^{-1}\circ\Phi_{K}^{\mathcal{X\to Y}}$ is well-defined. In particular, we have $G(\mathcal{F})\cong\mathcal{F}$ for any coherent sheaf $\mathcal{F}$ on $\mathcal{X}$ with zero-dimensional support. By Lemma \ref{6.2}, we conclude that $G$ is a Fourier--Mukai autoequivalence on $D_{coh}^{b}(\mathcal{X})$. By construction, we have $F\cong\Phi_{K}^{\mathcal{X\to Y}}\circ G^{-1}.$ Since $G$ is a Fourier--Mukai transform, so is $G^{-1}$. This finishes the existence part of the proof. 

For uniqueness, suppose that $F\cong\Phi_{K}^{\mathcal{X\to Y}}$ for some bounded complex $K\in D_{coh}^{b}(\mathcal{X}\times_{k}\mathcal{Y})$. Consider the Fourier--Mukai transform $\Phi_{K\boxtimes\mathcal{O}_{\Delta_{\mathcal{X}}}}\colon D_{coh}^{b}(X\times_{k} X)\to D_{coh}^{b}(X\times_{k} Y)$. Apply this functor to the Postnikov system in \ref{eq4.1} gives us$$\Phi_{K\boxtimes\mathcal{O}_{\Delta}}(M_{n})\simeq\Phi_{K\boxtimes\mathcal{O}_{\Delta_{\mathcal{X}}}}(\mathcal{O}_{\Delta_{\mathcal{X}}})\oplus\Phi_{K\boxtimes\mathcal{O}_{\Delta_{\mathcal{X}}}}(\mathcal{F}_{n})[n]$$for all $n>0$. It is not hard to see that$$K\simeq\Phi_{K\boxtimes\mathcal{O}_{\Delta_{\mathcal{X}}}}(\mathcal{O}_{\Delta_{\mathcal{X}}}).$$So we have$$\Phi_{K\boxtimes\mathcal{O}_{\Delta}}(M_{n})\simeq\Phi_{K\boxtimes\mathcal{O}_{\Delta_{\mathcal{X}}}}(\mathcal{O}_{\Delta_{\mathcal{X}}})\oplus\Phi_{K\boxtimes\mathcal{O}_{\Delta_{\mathcal{X}}}}(\mathcal{F}_{n})[n]$$for all $n>0$. Note that any Fourier--Mukai transform is bounded. For a large enough $n$, we can find a small integer $j$ such that $K\simeq\tau_{\geq j}\Phi_{K\boxtimes\mathcal{O}_{\Delta}}(M_{n})$. But a simple computation shows that$$\Phi_{K\boxtimes\mathcal{O}_{\Delta_{\mathcal{X}}}}(\mathcal{E}_{i}\boxtimes\mathcal{F}_{i})\simeq\mathcal{E}_{i}\boxtimes F(\mathcal{F}_{i})$$for all $i$. This tells us that $\Phi_{K\boxtimes\mathcal{O}_{\Delta}}(M_{n})$ is the $n^{th}$ convolution of the complex in \ref{eq4.3}. By uniqueness of convolution, we have$$K\simeq\tau_{\geq j}\Phi_{K\boxtimes\mathcal{O}_{\Delta}}(M_{n})\simeq\tau_{\geq j}K_{n}$$for some fixed numbers $n$ and $j$. This proves that $K$ is unique.
\end{proof}

\section{Fully faithful functors and dimension}\label{7}

In this section, we provide an application of Theorem \ref{1.1}, which relates fully faithful functors and dimensions of the bounded derived categories of coherent sheaves on tame stacks. We first recall the notion of countable Rouquier dimension due to Pirozhkov.

\begin{definition}[{\cite[Definition\ 6]{Ola23}}]\label{7.1}
Let $\mathcal{T}$ be a triangulated category. The {\it countable Rouquier dimension} of $\mathcal{T}$, denoted by $\operatorname{CRdim}(\mathcal{T})$, is the smallest integer $n$ such that there exists a countable set $S$ of objects of $\mathcal{T}$ with $\mathcal{T}=\langle S\rangle_{n+1}$. 
\end{definition}

If require $S$ to be a singleton, then we recover the Rouquier dimension. See \cite[Definition\ 3.2]{Rouquier}. If $\mathcal{T}=D_{coh}^{b}(\mathcal{X})$ where $\mathcal{X}$ is an algebraic stack, we write $\operatorname{CRdim}(\mathcal{X})$ for $\operatorname{CRdim}(D_{coh}^{b}(\mathcal{X}))$. As remarked in \cite{Ola23}, the countable Rouquier dimension does not behave well for schemes over a finite field. Let $X$ be an algebraic variety over a finite field. Then $\operatorname{CRdim}(D_{coh}^{b}(X))=0$. Indeed, the derived category $D_{coh}^{b}(X)$ only has a countable number of objects up to isomorphism. The following lemma is clear from our definition.

\begin{lemma}[{\cite[Lemma\ 7]{Ola23}}]\label{7.2}
Let $F\colon\mathcal{T}\to\mathcal{S}$ be an exact functor between triangulated categories. If $F$ is essentially surjective, then $\operatorname{CRdim}(\mathcal{T})\geq\operatorname{CRdim}(\mathcal{S})$. 
\end{lemma}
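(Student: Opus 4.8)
The plan is to show that $F$ carries a countable generating set of $\mathcal{T}$ of level $n+1$ to a countable generating set of $\mathcal{S}$ of the same level. We may assume $\operatorname{CRdim}(\mathcal{T}) = n < \infty$, since otherwise there is nothing to prove. By definition there is a countable set $S$ of objects of $\mathcal{T}$ with $\mathcal{T} = \langle S\rangle_{n+1}$, and I would take the candidate generating set for $\mathcal{S}$ to be $F(S) := \{F(s) : s \in S\}$, which is again countable.

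First I would recall the recursive construction of the subcategories $\langle S\rangle_m$: one forms $\langle S\rangle_1$ as the closure of $S$ under shifts, finite direct sums, direct summands, and isomorphisms, and then $\langle S\rangle_m$ as the closure of $\langle S\rangle_{m-1} * \langle S\rangle_1$ under direct summands and isomorphisms, where $*$ denotes forming the middle term of a distinguished triangle whose outer terms lie in the two given subcategories. The key observation is that an exact functor respects every one of these operations: it commutes with the shift, sends finite direct sums to finite direct sums, sends distinguished triangles to distinguished triangles, and — because $F(X) \oplus F(Y) \cong F(X \oplus Y)$ — sends a direct summand of an object to a direct summand of its image. By induction on $m$ this yields $F(\langle S\rangle_m) \subseteq \langle F(S)\rangle_m$ for every $m$; in particular $F(\langle S\rangle_{n+1}) \subseteq \langle F(S)\rangle_{n+1}$.

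Finally, given any object $W$ of $\mathcal{S}$, essential surjectivity of $F$ produces an object $T$ of $\mathcal{T}$ with $F(T) \cong W$. Since $T \in \mathcal{T} = \langle S\rangle_{n+1}$, we get $W \cong F(T) \in \langle F(S)\rangle_{n+1}$, and the latter subcategory is closed under isomorphism, so $W \in \langle F(S)\rangle_{n+1}$. Hence $\mathcal{S} = \langle F(S)\rangle_{n+1}$ with $F(S)$ countable, which gives $\operatorname{CRdim}(\mathcal{S}) \le n = \operatorname{CRdim}(\mathcal{T})$. I do not expect any genuine obstacle here; the only step worth stating carefully is that $F$ preserves direct summands (equivalently, the idempotent-completion step in the definition of $\langle S\rangle_m$), which is immediate from additivity of $F$, since a split idempotent $e$ on an object is sent to a split idempotent $F(e)$ whose image is $F$ of the corresponding summand.
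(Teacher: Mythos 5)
Your proof is correct and is the standard argument: the paper itself gives no proof (it cites Olander's Lemma 7 and remarks that the statement is clear from the definition), and the inductive step $F(\langle S\rangle_m)\subseteq\langle F(S)\rangle_m$ together with essential surjectivity is exactly what that reference uses.
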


Let $\mathcal{X}$ be a Noetherian algebraic stack with finite diagonal. In this section, we show that the countable Rouquier dimension and the Krull dimension of $\mathcal{X}$ agree if it is smooth and tame. We first establish the following two lemmas on the vanishing of Ext-groups and compositions of maps in $D_{coh}^{b}(\mathcal{X})$.

\begin{lemma}[cf.\ {\cite[\href{https://stacks.math.columbia.edu/tag/0FZ3}{Tag 0FZ3}]{stacks-project}}]\label{7.3}
Let $\mathcal{X}$ be a smooth tame algebraic stack of Krull dimension $d$ with quasi-finite diagonal. For any coherent sheaves $\mathcal{F}$ and $\mathcal{G}$ on $\mathcal{X}$, we have $\operatorname{Ext}_{\mathcal{O}_{\mathcal{X}}}^{i}(\mathcal{F},\mathcal{G})=0$ for all $i>d$.
\end{lemma}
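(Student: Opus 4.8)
The plan is to reduce the vanishing of $\operatorname{Ext}^{i}_{\mathcal{O}_{\mathcal{X}}}(\mathcal{F},\mathcal{G})$ to a statement about a smooth scheme, where the corresponding fact is classical. First I would choose, as in Section~\ref{2}, a quasi-finite flat separated presentation $\pi\colon U\to\mathcal{X}$ with $U$ an affine scheme, using \cite[Theorem\ 7.2]{quasifinite}. Since $\mathcal{X}$ is smooth over $k$ and $\pi$ is flat, $U$ is a smooth affine $k$-scheme; moreover $\dim U \le \dim \mathcal{X} = d$ because $\pi$ is quasi-finite, hence has relative dimension $0$. The key point is that $\pi$ is faithfully flat, so to check that a cohomology sheaf of $R\sheafhom_{\mathcal{O}_{\mathcal{X}}}(\mathcal{F},\mathcal{G})$ vanishes it suffices to check after pulling back along $\pi$. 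Since $\mathcal{X}$ is smooth (so $\mathcal{F}$ is perfect), the pullback $L\pi^{*}R\sheafhom_{\mathcal{O}_{\mathcal{X}}}(\mathcal{F},\mathcal{G})$ is identified with $R\sheafhom_{\mathcal{O}_{U}}(\pi^{*}\mathcal{F},\pi^{*}\mathcal{G})$ by flat base change for $\operatorname{RHom}$ of a perfect complex. On the smooth affine scheme $U$ of dimension $\le d$, the global (and local) $\operatorname{Ext}$ sheaves $\sheafext^{i}_{\mathcal{O}_{U}}(\pi^{*}\mathcal{F},\pi^{*}\mathcal{G})$ vanish for $i>d$ by the classical statement \cite[\href{https://stacks.math.columbia.edu/tag/0FZ3}{Tag 0FZ3}]{stacks-project} (regular of dimension $\le d$ implies finite global dimension $\le d$). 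Hence $H^{i}$ of $R\sheafhom_{\mathcal{O}_{\mathcal{X}}}(\mathcal{F},\mathcal{G})$ vanishes for $i>d$ after pulling back to $U$, and therefore vanishes on $\mathcal{X}$.

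It then remains to pass from vanishing of the $\sheafext$-sheaves to vanishing of the global $\operatorname{Ext}$-groups. Here I would use the hypercohomology spectral sequence
\[
E_{2}^{p,q}=H^{p}\bigl(\mathcal{X},\sheafext^{q}_{\mathcal{O}_{\mathcal{X}}}(\mathcal{F},\mathcal{G})\bigr)\Longrightarrow \operatorname{Ext}^{p+q}_{\mathcal{O}_{\mathcal{X}}}(\mathcal{F},\mathcal{G}).
\]
By the previous paragraph $E_{2}^{p,q}=0$ for $q>d$, and since $\mathcal{X}$ is tame of finite type over $k$ with quasi-finite (hence affine, as $\mathcal{X}$ has quasi-finite diagonal and is tame) diagonal, $\mathcal{X}$ has finite cohomological dimension; in fact by \cite[Theorem\ B and C]{HR15} the cohomological dimension of $\mathcal{X}$ is $0$ when working over a field with tame, quasi-finite diagonal, so $E_{2}^{p,q}=0$ for $p>0$ as well. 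Consequently $\operatorname{Ext}^{i}_{\mathcal{O}_{\mathcal{X}}}(\mathcal{F},\mathcal{G})=E_{2}^{0,i}=H^{0}(\mathcal{X},\sheafext^{i}_{\mathcal{O}_{\mathcal{X}}}(\mathcal{F},\mathcal{G}))=0$ for $i>d$, which is the desired conclusion. (If one prefers not to invoke vanishing of higher cohomology on $\mathcal{X}$, one can instead argue on $U$ directly: $\operatorname{Ext}^{i}_{\mathcal{O}_{U}}(\pi^{*}\mathcal{F},\pi^{*}\mathcal{G})$ agrees with $H^{0}$ of the $\sheafext$ because $U$ is affine, and one bounds the $\mathcal{X}$-Ext by the $U$-Ext after noting $\mathcal{F}$ is a summand of $\pi_{*}\pi^{*}\mathcal{F}$ when $\mathcal{X}$ is tame, using the splitting of the unit.)

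The main obstacle I anticipate is the flat base change identification $L\pi^{*}R\sheafhom_{\mathcal{O}_{\mathcal{X}}}(\mathcal{F},\mathcal{G})\simeq R\sheafhom_{\mathcal{O}_{U}}(L\pi^{*}\mathcal{F},L\pi^{*}\mathcal{G})$ and, relatedly, controlling the global cohomology of $\mathcal{X}$: on a general stack $R\sheafhom$ does not commute with arbitrary pullback, so one genuinely needs $\mathcal{F}$ to be perfect, which is where smoothness of $\mathcal{X}$ enters. Once perfectness is in hand, the base-change compatibility is standard (e.g. \cite[Section\ 4]{HR17}), and the rest is bookkeeping with the spectral sequence and the dimension estimate $\dim U\le d$.
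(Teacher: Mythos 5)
Your reduction to the smooth affine presentation $\pi\colon U\to\mathcal{X}$ and the resulting vanishing $\sheafext^{q}_{\mathcal{O}_{\mathcal{X}}}(\mathcal{F},\mathcal{G})=0$ for $q>d$ are correct and match the paper's approach (the paper justifies the base-change identification for $R\sheafhom$ via \cite[Tag\ 0GM7]{stacks-project} rather than perfectness, but both are fine). The problem is how you pass from the sheaf-Ext vanishing to the global-Ext vanishing. You assert that $E_{2}^{p,q}=H^{p}(\mathcal{X},\sheafext^{q})=0$ for $p>0$ because $\mathcal{X}$ has cohomological dimension $0$, citing \cite{HR15}. This is false: $\mathbb{P}^{1}_{k}$ is a tame finite-type stack with quasi-finite (indeed trivial) diagonal, yet $H^{1}(\mathbb{P}^{1},\mathcal{O}(-2))\neq 0$. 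Theorems B and C of \cite{HR15} give \emph{finite} cohomological dimension, not cohomological dimension zero; tameness only kills the stacky contribution (cohomology along $\mathcal{X}\to X$), not the geometric cohomology of the coarse space. With merely finite cohomological dimension $c$, your spectral sequence yields $\operatorname{Ext}^{i}=0$ only for $i>d+c$, which is weaker than the lemma.

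The input the paper actually uses, following the proof of \cite[Tag\ 0FZ3]{stacks-project}, is the refined vanishing $H^{p}(\mathcal{X},\sheafext^{q}(\mathcal{F},\mathcal{G}))=0$ for $p>d-q$ (not for $p>0$). This is where the regularity hypothesis does its full work: by Auslander--Buchsbaum on the regular local rings of $U$, the sheaf $\sheafext^{q}_{U}(\pi^{*}\mathcal{F},\pi^{*}\mathcal{G})$ is supported in codimension $\geq q$, i.e.\ in dimension $\leq d-q$; this bound descends to $\sheafext^{q}_{\mathcal{X}}$ because $\pi$ is faithfully flat and quasi-finite. Then Grothendieck vanishing, applied after pushing forward to the coarse moduli space $X$ (here tameness is used so that $H^{p}(\mathcal{X},-)=H^{p}(X,\pi_{*}(-))$ and supports are preserved), gives $E_{2}^{p,q}=0$ for $p+q>d$, hence $\operatorname{Ext}^{i}=0$ for $i>d$. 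Your parenthetical fallback also has a gap: the unit $\mathcal{F}\to\pi_{*}\pi^{*}\mathcal{F}$ need not split for the quasi-finite flat presentation $\pi$ of Section~\ref{2}, since $\pi$ is not finite in general and its ramification is unrelated to the stabilizer orders that tameness controls.
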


\begin{proof}
By \cite[Theorem\ 7.1]{quasifinite}, we may choose a quasi-finite flat presentation $p\colon U\to\mathcal{X}$. Since $\mathcal{X}$ is smooth, it can be arranged that $U$ is also smooth. Since $p$ is quasi-finite, the Krull dimension of $\mathcal{X}$ and $U$ are the same. Since $p$ is flat, the natural map$$Lp^{*}\operatorname{RHom}_{\mathcal{O}_{\mathcal{X}}}(\mathcal{F},\mathcal{G})\to\operatorname{RHom}_{\mathcal{O}_{U}}(\pi^{*}\mathcal{F},\pi^{*}\mathcal{F})$$is an isomorphism by \cite[\href{https://stacks.math.columbia.edu/tag/0GM7}{Tag 0GM7}]{stacks-project}. It follows that$$p^{*}\sheafext^{i}(\mathcal{F},\mathcal{G})\cong\sheafext^{i}(p^{*}\mathcal{F},p^{*}\mathcal{G})$$for all $i$. In this case, we have the following spectral sequence$$H^{a}(U, p^{*}\sheafext_{\mathcal{O}_{\mathcal{X}}}^{b}(\mathcal{F},\mathcal{G}))\cong H^{a}(U, \sheafext_{\mathcal{O}_{U}}^{b}(p^{*}\mathcal{F},p^{*}\mathcal{G}))\implies\operatorname{Ext}_{\mathcal{O}_{U}}^{a+b}(p^{*}\mathcal{F},p^{*}\mathcal{G}).$$By \cite[\href{https://stacks.math.columbia.edu/tag/0FZ3}{Tag 0FZ3}]{stacks-project}, we know that $\operatorname{Ext}_{\mathcal{O}_{U}}^{p+q}(p^{*}\mathcal{F},p^{*}\mathcal{G})=0$ if $a>d-b$. It follows that $H^{a}(U, p^{*}\sheafext_{\mathcal{O}_{\mathcal{X}}}^{b}(\mathcal{F},\mathcal{G}))=0$ whenever $a>d-b$. Since $p$ is faithfully flat, we also have $H^{a}(\mathcal{X}, \sheafext_{\mathcal{O}_{\mathcal{X}}}^{b}(\mathcal{F},\mathcal{G}))=0$ whenever $a>d-b$. This finishes the proof.
\end{proof}

\begin{lemma}\label{7.4}
Let $\mathcal{X}$ be a smooth tame algebraic stack of Krull dimension $d$ with quasi-finite diagonal. Let $K_{0}\to K_{1}\to\cdots\to K_{d}\to K_{d+1}$ be some morphisms in $D_{coh}^{b}(\mathcal{X})$ whose induced maps on cohomology sheaves vanish. Then the composition $K_{0}\to K_{d+1}$ is the zero morphism in $D_{coh}^{b}(\mathcal{X})$. 
\end{lemma}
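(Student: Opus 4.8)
The plan is to isolate a single ``truncation lemma'' and then feed it into Lemma~\ref{7.3}. The truncation lemma states: if $g\colon A\to B$ is a morphism in $D^{b}_{coh}(\mathcal{X})$ that is zero on all cohomology sheaves and $A$ has no cohomology in degrees $>m$, then $g$ factors through the canonical truncation as $A\xrightarrow{g'}\tau_{\leq m-1}B\hookrightarrow B$, and $g'$ is again zero on cohomology. First I would prove this by pure $t$-structure bookkeeping: composing $g$ with $B\to\tau_{\geq m}B$ gives a morphism $A\to\tau_{\geq m}B$ that is still zero on cohomology, and using the canonical truncation triangles of $A$ and of $\tau_{\geq m}B$ together with the vanishing $\operatorname{Hom}(D^{\leq m-1},D^{\geq m})=0$ one identifies this morphism with a map $H^{m}(A)\to H^{m}(B)$ of coherent sheaves viewed in cohomological degree $m$, which is its own $H^{m}$ and hence vanishes; the final clause about $g'$ holds because $\tau_{\leq m-1}B\to B$ is an isomorphism on $H^{i}$ for $i\leq m-1$.

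Next I would iterate this down the chain. Let $b_{0}$ denote the top cohomological degree in which $K_{0}$ is nonzero. Applying the truncation lemma to $f_{0}$ factors it through $\tau_{\leq b_{0}-1}K_{1}$; composing the resulting morphism with $f_{1}$ is again zero on cohomology and has source concentrated in degrees $\leq b_{0}-1$, so the lemma factors it through $\tau_{\leq b_{0}-2}K_{2}$; repeating for all $d+1$ morphisms produces a chain
\[
K_{0}\longrightarrow\tau_{\leq b_{0}-1}K_{1}\longrightarrow\cdots\longrightarrow\tau_{\leq b_{0}-d-1}K_{d+1}
\]
of morphisms, each zero on cohomology sheaves, whose composite $\gamma$ satisfies $f_{d}\circ\cdots\circ f_{0}=\iota\circ\gamma$ for the canonical inclusion $\iota\colon\tau_{\leq b_{0}-d-1}K_{d+1}\hookrightarrow K_{d+1}$. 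It now suffices to prove $\gamma=0$.

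To do this I would induct on the cohomological amplitude of $K_{0}$. Lemma~\ref{7.3}
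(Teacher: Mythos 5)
Your proposal cuts off mid-sentence at ``To do this I would induct on the cohomological amplitude of $K_0$. Lemma~\ref{7.3}'' --- the crucial step is missing. Up to that point the outline is correct: the truncation lemma and the factorization $f_d\circ\cdots\circ f_0 = \iota\circ\gamma$ through $T:=\tau_{\leq b_0-d-1}K_{d+1}$ are fine, and your route is genuinely different from the paper's. The paper does not truncate at all; it quotes the ghost-index filtration $F^\bullet\operatorname{Hom}(K_0,K_{d+1})$ of \cite[Proposition~1]{Ola23}, observes via Lemma~\ref{7.3} that $F^{d+1}=F^{d+j}$ for all $j\geq 1$, notes $F^a=0$ for $a\gg 0$ by boundedness, and concludes since each $f_i\in F^1$. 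Your version would be a self-contained alternative to that citation.

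But be careful about the remaining step, because it is not routine. One cannot simply say $\operatorname{Hom}(K_0,T)=0$: with $K_0\in D^{[a_0,b_0]}$ and $T\in D^{[a_{d+1},\,b_0-d-1]}$, the groups $\operatorname{Ext}^{j-i}(H^i K_0,H^j T)$ with $0\leq j-i\leq d$ can all be nonzero as soon as $b_0-a_0>d$, so Lemma~\ref{7.3} does not kill the ambient Hom group. The induction on amplitude must therefore be on the \emph{strengthened} statement ``the factored composite $K_0\to\tau_{\leq b_0-d-1}K_{d+1}$ vanishes,'' not merely on the composite into $K_{d+1}$. In the base case (amplitude $0$) one does get $\operatorname{Hom}(\mathcal{F}[-b_0],T)=0$ by Lemma~\ref{7.3}. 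For the inductive step one needs two things: (i) the restriction map $u^*\colon\operatorname{Hom}(K_0,T)\to\operatorname{Hom}(\tau_{\leq b_0-1}K_0,T)$ is injective --- this follows because $\operatorname{Hom}(H^{b_0}(K_0)[-b_0],T)=0$ by Lemma~\ref{7.3}; and (ii) $\gamma\circ u$ equals $\iota'\circ\gamma'$, where $\gamma'\colon\tau_{\leq b_0-1}K_0\to\tau_{\leq b_0-d-2}K_{d+1}$ is the factored composite of the new ghost chain starting at $\tau_{\leq b_0-1}K_0$, and $\iota'$ is the canonical inclusion. Step (ii) is a compatibility statement between the truncation factorizations of the original and restricted chains; it follows from the uniqueness clause in your truncation lemma applied one stage at a time, but it has to be stated and checked. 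With (i) and (ii) the inductive hypothesis gives $\gamma'=0$, hence $\gamma\circ u=0$, hence $\gamma=0$. If your intended completion skips (ii), or attempts the induction on the weaker statement ``the composite $K_0\to K_{d+1}$ vanishes,'' it will not close: the map $\iota_*\colon\operatorname{Hom}(\tau_{\leq b_0-1}K_0,T)\to\operatorname{Hom}(\tau_{\leq b_0-1}K_0,K_{d+1})$ is not injective for $d\geq 2$, so the inductive conclusion does not transfer back.
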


\begin{proof}
We argue as in the proof of \cite[Theorem\ 2]{Ola23}. Since $\mathcal{X}$ is smooth and tame, $D_{coh}^{b}(\mathcal{X})$ is a full subcategory of $D(QCoh(\mathcal{X}))$ by \cite[Theorem\ 2.1(2)]{Hal22}. Note that $\operatorname{QCoh}(\mathcal{X})$ has enough injectives. By \cite[Proposition\ 1]{Ola23}, we obtain a filtration$$\operatorname{Hom}(K_{0},K_{d+1})=F^{0}\supset F^{1}\supset F^{2}\supset\cdots.$$By Lemma \ref{7.3} and \cite[Proposition\ 1(2)]{Ola23}, we have $F^{d+1}=F^{d+j}$ for all $j\geq 1$. But we know that $F^{a}=0$ for a large enough $a$. It follows that $F^{d+1}=0$. By \cite[Proposition\ 1(4)]{Ola23}, the map $K_{i}\to K_{i+1}$ is in $F^{1}\operatorname{Hom}(K_{i},K_{i+1})$. Therefore the composition $K_{0}\to K_{d+1}$ is in $F^{d+1}\operatorname{Hom}(K_{0},K_{1})$, which is 0. 
\end{proof}

\begin{theorem}[cf.\ {\cite[Theorem\ 2.17]{BF12}}\ and\ {\cite[Proposition\ 9]{Ola23}}]\label{7.5}
Let $\mathcal{X}$ be a smooth tame algebraic stack of Krull dimension $d$ of finite type with finite diagonal over an uncountable field $k$. Then $\operatorname{CRdim}(\mathcal{X})=\dim(\mathcal{X})$.
\end{theorem}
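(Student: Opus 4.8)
The plan is to prove the two inequalities $\operatorname{CRdim}(\mathcal{X}) \le \dim\mathcal{X}$ and $\operatorname{CRdim}(\mathcal{X}) \ge \dim\mathcal{X}$ separately; write $d = \dim\mathcal{X}$. First I would reduce to the case where $\mathcal{X}$ is connected, since the countable Rouquier dimension of a finite disjoint union is the maximum of those of the pieces, and dispose of $d = 0$ at once: a smooth $0$-dimensional tame stack of finite type with finite diagonal is a finite disjoint union of classifying stacks $B_{k'}G$ with $G$ linearly reductive and finite, so $\operatorname{Coh}(\mathcal{X})$ is semisimple with only finitely many simple objects (the generalized skyscrapers), whence $D^b_{coh}(\mathcal{X}) = \langle \bigoplus(\text{simples})\rangle_1$ and $\operatorname{CRdim}(\mathcal{X}) = 0 = d$. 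So I may assume $d \ge 1$.

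For the upper bound I would feed the resolution of the diagonal $\cdots \to \mathcal{E}_i\boxtimes\mathcal{G}_i \to \cdots \to \mathcal{E}_0\boxtimes\mathcal{G}_0 \to \mathcal{O}_{\Delta_\mathcal{X}} \to 0$ supplied by Corollary \ref{2.4} into the generation-time machinery of \cite[Theorem~2.17]{BF12}, using that $\mathcal{X}$ has finite cohomological dimension (a consequence of Lemma \ref{7.3}, cf.\ also \cite{HR15}) to truncate and extract a \emph{countable} set of objects generating $D^b_{coh}(\mathcal{X})$ in $d+1$ steps, exactly as in Olander's treatment of schemes \cite[Proposition~9]{Ola23}. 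An alternative route is to reduce to a smooth scheme: linear reductivity of the stabilizers (tameness) makes $\mathcal{O}_{\mathcal{X}}$ a direct summand of $R\rho_*\mathcal{O}_V$ for a suitable finite flat presentation $\rho\colon V \to \mathcal{X}$ with $V$ a smooth scheme of dimension $d$, so by the projection formula $\mathrm{id}_{D^b_{coh}(\mathcal{X})}$ is a retract of the exact endofunctor $R\rho_*L\rho^*$, giving $\operatorname{CRdim}(\mathcal{X}) \le \operatorname{CRdim}(V) \le \dim V = d$ via \cite[Proposition~9]{Ola23}.

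The lower bound is the substantive part, and this is where Lemmas \ref{7.3} and \ref{7.4}, the spanning class of generalized skyscrapers (Proposition \ref{5.1}), and the hypothesis that $k$ is uncountable all enter, following Olander. Suppose for contradiction that $D^b_{coh}(\mathcal{X}) = \langle S\rangle_d$ for a countable set $S = \{G_1, G_2, \dots\}$, so that $\langle S\rangle_d = \bigcup_m\langle G_1\oplus\cdots\oplus G_m\rangle_d$. Since $\mathcal{X}$ is smooth of dimension $d$, for each closed point $x$ the residual gerbe $\mathcal{G}_x \hookrightarrow \mathcal{X}$ is a regular closed immersion of codimension $d$, so its Koszul resolution shows that for a suitable pair of simple objects $\xi,\nu$ of $\operatorname{QCoh}(\mathcal{G}_x)$ (e.g.\ $\nu = \xi\otimes(\det N_{\mathcal{G}_x/\mathcal{X}})^{-1}$) one has $\operatorname{Ext}^d_{\mathcal{O}_\mathcal{X}}(\mathcal{O}_{x,\xi},\mathcal{O}_{x,\nu}) \ne 0$, and a nonzero class here factors as a $d$-fold Yoneda composite of nonzero $\operatorname{Ext}^1$-classes between generalized skyscrapers at $x$ (their $\operatorname{Ext}$-algebra is Koszul, hence generated in degrees $0$ and $1$), each of those degree-one maps being zero on cohomology sheaves. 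Now there are uncountably many closed points but only countably many $m$, so a pigeonhole argument produces a single $m$ and an uncountable family of closed points for which all the relevant generalized skyscrapers lie in $\langle G_1\oplus\cdots\oplus G_m\rangle_d$; refining once more — using uncountability to avoid the countably many proper loci on which the $\operatorname{Hom}$-spaces out of the now-fixed generator jump — one arranges, at some such point, that the $d$ Yoneda factors are ghosts for that generator. The ghost lemma for objects generated in $d$ steps then forces their composite, a nonzero $\operatorname{Ext}^d$-class, to vanish — a contradiction; alternatively one splices the chosen generation into the Yoneda factorization to produce a nonzero composite of $d+1$ cohomology-killing maps, contradicting Lemma \ref{7.4}.

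The hard part will be this last assembly in the lower bound: turning the set-theoretic pigeonhole over the uncountably many closed points into a genuine contradiction, i.e.\ choosing a closed point at which the Yoneda factors of a top $\operatorname{Ext}$-class become ghosts for the fixed finite generator and then pinning down precisely which instance of the ghost lemma (or of Lemma \ref{7.4}) applies. The remaining ingredients — $\operatorname{Ext}^d$-nonvanishing from smoothness, $\operatorname{Ext}$-vanishing above degree $d$ from Lemma \ref{7.3}, and the resolution-of-the-diagonal (or finite-cover) reduction for the upper bound — are comparatively routine.
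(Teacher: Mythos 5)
Your two inequalities match the paper's, and your upper bound route (feeding the countable family of coherent sheaves from Section~\ref{2} into a ghost--lemma/cone argument powered by Lemma~\ref{7.4}) is essentially what the paper does, modulo that the paper uses the Deligne system $\{\mathcal{F}_n\}$ from Lemma~\ref{2.2} directly rather than the diagonal resolution. Your second suggested route for the upper bound---a finite flat presentation $\rho\colon V\to\mathcal{X}$ with $V$ a \emph{smooth scheme}---is not available in this generality: tameness and finite diagonal give finite scheme covers (by \cite[Theorem~2.7]{EHKV01}) and \'etale-local smooth finite flat covers, but a global finite flat cover by a smooth scheme is obstructed by the resolution property and Brauer-type issues and cannot be assumed.

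For the lower bound you have missed the paper's key simplification. You propose to re-run Olander's uncountable-field pigeonhole/ghost argument directly on the stack, working with generalized skyscraper sheaves $\mathcal{O}_{x,\xi}$, Koszul resolutions of residual gerbes, and twists by $\det N_{\mathcal{G}_x/\mathcal{X}}$; you yourself flag that the assembly (choosing the point, making the Yoneda factors ghosts for a fixed generator, invoking the right ghost lemma) is incomplete. The paper avoids all of this. Because $\mathcal{X}$ is tame, the coarse moduli map $\pi\colon\mathcal{X}\to X$ has $\pi_*$ exact and the unit $\mathrm{id}\to\pi_*\pi^*$ an isomorphism, so $R\pi_*\colon D^b_{coh}(\mathcal{X})\to D^b_{coh}(X)$ is essentially surjective (send $M$ to the termwise pullback $\pi^*M$). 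Lemma~\ref{7.2} then gives $\operatorname{CRdim}(\mathcal{X})\ge\operatorname{CRdim}(X)$, $\pi$ is a universal homeomorphism so $\dim X=\dim\mathcal{X}$, and one simply quotes Olander's result $\operatorname{CRdim}(X)\ge\dim X$ for algebraic spaces over an uncountable field \cite[Proposition~2.1.4]{olander_2022}. This collapses the entire lower bound to a one-line reduction and is exactly where the tameness hypothesis earns its keep; your direct argument, while probably salvageable, re-proves Olander's hardest step on the stack level without need, and as sketched it leaves the pigeonhole step and the claimed Koszulity/nonvanishing of $\operatorname{Ext}^d$ between generalized skyscrapers unverified. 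I would strongly recommend the coarse-space reduction instead.
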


\begin{proof}
We argue as in the proof of \cite[Proposition\ 9]{Ola23}. By assumption, $\mathcal{X}$ admits a coarse moduli space $\pi\colon\mathcal{X}\to X$. We prove the statement by showing the following chain of inequalities$$\operatorname{CRdim}(\mathcal{X})\geq\operatorname{CRdim}(X)\geq\dim(X)=\dim(\mathcal{X})\geq\operatorname{CRdim}(\mathcal{X}).$$The equality in the middle holds because $\pi$ is a universal homomorphism. Since $\mathcal{X}$ is tame, $\pi_{*}$ is also a good moduli space. In particular, $\pi_{*}$ is exact and the unit of the adjunction $id\to\pi_{*}\pi^{*}$ is an isomorphism. This tells us that $R\pi_{*}\colon D_{coh}^{b}(\mathcal{X})\to D_{coh}^{b}(X)$ is essentially surjective. By Lemma \ref{7.2}, we have $\operatorname{CRdim}(\mathcal{X})\geq\operatorname{CRdim}(X)$. Therefore we may assume that $\mathcal{X}$ is an algebraic space. By \cite[Proposition\ 2.1.4]{olander_2022}, we have $\operatorname{CRdim}(X)\geq\dim(X)$. It follows that$$\operatorname{CRdim}(\mathcal{X})\geq\operatorname{CRdim}(X)\geq\dim(X)=\dim(\mathcal{X}).$$

It remains to show $\dim(\mathcal{X})\geq\operatorname{CRdim}(X)$. In section 2, we produced a countable set $\{\mathcal{F}_{i}\}_{i\geq 0}$ of coherent sheaves on $\mathcal{X}$. We claim that $D_{coh}^{b}(\mathcal{X})=\langle\{\mathcal{F}_{i}\}_{i\geq 0}\rangle_{d+1}$. This would imply that $\operatorname{CRdim}(\mathcal{X})\leq\dim(\mathcal{X})$. To see this, we argue as in the proof of \cite[Theorem\ 4]{Ola23}. By Lemma \ref{2.2}, any bounded complex $K_{0}$ in $D_{coh}^{b}(\mathcal{X})$ admits a map $\sum_{n\in\mathbb{Z}}\mathcal{F}_{n}^{\oplus m_{i}}[M_{i}]\to K_{0}$ that is surjective on the cohomology sheaves for some $m_{i}$ and $M_{i}$. Let $K_{1}$ be the cone of the map above. By construction, $K_{1}$ is contained in $\langle\{\mathcal{F}_{n}\}_{n\in\mathbb{Z}}\rangle_{1}$. Repeat the above process, we get a sequence$$K_{0}\to K_{1}\to\cdots\to K_{d}\to K_{d+1}.$$ By Lemma \ref{7.4}, the composition $K_{0}\to K_{d+1}$ is 0.

Now we show that the cone of $K_{0}\to K_{i}$ is in $\langle\{\mathcal{F}_{n}\}_{n\in\mathbb{Z}}\rangle_{i}$. We do so by induction on $1\leq j\leq d+1$. The base case is shown above. For any $j>1$, let $C_{j}$ be the mapping cone of $K\to K_{j}$ and let $D_{j}$ be the mapping cone of the map $K_{j}\to K_{j+1}$. By the octahedral axiom, there is a distinguished triangle$$C_{j}\to C_{j+1}\to D_{j}\to C_{j}[1]$$for any $1<j\leq d+1$. We know that $D_{j}\in\langle\{\mathcal{F}_{n}\}_{n\in\mathbb{Z}}\rangle_{1}$. If $C_{j}\in\langle\{\mathcal{F}_{n}\}_{n\in\mathbb{Z}}\rangle_{j}$ for a fixed $j$, then $C_{j+1}\in\langle\{\mathcal{F}_{n}\}_{n\in\mathbb{Z}}\rangle_{j}\star\langle\{\mathcal{F}_{n}\}_{n\in\mathbb{Z}}\rangle_{1}=\langle\{\mathcal{F}_{n}\}_{n\in\mathbb{Z}}\rangle_{j+1}$. By induction on $j$, the claim is true. Since the map $K_{0}\to K_{d+1}$ is zero, we have $C_{d+1}\cong K_{0}[1]\oplus K_{d+1}\in\langle\{\mathcal{F}_{n}\}_{n\in\mathbb{Z}}\rangle_{d+1}$. It follows that $K=K_{0}\in\langle\{\mathcal{F}_{n}\}_{n\in\mathbb{Z}}\rangle_{d+1}$ as desired. 
\end{proof}

We say an algebraic stack $\mathcal{X}$ has the 1-resolution property if there exists a vector bundle $\mathcal{V}$ on $\mathcal{X}$ such that any coherent sheaf on $\mathcal{F}$ admits a subjective map from $\mathcal{V}^{\otimes n}$ for some $n>0$. For example, any quasi-affine scheme has the 1-resolution property since the structure sheaf is ample. Normal algebraic stacks with the 1-resolution property have been classified in \cite{DHM22}. Using the same argument for Theorem \ref{7.5}, one could show that the Rouquier dimension agrees with the Krull dimension for any regular tame algebraic stack with the 1-resolution property, extending a previous result of Olander for quasi-affine schemes \cite[Corollary\ 5]{Ola23}.

Now we are ready to prove the main theorem of this section.

\begin{proof}[Proof of Theorem \ref{1.3}]
By Theorem \ref{1.1}, $F\cong\Phi_{K}^{\mathcal{X}\to\mathcal{Y}}$ for some $K$ in $D_{coh}^{b}(\mathcal{X}\times_{k}\mathcal{Y})$. Let $k\to l$ be any uncountable field extension. Base changing from $k$ to $l$ yields an $l$-linear exact functor $F_{l}=\Phi_{E_{l}}^{\mathcal{X}_{l}\to\mathcal{Y}_{l}}$ that is also fully faithful. Therefore we may assume $k$ is uncountable. Since $F$ is fully faithful, it has a right adjoint $R$ by \cite[\href{https://stacks.math.columbia.edu/tag/0FYL}{Tag 0FYL}]{stacks-project}. In particular, $R$ is essentially surjective. It follows from Lemma \ref{7.2} and Proposition \ref{7.5} that
\begin{equation*}
    \dim(\mathcal{X})=\operatorname{CRdim}(\mathcal{X})\leq\operatorname{CRdim}(\mathcal{Y})=\dim(\mathcal{Y}).\qedhere
\end{equation*}
\end{proof}

Let $X$ be a normal proper scheme over a perfect field with at worst finite linearly reductive quotient singularities. By \cite[Theorem\ 1.10]{Sat12}, there is a unique smooth tame algebraic stack $X^{can}$ such that $X^{can}\to X$ is a coarse moduli space. In particular, $X^{can}$ is proper. We conclude this article with the following special case of Theorem \ref{1.3}.

\begin{corollary}\label{7.6}
Let $X, Y$ be normal proper schemes over a perfect field with at worst finite linearly reductive quotient singularities. Let $X^{can},Y^{can}$ be the associated smooth tame algebraic stacks. If $D_{coh}^{b}(\mathcal{X}^{can})\cong D_{coh}^{b}(\mathcal{Y}^{can})$, then $\dim(X)=\dim(Y)$.
\end{corollary}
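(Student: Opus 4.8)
The plan is to obtain Corollary \ref{7.6} as an immediate consequence of Theorem \ref{1.3}. The key observation is that, by \cite[Theorem\ 1.10]{Sat12}, the canonical stacks $X^{can}$ and $Y^{can}$ are smooth tame algebraic stacks of finite type over $k$ whose coarse moduli spaces are exactly $X$ and $Y$. So the only hypothesis of Theorem \ref{1.3} that is not immediately visible is properness of $X^{can}$ and $Y^{can}$; this follows because the coarse moduli morphism $X^{can}\to X$ is proper (it is separated, quasi-finite, and universally closed), and $X$ is proper over $k$ by assumption, so the composite $X^{can}\to X\to\operatorname{Spec}k$ is proper. The same argument applies to $Y^{can}\to Y\to\operatorname{Spec}k$. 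Thus both $X^{can}$ and $Y^{can}$ are smooth, proper, and tame over $k$.

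With this in place, I would proceed as follows. The hypothesis furnishes a $k$-linear exact equivalence $F\colon D^{b}_{coh}(X^{can})\xrightarrow{\ \sim\ }D^{b}_{coh}(Y^{can})$; in particular $F$ is a $k$-linear exact fully faithful functor between smooth proper tame algebraic stacks over $k$. Applying the equivalence case of Theorem \ref{1.3} gives $\dim(X^{can})=\dim(Y^{can})$. Finally, since the coarse moduli morphisms $X^{can}\to X$ and $Y^{can}\to Y$ are universal homeomorphisms on the underlying topological spaces, we have $\dim(X^{can})=\dim(X)$ and $\dim(Y^{can})=\dim(Y)$, whence $\dim(X)=\dim(Y)$. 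Equivalently, one reads off $\dim(X)=\dim(Y)$ directly from Theorem \ref{1.3} once one records that $X$ and $Y$ are the coarse spaces of $X^{can}$ and $Y^{can}$.

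There is no substantial obstacle here; the statement is really a packaging of Theorem \ref{1.3}. The only points deserving care are: verifying that Satriano's construction genuinely supplies all three hypotheses (smooth, proper, tame) of Theorem \ref{1.3} --- which it does, the properness coming from properness of $X$ and $Y$ together with properness of the coarse space morphisms --- and the implicit convention that the equivalence $D^{b}_{coh}(X^{can})\cong D^{b}_{coh}(Y^{can})$ in the statement is taken to be $k$-linear. The latter is harmless: $k$-linearity is part of what one means by a derived equivalence in this setting, it is automatic when $k$ is a prime field, and the equivalences produced by our main theorem are Fourier--Mukai and hence $k$-linear by construction.
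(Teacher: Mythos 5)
Your proof is correct and follows essentially the same route as the paper: apply the equivalence case of Theorem~\ref{1.3} to the canonical stacks supplied by Satriano's theorem, and identify $\dim(X^{can})=\dim(X)$ and $\dim(Y^{can})=\dim(Y)$ via the coarse moduli morphisms. You additionally spell out the verification of the hypotheses of Theorem~\ref{1.3} (in particular properness of $X^{can}$ and $Y^{can}$, which follows from properness of the coarse space morphisms composed with properness of $X$ and $Y$), a point the paper records in the paragraph preceding the corollary rather than in the proof itself.
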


\begin{proof}
By Theorem \ref{1.3}, we have
\begin{equation*}
\dim(X)=\dim(\mathcal{X}^{can})=\dim(\mathcal{Y}^{can})=\dim(Y).\qedhere
\end{equation*}
\end{proof}

\appendix

\section{Functors between categories of quasi-coherent sheaves}\label{A}

In this appendix, we establish some technical results for quasi-coherent sheaves and Fourier--Mukai functors that we used to prove Theorem \ref{1.2}. The content is mostly an exposition of the results in \cite[\href{https://stacks.math.columbia.edu/tag/0FZA}{Tag 0FZA}]{stacks-project}. Let $R$ be a ring. Let $\mathcal{X},\mathcal{Y}$ be algebraic stacks over $R$ with $X$ quasi-compact and quasi-separated. Let $\mathcal{K}$ be an object in $\operatorname{QCoh}(\mathcal{X}\times_{R}\mathcal{Y})$. Then there exists an $R$-linear functor$$F_{\mathcal{K}}\colon\operatorname{QCoh}(\mathcal{X})\to\operatorname{QCoh}(\mathcal{Y})$$sending a quasi-coherent sheaf $\mathcal{F}$ on $\mathcal{X}$ to $p_{2,*}(p_{1}^{*}\mathcal{F}\otimes_{\mathcal{O}_{\mathcal{X}\times_{R}\mathcal{Y}}}\mathcal{K})$. Since $X$ is quasi-compact and quasi-separated, this functor is well-defined. Moreover, the functor $F_{\mathcal{K}}$ is also $R$-linear and preserves arbitrary direct sums.

\begin{lemma}[cf.\ {\cite[\href{https://stacks.math.columbia.edu/tag/0FZD}{Tag 0FZD}]{stacks-project}}]\label{A.1}
Let $R$ be a ring. Let $\mathcal{X},\mathcal{Y}$ be algebraic stacks over $R$. If $\mathcal{X}$ is an affine scheme, then there exists an equivalence of categories between
\begin{enumerate}[topsep=0pt,noitemsep,label=\normalfont(\arabic*)]
    \item the category of right exact $R$-linear functors $F\colon\operatorname{QCoh}(\mathcal{X})\to\operatorname{QCoh}(\mathcal{Y})$ that commute with arbitrary direct sums, and
    \item the category $\operatorname{QCoh}(\mathcal{X}\times_{R}\mathcal{Y})$.
\end{enumerate}
\end{lemma}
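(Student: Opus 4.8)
The plan is to reduce to module categories and recognize the claim as a relative Eilenberg--Watts theorem. Write $A=\Gamma(\mathcal{X},\mathcal{O}_{\mathcal{X}})$, so that $\mathcal{X}=\operatorname{Spec}A$ and $\operatorname{QCoh}(\mathcal{X})$ is canonically the category $\operatorname{Mod}_{A}$ of $A$-modules. Since $\mathcal{X}\to\operatorname{Spec}R$ is affine, the projection $p_{2}\colon\mathcal{X}\times_{R}\mathcal{Y}\to\mathcal{Y}$ is affine with $p_{2,*}\mathcal{O}_{\mathcal{X}\times_{R}\mathcal{Y}}\cong A\otimes_{R}\mathcal{O}_{\mathcal{Y}}$ as a quasi-coherent sheaf of $\mathcal{O}_{\mathcal{Y}}$-algebras, so pushforward along $p_{2}$ identifies $\operatorname{QCoh}(\mathcal{X}\times_{R}\mathcal{Y})$ with the category of quasi-coherent $\mathcal{O}_{\mathcal{Y}}$-modules $\mathcal{M}$ equipped with an $R$-algebra homomorphism $A\to\operatorname{End}_{\mathcal{O}_{\mathcal{Y}}}(\mathcal{M})$ --- call such data an \emph{$A$-module in $\operatorname{QCoh}(\mathcal{Y})$}. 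Under this identification, $p_{1}^{*}$ carries $\widetilde{M}$ to $M\otimes_{R}\mathcal{O}_{\mathcal{Y}}$ with its evident $A\otimes_{R}\mathcal{O}_{\mathcal{Y}}$-action, and one computes $F_{\mathcal{K}}(\widetilde{M})=p_{2,*}(p_{1}^{*}\widetilde{M}\otimes\mathcal{K})\cong\mathcal{K}\otimes_{A}M$, the tensor product being formed using the left $A$-action on $\mathcal{K}$.

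First I would check that $\mathcal{K}\mapsto F_{\mathcal{K}}$ genuinely lands in the category (1): it is well defined because $\mathcal{X}$ is quasi-compact and quasi-separated, it is $R$-linear and commutes with arbitrary direct sums by construction, and it is right exact since $p_{1}^{*}$ is right exact (being a left adjoint), $-\otimes\mathcal{K}$ is right exact, and $p_{2,*}$ is exact ($p_{2}$ affine); equivalently, the formula $F_{\mathcal{K}}(\widetilde{M})\cong\mathcal{K}\otimes_{A}M$ makes this manifest. For the quasi-inverse I would send a functor $F$ as in (1) to the object $F(\mathcal{O}_{\mathcal{X}})=F(\widetilde{A})$ of $\operatorname{QCoh}(\mathcal{Y})$, made into an $A$-module in $\operatorname{QCoh}(\mathcal{Y})$ via $a\mapsto F(m_{a})$, where $m_{a}\colon\widetilde{A}\to\widetilde{A}$ is multiplication by $a$; this is a ring homomorphism $A\to\operatorname{End}_{\mathcal{O}_{\mathcal{Y}}}(F(\widetilde{A}))$ because $F$ is $R$-linear and $m_{ab}=m_{a}\circ m_{b}$, $m_{1}=\operatorname{id}$. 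A natural transformation $F\to F'$ induces a morphism $F(\widetilde{A})\to F'(\widetilde{A})$ compatible with these actions, so this is functorial.

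It remains to produce the two natural isomorphisms. In one direction, $F_{\mathcal{K}}(\widetilde{A})\cong\mathcal{K}\otimes_{A}A\cong\mathcal{K}$, and tracking the $A$-action shows this is an isomorphism of $A$-modules in $\operatorname{QCoh}(\mathcal{Y})$, natural in $\mathcal{K}$. In the other direction, for $F$ as in (1) and an $A$-module $M$, define $\eta_{M}\colon F(\widetilde{A})\otimes_{A}M\to F(\widetilde{M})$ to be the map induced by $m\mapsto F(\mu_{m})$, where $\mu_{m}\colon\widetilde{A}\to\widetilde{M}$ corresponds to $a\mapsto am$; one checks that $\eta$ is a natural transformation and that $\eta_{A}=\operatorname{id}_{F(\widetilde{A})}$, hence $\eta_{M}$ is an isomorphism for $M$ finite free and then for $M$ arbitrary free, since both sides commute with direct sums. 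For general $M$, choose a presentation $A^{(J)}\to A^{(I)}\to M\to 0$; since both $F(\widetilde{A})\otimes_{A}(-)$ and $F\circ\widetilde{(-)}$ are right exact, the five lemma forces $\eta_{M}$ to be an isomorphism. Thus $F_{F(\widetilde{A})}\cong F$ naturally, and the two constructions are mutually quasi-inverse.

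The only genuinely new ingredients over the scheme case \cite[\href{https://stacks.math.columbia.edu/tag/0FZD}{Tag 0FZD}]{stacks-project} are the affineness of $p_{2}$ and the resulting module-theoretic description of $\operatorname{QCoh}(\mathcal{X}\times_{R}\mathcal{Y})$, both of which hold for an arbitrary algebraic stack $\mathcal{Y}$; everything else is the standard Eilenberg--Watts argument, which is insensitive to replacing $\operatorname{QCoh}(Y)$ by $\operatorname{QCoh}(\mathcal{Y})$ because $\mathcal{Y}$ enters only as an $R$-linear abelian category with arbitrary direct sums and the proof uses no more than right-exactness of the relevant functors together with the five lemma. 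The main bookkeeping burden --- and the step I expect to require the most care --- is verifying naturality of $\eta$ and the compatibility of the $A$-module structures under the round trip $\mathcal{K}\mapsto F_{\mathcal{K}}\mapsto F_{\mathcal{K}}(\widetilde{A})$, which is routine but fiddly.
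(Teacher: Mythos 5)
Your proposal is correct and takes essentially the same route as the paper: identify $\operatorname{QCoh}(\mathcal{X}\times_R\mathcal{Y})$ with $A$-modules in $\operatorname{QCoh}(\mathcal{Y})$ via affineness of $p_2$, send $F$ to $F(\mathcal{O}_{\mathcal{X}})$ with its induced $A$-action, and conclude by the Eilenberg--Watts argument. The only difference is that you write out the Eilenberg--Watts step (building $\eta_M$ and propagating from free modules to general ones via right exactness) in full, whereas the paper outsources it to \cite[\href{https://stacks.math.columbia.edu/tag/0GNQ}{Tag 0GNQ}]{stacks-project}.
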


\begin{proof}
Suppose $\mathcal{X}=\operatorname{Spec}A$ for some $R$-algebra $A$. The proof is essentially the same as that of \cite[\href{https://stacks.math.columbia.edu/tag/0FZD}{Tag 0FZD}]{stacks-project}. Let $\mathcal{K}$ be a quasi-coherent sheaf on $\mathcal{X}\times\mathcal{Y}$. Then the associated Fourier--Mukai transform $F_{\mathcal{K}}$ is an $R$-linear functor that commutes with arbitrary direct sums. Since $\mathcal{X}$ is affine, the projection $p_{2}\colon\mathcal{X}\times_{R}\mathcal{Y}\to\mathcal{Y}$ is affine. Therefore $p_{2}$ is exact, and thus $F_{\mathcal{K}}$ is right exact.

Let $F\colon\operatorname{QCoh}(\mathcal{X})\to\operatorname{QCoh}(\mathcal{Y})$ be an $R$-linear functor that commute with arbitrary direct sum. Set $\mathcal{G}=F(\mathcal{O}_{\mathcal{X}})$. Then the functor $F$ induces a ring homomorphism $A\to\operatorname{End}_{\mathcal{O}_{\mathcal{Y}}}(\mathcal{G})$ sending $a$ to $F(a\cdot id)$. This gives us an $A\otimes_{R}\mathcal{Y}$-module structure on $\mathcal{G}$. Note that $A\otimes_{R}\mathcal{Y}\cong p_{2,*}\mathcal{O}_{\mathcal{X}\times_{R}\mathcal{Y}}$. This tells us that $\mathcal{G}$ is a quasi-coherent $\mathcal{O}_{\mathcal{X}\times_{R}\mathcal{Y}}$-module. Since $p_{2}$ is affine, there exists a unique quasi-coherent sheaf $\mathcal{K}^\prime$ on $\mathcal{X}\times_{R}\mathcal{Y}$ such that $\mathcal{G}\cong p_{2,*}\mathcal{K}^\prime$. Consider the associated Fourier--Mukai transform $F_{\mathcal{K}^\prime}$. We see that $F$ and $F_{\mathcal{K}^\prime}$ both send $\mathcal{O}_{\mathcal{X}}$ to $\mathcal{O}_{\mathcal{Y}}$ and are compatible with the action of $A$ and $\mathcal{O}_{\mathcal{Y}}$. It follows from \cite[\href{https://stacks.math.columbia.edu/tag/0GNQ}{Tag 0GNQ}]{stacks-project} that $F\cong F_{\mathcal{K}^\prime}$. This finishes the proof.
\end{proof}

If we restrict to the category of exact $R$-linear functors $F\colon\operatorname{QCoh}(\mathcal{X})\to\operatorname{QCoh}(\mathcal{Y})$ that commute with arbitrary direct sums, then the essential image of this category under the equivalence in Lemma \ref{A.1} is precisely the category of quasi-coherent sheaves on $\mathcal{X}\times_{R}\mathcal{Y}$ that are flat over $\mathcal{X}$. 

\begin{corollary}\label{A.2}
In the situation of Lemma \ref{A.1}, there exists an equivalence of categories between
\begin{enumerate}[topsep=0pt,noitemsep,label=\normalfont(\arabic*)]
    \item the category of exact $R$-linear functors $F\colon\operatorname{QCoh}(\mathcal{X})\to\operatorname{QCoh}(\mathcal{Y})$, and
    \item the full subcategory of $\operatorname{QCoh}(\mathcal{X}\times_{R}\mathcal{Y})$ consisting of quasi-coherent sheaves flat over $\mathcal{X}$.
\end{enumerate}
\end{corollary}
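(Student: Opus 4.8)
The plan is to deduce everything from Lemma~\ref{A.1}. That lemma already exhibits $\mathcal{K}\mapsto F_{\mathcal{K}}$ as an equivalence between $\operatorname{QCoh}(\mathcal{X}\times_{R}\mathcal{Y})$ and the category of right exact $R$-linear functors $\operatorname{QCoh}(\mathcal{X})\to\operatorname{QCoh}(\mathcal{Y})$ that commute with arbitrary direct sums; restricting an equivalence to a full subcategory remains an equivalence onto its essential image, so it suffices to prove that, for a quasi-coherent sheaf $\mathcal{K}$ on $\mathcal{X}\times_{R}\mathcal{Y}$, the functor $F_{\mathcal{K}}$ is exact if and only if $\mathcal{K}$ is flat over $\mathcal{X}$. (Note that the functors in (1) are precisely the ones appearing in Lemma~\ref{A.1}(1) that are moreover left exact, since every $F_{\mathcal{K}}$ is already right exact.)

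The ``if'' and ``only if'' directions will be handled together. First I would peel off the pushforward: since $\mathcal{X}=\operatorname{Spec}A$ is affine, the projection $p_{2}\colon\mathcal{X}\times_{R}\mathcal{Y}\to\mathcal{Y}$ is affine, being a base change of $\mathcal{X}\to\operatorname{Spec}R$, so $p_{2,*}$ is exact on quasi-coherent sheaves, exactly as in the proof of Lemma~\ref{A.1}. Hence $F_{\mathcal{K}}$ is exact if and only if $\mathcal{F}\mapsto p_{1}^{*}\mathcal{F}\otimes_{\mathcal{O}_{\mathcal{X}\times_{R}\mathcal{Y}}}\mathcal{K}$ is exact on $\operatorname{QCoh}(\mathcal{X})$. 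Next I would test this on a smooth presentation $W=\operatorname{Spec}B\to\mathcal{X}\times_{R}\mathcal{Y}$ with $W$ affine, where $B$ becomes an $A$-algebra through $p_{1}$: the restriction of $p_{1}^{*}\mathcal{F}\otimes\mathcal{K}$ to $W$ is the quasi-coherent sheaf on $W$ associated to the $B$-module $\mathcal{F}\otimes_{A}M$, where $M$ is the $B$-module corresponding to $\mathcal{K}|_{W}$, regarded as an $A$-module via $p_{1}$. Since exactness of a sequence in $\operatorname{QCoh}(\mathcal{X}\times_{R}\mathcal{Y})$ may be checked after restriction to a smooth cover, and every $A$-module underlies an object of $\operatorname{QCoh}(\mathcal{X})$, the functor $\mathcal{F}\mapsto p_{1}^{*}\mathcal{F}\otimes\mathcal{K}$ is exact exactly when $M$ is a flat $A$-module for every such $W$, i.e.\ exactly when $\mathcal{K}$ is flat over $\mathcal{X}$.

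This gives the asserted equivalence, with quasi-inverse $F\mapsto F(\mathcal{O}_{\mathcal{X}})$ reinterpreted as in Lemma~\ref{A.1} (the resulting sheaf being automatically flat over $\mathcal{X}$ once $F$ is exact). I do not expect a genuine obstacle here: the only points requiring care are the functorial identification of $(p_{1}^{*}\mathcal{F}\otimes\mathcal{K})|_{W}$ with the sheaf attached to $\mathcal{F}\otimes_{A}M$, and the observation that flatness over $\mathcal{X}$ is a smooth-local condition on $\mathcal{X}\times_{R}\mathcal{Y}$, both of which are routine.
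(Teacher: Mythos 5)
Your proof is correct and takes essentially the same route as the paper: both reduce exactness of $F_{\mathcal{K}}$ to exactness of $\mathcal{F}\mapsto p_1^{*}\mathcal{F}\otimes\mathcal{K}$ via the observation that $p_{2,*}$ is exact because $p_2$ is affine, and then identify this with flatness of $\mathcal{K}$ over $\mathcal{X}$. The paper records only the ``exact $\Rightarrow$ flat'' direction and is terser; you supply both directions explicitly by checking on a smooth affine cover $W=\operatorname{Spec}B$ of $\mathcal{X}\times_{R}\mathcal{Y}$ and identifying $(p_1^{*}\mathcal{F}\otimes\mathcal{K})|_W$ with the sheaf attached to $\mathcal{F}\otimes_A M$, which is a welcome completion of the argument.
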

\begin{proof}
Let $F\colon\operatorname{QCoh}(\mathcal{X})\to\operatorname{QCoh}(\mathcal{Y})$ be an $R$-linear exact functor that commute with arbitrary direct sums. Lemma \ref{A.1} gives us a quasi-coherent sheaf $\mathcal{K}$ on $\mathcal{O}_{\mathcal{X}\times_{R}\mathcal{Y}}$ such that $F\cong F_{\mathcal{K}}$. We know that $F$ is exact. Since $\mathcal{X}$ is affine, $p_{2,*}$ is exact. It follows that $\mathcal{K}$ must be flat over $\mathcal{X}$.
\end{proof}

\begin{lemma}[cf.\ {\cite[\href{https://stacks.math.columbia.edu/tag/0GPA}{Tag 0GPA}]{stacks-project}}]\label{A.3}
Let $R$ be a ring. Let $\mathcal{X},\mathcal{Y}$ be algebraic stacks over $R$ such that $\mathcal{X}$ is quasi-compact with affine diagonal. Let $F\colon\operatorname{QCoh}(\mathcal{X})\to\operatorname{QCoh}(\mathcal{Y})$ be an $R$-linear right exact functor that commutes with arbitrary direct sums. Then there exists an object $\mathcal{K}$ in $\operatorname{QCoh}(\mathcal{X}\times_{R}\mathcal{Y})$ and a natural transformation $t\colon F\to F_{\mathcal{K}}$ such that $t\colon F\circ f_{*}\to F_{\mathcal{K}}\circ f_{*}$ is an isomorphism for any smooth presentation $f\colon U\to\mathcal{X}$ with $U$ an affine scheme. In particular, if $F$ is exact, it can be arranged that $t$ is a natural isomorphism and $\mathcal{K}$ is flat over $\mathcal{X}$.
\end{lemma}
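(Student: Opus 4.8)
The plan is to follow \cite[\href{https://stacks.math.columbia.edu/tag/0GPA}{Tag 0GPA}]{stacks-project}: reduce to the affine case treated in Lemma \ref{A.1} by choosing a presentation, then glue by faithfully flat descent. Since $\mathcal{X}$ is quasi-compact, pick a smooth presentation $f\colon U\to\mathcal{X}$ with $U$ an affine scheme (take a finite disjoint union of affine charts of a scheme presentation). Because $\Delta_{\mathcal{X}}$ is affine, $f$ is an affine morphism: for any affine $V\to\mathcal{X}$ the fibre product $U\times_{\mathcal{X}}V\to U\times_R V$ is a base change of $\Delta_{\mathcal{X}}$, hence affine, while $U\times_R V$ is affine. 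Consequently $f_*$ is exact and commutes with arbitrary direct sums, so $F\circ f_*\colon\operatorname{QCoh}(U)\to\operatorname{QCoh}(\mathcal{Y})$ is $R$-linear, right exact, and commutes with direct sums, and Lemma \ref{A.1} furnishes $\mathcal{K}_0\in\operatorname{QCoh}(U\times_R\mathcal{Y})$ with an isomorphism $F\circ f_*\cong F_{\mathcal{K}_0}$.

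Next I would promote $\mathcal{K}_0$ to a quasi-coherent sheaf on $\mathcal{X}\times_R\mathcal{Y}$. Record first the base-change identity $F_{\mathcal{L}}\circ g_*\cong F_{(g\times\operatorname{id}_{\mathcal{Y}})^*\mathcal{L}}$, valid for any flat quasi-compact quasi-separated morphism $g$ over $\mathcal{X}$; it follows from flat base change and the projection formula. Form the \v{C}ech nerve $U_\bullet$ of $f$; since $\Delta_{\mathcal{X}}$ is affine, each $U_p$ is affine and each face map $d_i$ is affine and flat. The two face maps $d_0,d_1\colon U_1\to U$ satisfy $f\circ d_0\cong f\circ d_1$, so $F\circ f_*\cong F_{\mathcal{K}_0}$ together with the base-change identity gives two isomorphisms $F_{d_0^*\mathcal{K}_0}\cong F\circ f_{1,*}\cong F_{d_1^*\mathcal{K}_0}$; the faithful part of the equivalence in Lemma \ref{A.1} turns their composite into a canonical isomorphism $\varphi\colon d_0^*\mathcal{K}_0\xrightarrow{\ \sim\ }d_1^*\mathcal{K}_0$ over $U_1\times_R\mathcal{Y}$. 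Comparing the three identifications of $F\circ f_{2,*}$ in the same way, again via faithfulness in Lemma \ref{A.1}, gives the cocycle condition on $U_2\times_R\mathcal{Y}$. Faithfully flat descent then produces $\mathcal{K}\in\operatorname{QCoh}(\mathcal{X}\times_R\mathcal{Y})$ with $(f\times\operatorname{id}_{\mathcal{Y}})^*\mathcal{K}\cong\mathcal{K}_0$, and combining this with the base-change identity yields a natural isomorphism $F\circ f_*\cong F_{\mathcal{K}}\circ f_*$.

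Finally I would build the natural transformation $t\colon F\to F_{\mathcal{K}}$. As $f$ is affine and faithfully flat, the \v{C}ech (Amitsur) sequence
\begin{equation*}
0\longrightarrow\mathcal{F}\longrightarrow f_*f^*\mathcal{F}\longrightarrow f_{1,*}f_1^*\mathcal{F}
\end{equation*}
is exact for every $\mathcal{F}\in\operatorname{QCoh}(\mathcal{X})$, and the isomorphism $F\circ f_*\cong F_{\mathcal{K}}\circ f_*$ induces compatible isomorphisms $F\circ f_{p,*}\cong F_{\mathcal{K}}\circ f_{p,*}$. Feeding this sequence through $F$ and through $F_{\mathcal{K}}$ and matching the two right-hand terms via these isomorphisms produces the map $t_{\mathcal{F}}$, and by construction $t$ becomes an isomorphism after precomposition with $f_*$. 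For the last assertion: if $F$ is exact, then $\mathcal{K}_0$ is flat over $U$ by Corollary \ref{A.2}; flatness descends along $f\times\operatorname{id}_{\mathcal{Y}}$, so $\mathcal{K}$ is flat over $\mathcal{X}$ and hence $F_{\mathcal{K}}$ is left exact. Now $F$ and $F_{\mathcal{K}}$ are both left exact, so applying them to the exact three-term sequence above and using that $t$ is an isomorphism on $f_*f^*\mathcal{F}$ and $f_{1,*}f_1^*\mathcal{F}$ forces $t_{\mathcal{F}}$ to be an isomorphism by the five lemma.

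The step I expect to be the main obstacle is the descent datum: one must verify that the functorial identifications attached to the face maps really are transported by the fully faithful part of Lemma \ref{A.1} to honest isomorphisms of the sheaves $d_i^*\mathcal{K}_0$ satisfying the strict cocycle identity, which is exactly where the affine-diagonal hypothesis enters, since it keeps the whole \v{C}ech nerve inside the affine situation of Lemma \ref{A.1}. A secondary difficulty is the construction of $t$ in the general (merely right exact) case, where $F_{\mathcal{K}}$ need not be left exact, so the clean five-lemma comparison only becomes available once $F$ is assumed exact.
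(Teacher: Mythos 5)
Your proof is correct and follows essentially the same strategy as the paper's: reduce to the affine case of Lemma~\ref{A.1} via an affine smooth presentation $f\colon U\to\mathcal{X}$ (possible because $\Delta_{\mathcal{X}}$ is affine), descend $\mathcal{K}_0$ to $\mathcal{X}\times_R\mathcal{Y}$, and construct $t_{\mathcal{F}}$ by comparing the Amitsur/\v{C}ech complex of $\mathcal{F}$ under $F$ with the relative \v{C}ech complex computing $F_{\mathcal{K}}(\mathcal{F})$. The only difference is cosmetic: the paper phrases the descent step through the smooth groupoid presentation $[U/\mathcal{R}]\cong\mathcal{X}\times_R\mathcal{Y}$ and \cite[\href{https://stacks.math.columbia.edu/tag/06WT}{Tag 06WT}]{stacks-project}, while you use the \v{C}ech nerve and fppf descent directly; these are equivalent formulations of the same gluing argument.
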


\begin{proof}
Let $f\colon U\to\mathcal{X}$ be a smooth presentation of $\mathcal{X}$ where $U$ is an affine scheme. Set $\mathcal{R}=U\times_{\mathcal{X}}U$. Since $\mathcal{X}$ has affine diagonal, $R$ is also an affine scheme. Then we have the following cartesian diagram
\begin{equation*}
    \begin{tikzcd}
        \mathcal{R}\arrow[r,"s"]\arrow[d,"t"] & U\arrow[d,"f"]\\
        U\arrow[r,"f"] & \mathcal{X}
    \end{tikzcd}
\end{equation*}
Note that the maps in the diagram are all smooth and affine. We see that the quintuple$$G=(U\times_{R}\mathcal{Y},\mathcal{R}\times_{R}\mathcal{Y},s\times_{R}id_{\mathcal{Y}},t\times_{R}id_{\mathcal{Y}},c\times_{R}id_{\mathcal{Y}}),$$where $c\colon R\times_{s,\times,t}R\to R$ is the canonical projection, forms a smooth groupoid in affine schemes over $\mathcal{Y}$ such that the quotient stack $[U/R]$ is isomorphic to $\mathcal{X}\times_{R}\mathcal{Y}$.

Define $F^\prime=F\circ f_{*}$. It is clear that $F^\prime$ is an $R$-linear functor that commutes with arbitrary direct sums. Since $f$ is affine, the composition $F^\prime$ is right exact. By Lemma \ref{A.1}, there exists a quasi-coherent sheaf $\mathcal{K}^\prime$ on $U\times_{R}\mathcal{Y}$ such that $F^\prime\cong F_{\mathcal{K}^\prime}$. Consider the functors $F^\prime\circ s_{*}$ and $F^\prime\circ t_{*}$. It follows from \cite[\href{https://stacks.math.columbia.edu/tag/0FZF}{Tag 0FZF}]{stacks-project} that these two functors correspond to the quasi-coherent sheave $(s\times_{R}id_{\mathcal{Y}})^{*}\mathcal{K}^\prime$ and $(t\times_{R}id_{\mathcal{Y}})^{*}\mathcal{K}^\prime$, respectively. By \cite[\href{https://stacks.math.columbia.edu/tag/0GPA}{Tag 0GPA}]{stacks-project}, we have$$F^\prime\circ s_{*}\cong F_{\mathcal{K}^\prime}\cong F^\prime\circ t_{*}.$$By Lemma \ref{A.1}, there exists an isomorphism$$\alpha\colon(s\times_{R}id_{\mathcal{Y}})^{*}\mathcal{K}^\prime\to(t\times_{R}id_{\mathcal{Y}})^{*}\mathcal{K}^\prime.$$This defines a quasi-coherent sheaf $(\mathcal{K}^\prime,\alpha)$ on the groupoid $G$. By \cite[\href{https://stacks.math.columbia.edu/tag/06WT}{Tag 06WT}]{stacks-project}, there exists a unique quasi-coherent sheaf $\mathcal{K}$ on $\mathcal{X}\times_{R}\mathcal{Y}$ such that $(f\times_{R}id_{\mathcal{Y}})^{*}\mathcal{K}\cong\mathcal{K}^\prime$. 

Now we construct the natural transformation $t$. Let $(U/\mathcal{X})^{d}$ be the fiber product of $U$ with itself over $\mathcal{X}$ with $d+1$ factors. Let $f^{d}$ be the structure morphism onto $\mathcal{X}$ and let $F^{d}=f^{d}\times_{R}id_{\mathcal{Y}}$. In the case where $d=0$, we set $f^{0}=f$. Since $\mathcal{X}$ has affine diagonal, the product $(U/\mathcal{X})^{d}$ is an affine scheme, and the map $f^{d}$ is an affine morphism for all $d$. Let $\mathcal{K}^{d}$ be the quasi-coherent sheaf on $((U/\mathcal{X})^{d})\times_{R}\mathcal{Y}$ corresponding to $F\circ f_{*}^{d}$. By Lemma \ref{A.1}, we see that $\mathcal{K}^{d}\cong F^{d,*}\mathcal{K}$, where $\mathcal{K}$ is the quasi-coherent sheaf on $\mathcal{X}\times_{R}\mathcal{Y}$ obtained in the last paragraph. Let $\mathcal{F}$ be a quasi-coherent sheaf on $\mathcal{X}$. Set $\mathcal{F}^{d}=f^{d,*}\mathcal{F}$. Observe that$$F(f_{*}^{d}\mathcal{F}^{d})\cong p_{2,*}^{d}F^{d,*}(p_{1}^{*}\mathcal{F}\otimes\mathcal{K})$$where $p_{2}^{d}$ is the canonical projection $((U/\mathcal{X})^{d})\times_{R}\mathcal{Y}\to\mathcal{Y}$. Then we obtain the following short exact sequence$$0\longrightarrow\mathcal{F}\longrightarrow f_{0,*}\mathcal{F}^{0}\longrightarrow f_{1,*}\mathcal{F}^{1}$$by truncating the Cech complex. Since $F$ is right exact, we have the following exact sequence$$F(\mathcal{F})\longrightarrow F(f_{0,*}\mathcal{F}^{0})\longrightarrow F(f_{1,*}\mathcal{F}^{1}).$$On the other hand, $p_{1}^{*}\mathcal{F}\otimes\mathcal{K}$ is a quasi-coherent sheaf on $\mathcal{X}\times_{R}\mathcal{Y}$. We also have the following short exact sequence$$0\longrightarrow F_\mathcal{K}(\mathcal{F})\longrightarrow p_{2,*}^{0}F^{0,*}(p_{1}^{*}\mathcal{F}\otimes\mathcal{K})\longrightarrow p_{2,*}^{1}F^{1,*}(p_{1}^{*}\mathcal{F}\otimes\mathcal{K})\longrightarrow\cdots$$by truncating the relative Cech complex along $p_{2}$. This gives us the following diagram
\begin{equation*}
    \begin{tikzcd}
       & F(\mathcal{F})\arrow[r]\arrow[d,dashed] & F(f_{*}^{0}\mathcal{F}^{0})\arrow[r]\arrow[d] & F(f_{*}^{1}\mathcal{F}^{1})\arrow[d]\\
        0\arrow[r] & F_\mathcal{K}(\mathcal{F})\arrow[r] & p_{2,*}^{0}F^{0,*}(p_{1}^{*}\mathcal{F}\otimes\mathcal{K}) \arrow[r] & p_{2,*}^{1}F^{1,*}(p_{1}^{*}\mathcal{F}\otimes\mathcal{K}).
    \end{tikzcd}
\end{equation*}
We have shown that the vertical solid arrows are isomorphisms. It follows that the dashed arrow can be filled in by a map $t_{\mathcal{F}}$. It is clear that the formation of $t_{\mathcal{F}}$ is functorial on $\mathcal{F}$. This gives rise to the desired natural transformation $t$. In the special case where $F$ is exact, the map $F(\mathcal{F})\to F(f_{*}^{0}\mathcal{F}^{0})$ would be injective as well. Therefore the map $t_{\mathcal{F}}$ must be an isomorphism for all $\mathcal{F}$. Hence, $t$ is an isomorphism. The flatness of $\mathcal{K}$ follows from Corollary \ref{A.2}.\qedhere

\end{proof}

We conclude this section with the following variant of Theorem \ref{1.2} for quasi-coherent sheaves.

\begin{proposition}[cf.\ {\cite[\href{https://stacks.math.columbia.edu/tag/0FZH}{Tag 0FZH}]{stacks-project}}]\label{A.4}
Let $R$ be a ring. Let $\mathcal{X},\mathcal{Y}$ be algebraic stacks over $R$. If $\mathcal{X}$ is quasi-compact with affine diagonal, then there exists an equivalence of categories between
\begin{enumerate}[topsep=0pt,noitemsep,label=\normalfont(\arabic*)]
    \item\label{3.3(1)} the category of $R$-linear exact functors $F\colon\operatorname{QCoh}(\mathcal{X})\to\operatorname{QCoh}(\mathcal{Y})$ that commute with arbitrary direct sum, and
    \item\label{3.3(2)} the full subcategory of $\operatorname{QCoh}(\mathcal{X}\times_{R}\mathcal{Y})$ consisting of $\mathcal{K}$ such that
    \begin{enumerate}[topsep=0pt,noitemsep,label=\normalfont(\alph*)]
        \item $\mathcal{K}$ is flat over $\mathcal{X}$, and
        \item for all quasi-coherent sheaf $\mathcal{F}$ on $\mathcal{X}$ we have $R^{i}p_{2,*}(p_{1}^{*}\mathcal{F}\otimes_{\mathcal{O}_{\mathcal{X}\times_{R}\mathcal{Y}}}\mathcal{K})=0$ for all $i>0$
    \end{enumerate}
\end{enumerate}
sending a quasi-coherent sheaf $\mathcal{K}$ to the functor $F_{\mathcal{K}}$.
\end{proposition}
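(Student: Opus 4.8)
The plan is to exhibit mutually quasi-inverse functors between \ref{3.3(1)} and \ref{3.3(2)}, using Lemma~\ref{A.1} and Lemma~\ref{A.3} as black boxes. In one direction I would send $\mathcal{K}$ to $F_{\mathcal{K}}$; as noted at the start of this appendix $F_{\mathcal{K}}$ is $R$-linear and commutes with arbitrary direct sums, so only exactness needs checking. Flatness of $\mathcal{K}$ over $\mathcal{X}$ (condition (a)) makes $\mathcal{F}\mapsto p_{1}^{*}\mathcal{F}\otimes_{\mathcal{O}_{\mathcal{X}\times_{R}\mathcal{Y}}}\mathcal{K}$ exact, since locally on $\mathcal{X}\times_{R}\mathcal{Y}$ it is $\mathcal{F}\mapsto\mathcal{F}\otimes_{A}\mathcal{K}$ with $\mathcal{K}$ a flat $A$-module; then the vanishing in condition (b), fed into the long exact sequence of the $R^{i}p_{2,*}$, upgrades the left-exact $p_{2,*}$ to an exact functor on sheaves of the form $p_{1}^{*}(-)\otimes\mathcal{K}$. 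Hence $F_{\mathcal{K}}$ lies in \ref{3.3(1)}. In the other direction, given $F$ in \ref{3.3(1)}, I would apply Lemma~\ref{A.3}: since $F$ is exact this produces $\mathcal{K}\in\operatorname{QCoh}(\mathcal{X}\times_{R}\mathcal{Y})$, flat over $\mathcal{X}$, together with a natural isomorphism $F\xrightarrow{\sim}F_{\mathcal{K}}$.

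The real work is to check that this $\mathcal{K}$ satisfies condition (b). Since $\mathcal{X}$ is quasi-compact with affine diagonal, choose a smooth surjective presentation $f\colon U\to\mathcal{X}$ with $U$ affine; then $f$ is an affine morphism, because it factors as the graph $\Gamma_{f}\colon U\to U\times_{R}\mathcal{X}$ (a base change of the affine diagonal $\Delta_{\mathcal{X}/R}$) followed by $U\times_{R}\mathcal{X}\to\mathcal{X}$ (a base change of the affine morphism $U\to\operatorname{Spec}R$). Write $g=f\times\operatorname{id}_{\mathcal{Y}}$, which is affine, and let $q_{1}\colon U\times_{R}\mathcal{Y}\to U$ and $\tilde p_{2}\colon U\times_{R}\mathcal{Y}\to\mathcal{Y}$ be the projections; note $\tilde p_{2}$ is a base change of $U\to\operatorname{Spec}R$, hence affine. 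For any $\mathcal{N}\in\operatorname{QCoh}(U)$, flat base change along the smooth affine morphism $f$ and the projection formula for the affine morphism $g$ give
\begin{equation*}
    Rp_{2,*}\big(p_{1}^{*}(f_{*}\mathcal{N})\otimes\mathcal{K}\big)\;\cong\;Rp_{2,*}g_{*}\big(q_{1}^{*}\mathcal{N}\otimes g^{*}\mathcal{K}\big)\;\cong\;R\tilde p_{2,*}\big(q_{1}^{*}\mathcal{N}\otimes g^{*}\mathcal{K}\big),
\end{equation*}
which is concentrated in degree $0$ because $\tilde p_{2}$ is affine. In particular $f_{*}f^{*}\mathcal{F}$ is acyclic for the functor $\mathcal{G}\mapsto p_{2,*}(p_{1}^{*}\mathcal{G}\otimes\mathcal{K})$, for every $\mathcal{F}$.

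To finish condition (b) I would dimension-shift. Fix $\mathcal{F}\in\operatorname{QCoh}(\mathcal{X})$; since $f$ is faithfully flat the unit $\mathcal{F}\to f_{*}f^{*}\mathcal{F}$ is injective, with cokernel $\mathcal{Q}$, and tensoring $p_{1}^{*}$ of $0\to\mathcal{F}\to f_{*}f^{*}\mathcal{F}\to\mathcal{Q}\to0$ with $\mathcal{K}$ stays exact by flatness. The long exact sequence for $R^{\bullet}p_{2,*}$, the acyclicity of $f_{*}f^{*}\mathcal{F}$, and the surjectivity of $F_{\mathcal{K}}(f_{*}f^{*}\mathcal{F})\to F_{\mathcal{K}}(\mathcal{Q})$ (which holds because $F_{\mathcal{K}}\cong F$ is exact) force $R^{1}p_{2,*}(p_{1}^{*}\mathcal{F}\otimes\mathcal{K})=0$. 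Applying this to $\mathcal{Q}$ in place of $\mathcal{F}$ and using the isomorphisms $R^{i}p_{2,*}(p_{1}^{*}\mathcal{Q}\otimes\mathcal{K})\cong R^{i+1}p_{2,*}(p_{1}^{*}\mathcal{F}\otimes\mathcal{K})$ for $i\ge1$ coming from the same long exact sequence, induction on $i$ gives $R^{i}p_{2,*}(p_{1}^{*}\mathcal{F}\otimes\mathcal{K})=0$ for all $i\ge1$, i.e.\ condition (b).

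It remains to see the two assignments are quasi-inverse. The composite $F\mapsto\mathcal{K}\mapsto F_{\mathcal{K}}$ is naturally isomorphic to the identity by the natural isomorphism $F\xrightarrow{\sim}F_{\mathcal{K}}$ of Lemma~\ref{A.3}. For $\mathcal{K}\mapsto F_{\mathcal{K}}\mapsto\mathcal{K}'$, tracing through Lemma~\ref{A.3} shows $\mathcal{K}'$ is the unique descent to $\mathcal{X}\times_{R}\mathcal{Y}$ of the kernel on $U\times_{R}\mathcal{Y}$ representing $F_{\mathcal{K}}\circ f_{*}$; by the compatibility of the kernel construction with pushforward used in the proof of Lemma~\ref{A.3} this kernel is $g^{*}\mathcal{K}$, so uniqueness of fppf descent yields $\mathcal{K}'\cong\mathcal{K}$, compatibly with the comparison isomorphisms. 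Functoriality of everything is immediate from Lemmas~\ref{A.1} and~\ref{A.3}. The main obstacle is the verification of condition (b): one has to produce an affine presentation so that the affine diagonal can be used, propagate through flat base change and the projection formula until higher direct images along $p_{2}$ become higher direct images along the affine morphism $\tilde p_{2}$, and then bootstrap from $R^{1}$ to all $R^{i}$ via exactness of $F$. The rest is formal.
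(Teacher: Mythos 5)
Your proposal is correct, and the underlying idea is the same as the paper's: establish condition (b) by resolving $\mathcal{F}$ using pushforwards from an affine cover $f\colon U\to\mathcal{X}$ (affine because of the affine diagonal), where the higher direct images of $p_1^*(-)\otimes\mathcal{K}$ along $p_2$ vanish by flat base change and the projection formula, and then exploit exactness of $F\cong F_{\mathcal K}$. The paper executes this all at once by comparing the full Cech complex of $F$ applied to the resolution $\mathcal{F}\to f_{0,*}\mathcal{F}^0\to f_{1,*}\mathcal{F}^1\to\cdots$ with the relative Cech complex computing $Rp_{2,*}(p_1^*\mathcal{F}\otimes\mathcal{K})$, using the termwise isomorphisms supplied by Lemma~\ref{A.3}. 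You instead extract only the first step $0\to\mathcal{F}\to f_*f^*\mathcal{F}\to\mathcal{Q}\to 0$ and bootstrap: acyclicity of $f_*f^*\mathcal{F}$ plus surjectivity of $F_{\mathcal K}(f_*f^*\mathcal{F})\to F_{\mathcal K}(\mathcal{Q})$ kills $R^1$, and the degree-shift isomorphisms $R^{i}p_{2,*}(p_1^*\mathcal{Q}\otimes\mathcal{K})\cong R^{i+1}p_{2,*}(p_1^*\mathcal{F}\otimes\mathcal{K})$ for $i\ge 1$ let you iterate. This buys you a somewhat more self-contained argument (you do not need to reopen the Cech diagram from Lemma~\ref{A.3}'s proof), at the cost of proving the acyclicity of pushforwards from scratch via base change and the projection formula; it is a matter of packaging rather than substance. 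Your explicit check that the two constructions are mutually quasi-inverse (in particular that $\mathcal{K}'\cong\mathcal{K}$ via $g^*\mathcal{K}$ and descent) is slightly more than the paper writes out, and is sound.
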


\begin{proof}
Let $\mathcal{K}$ be a quasi-coherent sheaf on $\mathcal{X}\times_{R}\mathcal{Y}$ as in \ref{3.3(2)}. It is clear that $F_{\mathcal{K}}$ is exact. It suffices to find a quasi-inverse to this construction. Let $F$ be an exact functor as in \ref{3.3(1)}. By Lemma \ref{A.3}, there exists a quasi-coherent sheaf $\mathcal{K}$ on $\mathcal{X}\times_{R}\mathcal{Y}$ that is flat over $\mathcal{X}$ with a natural isomorphism $t\colon F\to F_{\mathcal{K}}$. It suffices to show the vanishing of the higher direct images. We use the notations introduced in the proof of Lemma \ref{A.3}. Since $F$ is exact, we have a quasi-ismorphism 
\begin{equation*}
    \begin{tikzcd}
        0 \arrow[r] & F(\mathcal{F})\arrow[r]\arrow[d,"t_{\mathcal{F}}"] & F(f_{0,*}\mathcal{F}^{0})\arrow[r]\arrow[d] & F(f_{1,*}\mathcal{F}^{1})\arrow[r]\arrow[d]&\cdots\\
        0 \arrow[r] & F_\mathcal{K}(\mathcal{F})\arrow[r] & p_{2,*}^{0}F^{0,*}(p_{1}^{*}\mathcal{F}\otimes\mathcal{K})\arrow[r] & p_{2,*}^{1}F^{1,*}(p_{1}^{*}\mathcal{F}\otimes\mathcal{K})\arrow[r]&\cdots
    \end{tikzcd}
\end{equation*}
for any quasi-coherent sheaf $\mathcal{F}$ on $\mathcal{X}$. Note that $Rp_{2,*}(p_{1}^{*}\mathcal{F}\otimes\mathcal{K})$ is precisely given by the cohomology of the bottom row. But the top row is exact everywhere except at $F(\mathcal{F})$. It follows that $R^{i}p_{2,*}(p_{1}^{*}\mathcal{F}\otimes\mathcal{K})=0$ for all $i>0$. This finishes the proof.
\end{proof}

\bibliographystyle{amsalpha}
\bibliography{orlovbib.bib}

\end{document}